\documentclass{article}
\usepackage[left=1.5in,textwidth=30pc,textheight=54.5pc]{geometry}
\usepackage{amsfonts, amsmath, amssymb, amsthm, mathtools, relsize}
\usepackage{bm}
\usepackage{hyperref}
\usepackage{graphicx, color}
\usepackage[normalem]{ulem} 
\usepackage{fullpage}
\usepackage{cite}
\usepackage{enumerate}
\usepackage{graphicx}
\usepackage{xcolor}
\usepackage{tikz, tkz-graph} 
\usetikzlibrary{arrows}
\usepackage{authblk}
\usepackage{comment}

\usepackage[linewidth=1pt]{mdframed}

\usepackage[textwidth=1.4in]{todonotes}

\reversemarginpar
\usepackage{caption}

\definecolor{nb}{rgb}{.6, .176, 1}
\definecolor{sienna}{rgb}{1, 0, 0}
\definecolor{darkgreen}{rgb}{0, .5, 0}

\numberwithin{equation}{section}
\allowdisplaybreaks

\newtheorem{theorem}{Theorem}[section]
\newtheorem{proposition}[theorem]{Proposition}
\newtheorem{corollary}[theorem]{Corollary}

\newtheorem{lemma}[theorem]{Lemma}
\newtheorem{example}[theorem]{Example}

\newtheorem{remark}{Remark}
\newtheorem{defi}[theorem]{Definition}

\newtheorem{problem}[theorem]{Problem}

\newcommand{\GG}{\mbox{${\mathcal G}$}}
\newcommand{\FF}{\mbox{${\mathcal F}$}}

\newcommand{\OO}{\mbox{${\mathcal O}$}}

\newcommand{\DD}{\mbox{${\mathcal D}$}}
\newcommand{\sDD}{\tiny{\mbox{${\mathcal D}$}}}

\newcommand{\Rbold}{\mbox{${\mathbb R}$}}

\newcommand{\Z}{\mbox{${\mathbb Z}$}}

\newcommand{\bN}{\mbox{${\mathbb N}$}}
\newcommand{\I}{\mbox{${\mathbb I}$}}

\usepackage{amsfonts}

 \newcommand{\R}{\mathbb{R}}
 \makeatletter
\def\namedlabel#1#2{\begingroup  
    (#2)%
    \def\@currentlabel{#2}%
    \phantomsection\label{#1}\endgroup
}

\usepackage{cancel}

\usepackage{ulem}

\newcommand{\bS}{\mathbf{S}}
\newcommand{\bM}{\mathbf{M}}
\newcommand{\br}{\mathbf{r}}
\newcommand{\bs}{\mathbf{s}}

\newcommand{\bone}{\mathbf{1}}
\newcommand{\PP}{\mbox{${\mathbb P}$}}

\newcommand{\E}{\mbox{${\mathbb E}$}}

\title{Tampered Memory Elephant Random Walk on one-dimensional integer lattice}
\author{Vinita Mulay\footnote{{University of Duisburg-Essen}}~\footnote{\href{mailto:vinitamulay5@gmail.com}{vinita.mulay@uni-due.de}}
\and 
Neeraja Sahasrabudhe\footnote{{Indian Institute of Science Education and  Research, Mohali}}~\footnote{\href{mailto:neeraja@iisermohali.ac.in}{neeraja@iisermohali.ac.in}}
\and Debleena Thacker\footnote{{Durham University}}~\footnote{\href{mailto:debleena.thacker@durham.ac.uk}{debleena.thacker@durham.ac.uk}}}

\date{\today}

\begin{document}
\maketitle
\begin{abstract} 
One of the outstanding questions in the theory of elephant random walks as observed in\cite{gut2023elephant}, is to determine how much memory is needed for a phase transition between the diffusive, critical and superdiffusive regimes to persist. In an attempt to understand this memory breakpoint, we introduce the tampered memory elephant random walk, in which the memory is partitioned into two disjoint sets $D_n$ and $D_n^c$, which may be deterministic or random. On $D_n^c$ the dynamics is the same as an elephant random walk, while on $D_n$ the increments are replaced by independent innovations. The resulting walk is thus driven by two competing components: a memory-dependent elephant random walk and an independent classical simple random walk corresponding to the innovations. To the best of our knowledge, our model is novel and is reminiscent of the unresolved linear setting considered in \cite{Ro_Ta_Ta_2025}. We first establish a law of large numbers when the increasing collections $\{D_n\}_{n \ge 1}$ and $\{D^c_n\}_{n \ge 1}$ have a renewal structure with exponential moments. We then identify a sharp threshold that governs the persistence of the phase transition for deterministic memory partitions. More precisely, we show that if $\{D_n\}_{n \ge 1}$ is non-random and $\{D_n\}_{n \ge 1}$, and $\{D^c_n\}_{n \ge 1}$ are both increasing sequences with $\lim_{n \to \infty} \frac{\lvert D^c_n\rvert}{n} >1/2$, then a phase transition into diffusive, critical and superdiffusive regimes is still observable, whereas for $\lim_{n \to \infty} \frac{\lvert D^c_n \rvert}{n}<1/2$, there is only the diffusive regime with a scaling of $\OO(\sqrt{n})$. The case of $\lim_{n\to \infty}\frac{\lvert D^c_n \rvert}{n}=1/2$ is also completely characterised. Thus, one-half emerges as the sharp breakpoint for the persistence of anomalous diffusion in this competitive setting. We conjecture that the same threshold governs the case when $\{D_n\}_{n \ge 1}$ is random. Our proofs rely on stochastic approximation applied to the two dependent competing components of the walk, corresponding to the retained memory and the innovations. 

\end{abstract}
\section{Introduction and main results}
\label{Sec:Intro}
The Elephant Random Walk (ERW), or as we will sometimes refer to it as the \textit{classical} elephant random walk, was introduced in \cite{Sc_Tr_2004} to understand and investigate the impact of long-range memory on random walks. The ERW differs from the random walks in the sense that the steps or increments are not independent, but chosen from the past uniformly at random, and then it is either repeated or flipped with some probability independent of everything else. More formally: 
let $\left(\tilde{S}_n\right)_{n \ge 0}$ denote a classical elephant random walk (ERW) on the one-dimensional integer lattice, with $\tilde{S}_0=0$, and $\tilde{S}_n$ is defined recursively (see \cite{Ba_Be_2016,Be_2018}) as follows
\begin{equation}
\label{Def: ERW}
\tilde{S}_{n+1}=\tilde{S}_n + \tilde{X}_{n+1},
\end{equation}
where $\tilde{X}_{n}$ takes values in $\{ \pm 1\}$, such that, $P(\tilde{X}_1 = 1)=q \in [0, 1]$. The law of $\tilde{X}_n$ is determined by the following mechanism  for $0 \le p \le 1$, 
\begin{equation}
\label{Eq:Choice_of_step-sizes}
\tilde{X}_{n+1}= \begin{cases}
           \tilde{X}_{I_{n+1}}, \, & \text{ with probability } \ p, \\
           -\tilde{X}_{I_{n+1}}, \, & \text{ with probability } 1-p,
\end{cases}
\end{equation}
where $I_{n+1}$ is random variable taking values chosen uniformly from $\{1,2,\ldots, n\}$. A similar definition for ERW on $\Z^d$, for $d \ge 2$, can be found in \cite{Be_La_2019a}. The model that we will be interested in this paper will differ from the classical ERW in the sense that, in our model, we will tamper with a portion of the memory. 

It is well known from \cite{Sc_Tr_2004} that the ERW demonstrates anomalous diffusion. The mathematically rigorous results in ERW first appeared in \cite{Be_2018, Ba_Be_2016}. In \cite{Ba_Be_2016} the authors prove a coupling of the ERW with generalised Friedman urn(see\cite{Fre_65, Ja_2004} for urn models) and established a functional central limit theorem, while \cite{Be_2018} established, using martingale methods, that the ERW exhibits a phase transition into diffusive, critical and superdiffusive regimes with scalings $\OO(\sqrt{n})$, $\OO(\sqrt{n \log n})$ and $\OO(n^{2p-1})$, respectively for $0 \le p < 3/4$, $p=\frac{3}{4}$ and $3/4 < p \le 1$. Similar results are also found in \cite{coletti2017central}. Furthermore, it is also shown in \cite{Be_2018} that the limiting distribution in the superdiffusive case is non-Gaussian (see also \cite{Kubota2019Gaussian} for Gaussian fluctuations around this limiting distribution, and \cite{Gu_La_Ra_Si_2025, Gu_La_Ra_2026} for more details on the limiting distribution in the superdiffusive case). The corresponding phase transition results for the multidimensional version are shown in \cite{Be_La_2019a}, and it is known that the critical value is given by $p_d = \frac{2d+1}{4d}$ in $\Z^d$. Other interesting properties, such as the centre of mass (see \cite{Be_2018}) and the number of zeroes, have also been studied \cite{Ber_2022}. In \cite{Ber_2022}, the author proved that on $\Z$ the ERW is positive recurrent iff $p < \frac{1}{4}$, and conjectured that the ERW is transient in dimensions $3$ and above, which was settled in affirmation in \cite{Qin_2025}. Furthermore, in \cite{Qin_2025} it is also known that on $\Z^d$, $d=1,2$, the ERW exhibits a phase transition between recurrence and transience at the critical $p_d = \frac{2d+1}{4d}$, while for $d=1$, it was independently shown by \cite{Gu_La_Ra_2026}. Other interesting variants include ERW with amnesia in \cite{La_2022}, ERW with multiple elephants having graph-based interactions \cite{Da_2024}, and \cite{Ma_Ro_Sa_2025} with multiple memory channels, and \cite{Po_Ro_2026} where the steps are sampled according to some function $f$. A lazy version of ERW has been studied in \cite{bercu2022_lazy}. Another interesting model is that of \cite{gonzalez2021reinforced}, where an independent Bernoulli random variable determines whether the next increment will be determined by a past step sampled from memory or will be according to an independent random variable.

A variant of the classical ERW model is when the elephant has only partial memory, that is, when $I_{n+1}$ is chosen uniformly from a subset $\mathcal{M}_n$ of $\{1,2,\ldots, n\}$, see (\cite{Gu_St_2021, Gu_St_2022, gut2023elephant}). For an overview of ERW with partial memory and open problems, see \cite{gut2023elephant}. The partial memory models have also been studied numerically in \cite{Cr_da_Si_2007, daSilva_Cr_Vi_2005}.
In \cite{Gu_St_2021}, the authors observed that for limited memory, that is, when the elephant remembers only the distant $K=1,2$ steps, equivalently, $\mathcal{M}_n = \{1, 2, \ldots, K\}$, the ERW has diffusive regime only with a scaling of $\OO(\sqrt{n})$. \cite{Gu_St_2021} also considers the case for most recent memory, that is, when $\mathcal{M}_n= \{n \}$ or $\mathcal{M}_n= \{n-1, n\}$, and shows that there is no superdiffusive regime in this case as well. Similar results for the general $K$ finite memory case can be found in \cite{ben2021finite}. Since \cite{Gu_St_2021,ben2021finite} demonstrate that there is no phase transition in the finite memory cases, while it is known that there is a phase transition into superdiffusive regime when we sample from the entire memory, a natural open question (also discussed in \cite{Gu_St_2021}) is what is the breakpoint in memory that is needed to observe the phase transition into all three regimes. In \cite{Gu_St_2022}, another variant of partial memory is studied.
More recently, \cite{Ro_Ta_Ta_2025} further study the model in \cite{Gu_St_2022}, and consider two settings for increasing memory, namely the \textit{triangular array} and the \textit{linear} setting. The main results, including the phase transition into the superdiffusive regime obtained in \cite{Ro_Ta_Ta_2025}, are for the triangular array setting, and there are no results available for the linear setting, which is when $\mathcal{M}_n = \{1,2,\ldots, m_n\}$, where $ 1 \le m_n \le n$.

The model discussed in this paper (see Subsection \ref{SubSec: TMERW} for more details) with tampered memory is somewhat similar to the unresolved linear setting discussed in \cite{Ro_Ta_Ta_2025}. In our model, while still sampling from the entire memory, we tamper with a portion of memory where the increments are no longer as in the classical ERW, but are according to an innovation. We prove a law of large numbers and under certain assumptions, we obtain the \textit{breakpoint} in memory that ensures a phase transition into diffusive, critical and superdiffusive regimes (see Theorems \ref{Thm: SLLN_non-nested}, \ref{Thm: SLLN_renewal_chain}, \ref{Thm: CLT_D_n/n_goes_to_zero} and \ref{Thm: anamolous_diffusion_Dn_non_trivial}) for this memory tampered version of the ERW.

\subsection
{Our Model: Tampered Memory Elephant Random Walk (TMERW)}
\label{SubSec: TMERW}
We study a tampered memory elephant random walk (TMERW), wherein we model a random walk that behaves like a classical elephant random walk (ERW) if the memory point is chosen from a given subset of the past, but behaves differently when it is chosen from its complement. 

In preparation for the definition of TMERW, consider a sequence of sets $\mathcal{D} = \{ D_n \}_{n \geq 1}$, where $D_n \subset \{1, 2, \dots, n\}$. On the set $D_n$ the elephant has no memory of the past, and hence if the chosen past step is from $D_n$, then the increment is just an innovation. We denote these innovations by the sequence $\{Y_n\}_{n \ge 2}$ and assume that they are independent of everything else. More formally, we define  the following parameters
\begin{itemize}
\item $\mathcal{D} = \{ D_n \}_{n \geq 1}$, where $D_n \subset \{1, 2, \dots, n\}$ (which may be deterministic or random) such that, $D_n \cup D_n^c = \{1,2, \ldots, n\}$;
\item $\{Y_{n+1}\}_{n \ge 1}$ is a sequence of i.i.d.\,random variables, where $\PP\left(Y_n=1\right)=1-\PP\left( Y_n =-1\right)=\lambda$, for some $0 \le \lambda \le 1$. We will refer to $Y_n$ as the \textit{innovations}.
\item We assume that $\mathcal{D}$ and $\{Y_{n+1}\}_{n \ge 1}$ are independent of each other.
\end{itemize}

 Let $X_1$ be a random variable taking values $\pm 1$. Let $S_n$ denote the position of the Tampered Memory Elephant Random Walk (TMERW) taking values in $\Z$. Then $S_n$ is defined recursively as follows:  
\[
S_0 \equiv 0, S_1 \equiv X_1, 
\]
and for $n \ge 2$, having constructed $S_n$, to obtain $S_{n+1}$, we choose a number $I_{n+1}$ uniformly at random from the past $\{1,2,\ldots, n\}$. Then, 
\begin{equation}
\label{Def: TERW}
S_{n+1}=S_n + X_{n+1},
\end{equation}
where $X_{n+1}$ is defined by 
\begin{equation}
\label{Eq:Choice_of_step-sizes}
X_{n+1}= \begin{cases}
           X_{I_{n+1}}, \, & \text{ w.p. } \ p, \text{ if } I_{n+1} \notin D_n, \\
           -X_{I_{n+1}}, \, & \text{ w.p. } \ 1-p, \text{ if } I_{n+1} \notin D_n, \\
           Y_{n+1}, \,  & \text{ if }  I_{n+1} \in D_n.
\end{cases}
\end{equation}
Observe that if $D_n = \emptyset$, then we retrieve the classical ERW. A motivation for TMERW is to imagine a competition between the two walks, namely the simple random walk and the ERW, in which the walker decides the move according to whether the past step is chosen from $D_n$ or not. The idea of introducing innovations on the set $D_n$ is motivated by an attempt to understand the influence of \textit{memory} on the fluctuations of the elephant random walk and possibly have a slightly better understanding of the breakpoint of memory to observe the phase transition into the superdiffusive regime for the classical ERW, which is an open question as observed in \cite{Gu_St_2021}. Given the costs of storing large amounts of memory, it is quite natural to ask how much memory is actually necessary for desirable properties, and whether we get away with inducing some noise in the memory. 
\begin{remark}
A multidimensional version of TMERW can be defined with appropriate modifications in \eqref{Eq:Choice_of_step-sizes} similar to \cite{Be_La_2019a}. The technique of the proofs of the main theorems should also carry forward.
\end{remark}
We make some further assumptions on the structure of the sets $\mathcal{D}$, which are discussed below.

\begin{defi}[Nested Sequence of Memory Sets]
\label{Def: nested_sets}
The sequence $\mathcal{D}$ is called nested if both the collection $\{D_n\}_{n \ge 1}$ and $\{D^c_n\}_{n \ge 1}$ are non-decreasing, that is, $D_n \subseteq D_{n+1}$ and $D^c_n \subseteq D^c_{n+1}$ for all $n$. Otherwise, we will refer to the collection $\mathcal{D}$ as non-nested.
\end{defi}

For our purpose, the nested sequence $\mathcal{D}$ will be generated using an appropriate stochastic process that is independent of the walk $S_n$.
\begin{defi}[Collection of nested sequences $\mathcal{D}$ generated by a stochastic chain]
Consider a stochastic chain $\{L_n\}_{n \ge 1}$ taking values in $\{0, 1\}$, such that $L_1 \equiv 1$.
Setting $D_1^c \equiv 1$, we define $D_n^c$ recursively as 
\begin{equation}
\label{Eq: Def_D_n_c}
D^c_{n+1}= \begin{cases}
D^c_n \cup \{n+1\}, & \text{ if } L_{n+1}=1,\\
D^c_n, & \text{ otherwise.}
\end{cases}
\end{equation}
In this case, we refer to $\mathcal{D}$ as the collection of nested sequences generated by the chain $\{L_n\}_{n \ge 1}$. 
\end{defi}
A trivial observation is that if $\mathcal{D}$ is generated by the chain $\{L_n\}_{n \ge 1}$, then by the definition itself, $\{D^c_{n}\}_{n \ge 1}$ is an increasing sequence of sets. Furthermore, since $D_n \cup D_n^c = \{1,2,\ldots, n\}$, it is obvious that $D_1 =\emptyset$ and 
\begin{equation}
\label{Eq: Def_D_n}
D_{n+1}= \begin{cases}
D_n \cup \{n+1\}, & \text{ if } L_{n+1}=0,\\
D_n, & \text{ otherwise.}
\end{cases}
\end{equation}
Thus, the sequence $\{D_n\}_{n \ge 1}$ are non-decreasing, and this shows that $\mathcal{D}$ is a nested sequence. 
Before we give some typical examples of $\{L_n\}_{n \ge 1}$, we discuss a little more about the structure of the set $\DD$.
Recall that $L_1=1$. Now consider the following sequence of random times defined by \begin{equation}
\label{Eq: Def_tau_1}
\tau_1 := \inf \{n \ge 1: L_{n+1}=0\}; 
\end{equation}
where $\tau_1= \infty$ when the corresponding set is empty. 
If $\tau_1< \infty$, then 
\begin{equation}
\label{Eq: Def_sigma_1}
\sigma_1 := \inf \{n > \tau_1: L_{n+1}=1\} -\tau_1,  
\end{equation} with the convention that $\sigma_1= \infty$, when the above set is empty.
Otherwise, when $\tau_1= \infty$, we define 
$\sigma_1 \equiv 0$.
Furthermore, setting $\mu_0 \equiv 0$, define
\begin{equation*}
\mu_1= \tau_1+\sigma_1.
\end{equation*}
We can now define the collection $\tau_k$ and $\sigma_k$ recursively as follows. Setting $\mu_k:= \sum_{j=1}^k (\tau_j+\sigma_j)$, if $\mu_k<\infty$, then  
\begin{equation}
\label{Eq: Def_tau_k+1}
\tau_{k+1} := \inf \{n > \mu_k: L_{n+1}=0\}-\mu_k; 
\end{equation}
with the convention that  $\tau_{k+1}= \infty$, if the above set is empty. If $\mu_k =\infty$, then we set $\tau_{k+1} = 0$.

Similarly, if  $\mu_k < \infty$ and $\tau_{k+1}< \infty$, then
\begin{equation}
\label{Eq: Def_sigma_k+1}
\sigma_{k+1} := \inf \{n > \mu_k+\tau_{k+1}: L_{n+1}=1\} -\mu_k-\tau_{k+1},  
\end{equation}
with the convention that $\sigma_{k+1}= \infty$, if the above set is empty. If either $\mu_k= \infty$ or $\tau_{k+1}= \infty$, then we set $\sigma_{k+1} =0$.

In words, $\tau_k$ is the local time (that is, the time spent in) of state $\{1\}$ on the $k$-th visit to this state before its next exit from this state, and a similar explanation holds for $\sigma_k$. We will refer to these times $\tau_k$ and $\sigma_k$ as the inter-arrival times for the process $\{L_n\}_{n \ge 1}$.

As an immediate consequence of the above inter-arrival times, we have the following alternative representation for the collection $\DD$.
\begin{defi}[Equivalent definition for the nested sequences $\mathcal{D}$]
\label{Def: Alt_def_DD}Let $\mathcal{D}$ be a collection of nested sequence generated by the chain $\{L_n\}_{n \ge 1}$. Consider the sequences of random variables $\{ \tau_k, \sigma_k \}_{k \geq 1}$ where $\tau_k, \sigma_k$ as defined above. Then the sets $D_n^c$ and $D_n$ can be expressed as follows:  
for $k \in [\mu_n, \mu_{n+1})$, with $\mu_0 \equiv 0$
\[ D^c_k = \begin{cases}

    D^c_{\mu_n} \cup \{ \mu_n + 1, \mu_n + 2, \ldots, k \},  & \text{ for } \mu_n < k \leq \mu_n+\tau_{n+1}, \\ 
    D^c_{\mu_n+ \tau_{n+1}} & \text{ for } \mu_n + \tau_{n+1} < k \leq     \mu_{n+1},
\end{cases}\]
and 
\[
D_k = \begin{cases}
    D_{\mu_n} & \text{ for } \mu_n < k \le \mu_n+\tau_{n+1}, \\
    D_{\mu_n} \cup \{ \mu_n + \tau_{n+1}+1, \mu_n + \tau_{n+1}+2, \ldots, k \} & \text{ for } \mu_n < k \leq \mu_{n+1}.
\end{cases}
\]
\end{defi}
We make the following assumptions on the inter-arrival times $\{\tau_n\}_{n \ge 1}$ and $\{\sigma_n\}_{n \ge 1}$. 
\begin{enumerate}
\item[(\textbf{A1})] $\{ \tau_n \}_{n \geq 1}$ and$\{ \sigma_n \}_{n \geq 1}$ are i.i.d., and indenpendent of each other. Let $\tau$ denote a generic random variable such that $\tau \stackrel{d}{=} \tau_n$. Similarly, we define $\sigma$ to represent a generic random variable with the same law as $\sigma_n$.   
\item[(\textbf{A2})] There exists a $\delta>0$, such that, $\forall t \in (-\delta,\delta), $ $ \E[e^{t\tau}] < \infty$ and $\E[e^{t\sigma}] < \infty$.
\end{enumerate}

The i.i.d.\ assumption in (\textbf{A1}) on the sequence of inter-arrival times $\{\tau_n\}_{n \ge 1}$ and $\{\sigma_n\}_{n \ge 1}$ implies that the chain $\{L_n\}_{n \ge 1}$ has an underlying renewal structure as in regenerative chains (see \cite{Si_Wo_93} for a review on regenerative processes). The assumption (\textbf{A2}) implies that the process $\{L_n\}_{n \ge 1}$ is much more than positive recurrent, but in fact the return times have all moments finite, as we will see later. Our chain $L_n$ can have at most $2$ possible states; hence these assumptions are quite natural and satisfied by a large class of chains $L_n$, some of which we will discuss later.

\begin{defi}[Collection of nested sequences $\DD$ with renewal structure]
Let $\DD$ be a collection of nested sequences generated by a stochastic chain, such that it satisfies (\textbf{A1}) and (\textbf{A2}). Then we say that $\DD$ is a collection of nested sequences having an underlying renewal structure.
\end{defi}

\begin{example}
\label{Def: examples}
Here are some examples of nested sequences $\DD$ having an underlying renewal structure.  
\begin{itemize}
\item Let $\{L_n\}_{n \ge 1}$ be an i.i.d.\ sequence taking values in $\{0, 1\}$, such that $L_1 \equiv 1$, and let $D_n^c$ be constructed as before.

\item Consider a time-homogeneous irreducible Markov chain $\{L_n\}_{n \ge 1}$ taking values in $\{0, 1\}$, such that $L_1 \equiv 1$. 

\item Let $\tau_{1}\equiv k,$ and $\sigma_{1}\equiv \mu-k$, for some integer $\mu, k \in \Z_{+}$. In this case, we say that the collection $\DD$ is generated according to an arithmetic progression. 


\end{itemize}
\end{example}

The remainder of this paper is structured as follows. The rest of Section \ref{Sec:Intro} contains the main results and discussion on the open problems. Section \ref{Sec: techniques} introduces the restricted walks that are used to prove the main results, while Sections \ref{Sec: Proof_D_n_trivial} and \ref{sec: Proofs_D_n_non-trivial} contain their proofs. Section \ref{sec: Examples} elaborates further on Example \ref{Def: examples}. Appendices A, B, and C contain the supplementary material. 

\subsection{Main results}
\label{SubSec: Main_results}
Our main results are the law of large numbers and the central limit theorems for TMERW. In preparation for these, let us define the size of the sets $D_n$ and $D_n^c$ as 
\begin{equation}
\label{Eq: size_D}
|D_n|: = \sum_{k=1}^n \bone_{\{k \in D_n\}}, \text{ and }
|D^c_n|: = \sum_{k=1}^n \bone_{\{k \in D^c_n\}}.
\end{equation}
We will say that the size of $D_n$ is trivial if 
\begin{equation}
\label{Eq: size_D_n_0}
\frac{|D_n|}{n} \longrightarrow 0, \text{ a.s.}, 
\end{equation}
otherwise, we say that the size of $D_n$ is non-trivial.
\subsubsection{ Size of $D_n$ is trivial}
\label{Sec: D_n_size_trivial}
The following are the main results when the size of $D_n$ is trivial.
\begin{theorem}
\label{Thm: SLLN_non-nested} 
Let $S_n$ be a TMERW. 
Let $\mathcal{D}$ be any collection of sets that is not necessarily nested, such that $\frac{\mid D_n \mid }{n} \rightarrow 0, \text{a.s.}$, as $n \to \infty$. Then for $0 \le p<1$,
\begin{equation}
\label{Eq: SLLN_Dn_goes_to_zero}
\frac{S_n}{n}\longrightarrow 0, \text{ a.s.}
\end{equation}  
\end{theorem}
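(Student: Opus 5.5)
We will prove the law of large numbers by a stochastic-approximation (martingale) argument on $S_n$ itself, treating the tampered part as a perturbation whose relative weight vanishes. Let $\mathcal{F}_n$ be the $\sigma$-field generated by $X_1,\dots,X_n$, the innovations up to time $n$, and the whole collection $\mathcal{D}$. Since $\mathcal{D}$ and the innovations are independent, we may condition on $\mathcal{D}$ and treat it as deterministic. Write $a:=2p-1\in[-1,1)$, $\beta:=2\lambda-1$, and
\[
S_n^{D}:=\sum_{k\in D_n}X_k,\qquad S_n^{c}:=\sum_{k\in D_n^c}X_k=S_n-S_n^{D}.
\]
From the definition of $X_{n+1}$,
\[
\E[X_{n+1}\mid\mathcal{F}_n]=\frac{a\,S_n^{c}}{n}+\frac{\beta\,|D_n|}{n}=\frac{a\,S_n}{n}+\frac{\beta|D_n|-a\,S_n^{D}}{n}.
\]
Since $|S_n^D|\le|D_n|$, the second term $r_n:=(\beta|D_n|-aS_n^D)/n$ satisfies $|r_n|\le 2|D_n|/n\to0$ a.s. This holds even when $\mathcal{D}$ is not nested, because only $|D_n|$ enters.

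Set $\varepsilon_{n+1}:=X_{n+1}-\E[X_{n+1}\mid\mathcal{F}_n]$, a bounded martingale difference with $|\varepsilon_{n+1}|\le2$. Dividing by $n+1$, the recursion for $Z_n:=S_n/n$ becomes
\[
Z_{n+1}=Z_n+\frac{1}{n+1}\bigl((a-1)Z_n+r_n+\varepsilon_{n+1}\bigr).
\]
This is a Robbins--Monro scheme with step $\gamma_n=1/(n+1)$, drift $h(z)=(a-1)z$, and a vanishing perturbation $r_n$. Because $p<1$, we have $a-1<0$, so $0$ is the globally attracting zero of $h$. This is exactly where the hypothesis $p<1$ is used: for $p=1$ the drift vanishes and $Z_n$ need not converge to $0$.

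The noise is controlled by noting that $M_n:=\sum_{k\le n}\varepsilon_{k}/k$ is an $L^2$-bounded martingale, since $\sum k^{-2}<\infty$, so it converges a.s. To avoid quoting a general ODE theorem, we conclude by a direct comparison. Let $b:=1-a\in(0,2]$. Unrolling the linear recursion gives
\[
Z_n=\sum_{k} w_{k,n}\,\frac{r_k+\varepsilon_{k+1}}{k+1}+\text{(initial term)},\qquad w_{k,n}=\prod_{j=k+1}^{n-1}\Bigl(1-\frac{b}{j+1}\Bigr)\asymp (k/n)^{b}.
\]
Each term is handled separately:
\begin{itemize}
\item The initial term decays like $n^{-b}\to0$.
\item The $r$-part is a Toeplitz/Cesàro-type average of $r_k\to0$ with weights $\frac{1}{k}(k/n)^b$, whose row sums are $O(1)$ and whose entries vanish for each fixed $k$. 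Hence it tends to $0$.
\item The $\varepsilon$-part is $n^{-b}\sum_k k^{b}\,\varepsilon_{k+1}/(k+1)$ up to constants. By Kronecker's lemma applied to the convergent series $\sum \varepsilon_{k+1}/(k+1)$ with increasing weights $k^b\uparrow\infty$, it tends to $0$ a.s.
\end{itemize}
One can equally invoke a standard stochastic-approximation result for linear drifts with asymptotically vanishing bias.

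The main obstacle is the $\varepsilon$-part, specifically making the Kronecker step rigorous. The product $w_{k,n}$ is only asymptotically $(k/n)^b$, so we first replace it by the exact products $\pi_n:=\prod_{j\le n}(1-b/(j+1))$. Writing $Z_n\pi_n^{-1}$ as a sum then applies Kronecker directly with the weights $\pi_k^{-1}\uparrow\infty$; these are eventually increasing, with the finitely many early factors possibly $\le0$ when $b$ is near $2$ handled by starting the recursion at a large index. Everything is done conditionally on $\mathcal{D}$ on the event $\{|D_n|/n\to0\}$, which has probability one, and integrating over $\mathcal{D}$ yields \eqref{Eq: SLLN_Dn_goes_to_zero}.
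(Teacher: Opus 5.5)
Your proof is correct and is essentially the paper's. The paper writes the same recursion $\frac{S_{n+1}}{n+1}=\frac{S_n}{n}+\frac{1}{n+1}\bigl[2(p-1)\frac{S_n}{n}+\Delta M_{n+1}+r_{n+1}\bigr]$ with $r_{n+1}=\OO(|D_n|/n)$ and bounded martingale differences, and then invokes the ODE-method result (Theorem \ref{Thm: SA_SLLN}); you instead prove that step by hand for this scalar linear drift, unrolling the recursion and using Toeplitz for the bias and Kronecker for the noise, which makes the argument self-contained. One minor point: putting all of $\mathcal{D}$ into $\mathcal{F}_n$ needs $\mathcal{D}$ to be independent of the sampled indices and $p$-coins, not just of the innovations, whereas the paper's filtration $\sigma(X_1,\dots,X_n,D_1,\dots,D_n)$ works verbatim in your argument and avoids that extra assumption.
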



Next, we present the fluctuation results for the TMERW.
We observe a phase transition, similar to the classical ERW model, irrespective of whether the set $D_n$ corresponds to distant or recent memory.
\begin{theorem}
\label{Thm: CLT_D_n/n_goes_to_zero} 
Let $S_n$ be a TMERW. 
Let $\mathcal{D}$ be a collection of sets that are not necessarily nested, such that $\frac{\mid D_n \mid }{n} \rightarrow 0, \text{a.s.}$, as $n \to \infty$. Then, we have the following phase transitions into diffusive, critical, and superdiffusive regimes given by 
\begin{enumerate}[(i)]
\item If $ p < 3/4$ and $\frac{|D_n|}{\sqrt{n}}\stackrel{L^2}{\longrightarrow} 0 $, then 
\begin{equation*}
\label{Eq: D_n_goes_to_zero_sub-diffussive}
\frac{S_n}{\sqrt{n}} \stackrel{d}{\longrightarrow} N \left(0, \frac{1}{3-4p}\right).
\end{equation*}
\item If $p =\frac{3}{4}$, and $|D_n| = \mathcal{O}(n^{\gamma}), \text{ a.s.}$ for $0 \le \gamma < 1/2$, 
\begin{equation*}
\frac{S_n}{\sqrt{n \log n }}\stackrel{d}{\longrightarrow} N (0,1).
\end{equation*}
\item For $\frac{3}{4}<p<1$, and $|D_n| = \mathcal{O }(n^{\gamma}), \text{ a.s.}$ for $0 \le \gamma < 2p-1$, then there exists a random variable $L$ such that
\begin{equation}
\label{Eq: CLT_D_n_to_zero_p_ge_3/4}
\frac{S_n}{n^{2p-1}}\rightarrow L, \text{ a.s.}
\end{equation}
\end{enumerate}
\end{theorem}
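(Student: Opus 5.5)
Throughout, write $a := 2p-1$ and let $\mathcal{F}_n$ be the $\sigma$-field generated by $X_1,\dots,X_n$, the sets $D_1,\dots,D_n$ and the innovations up to time $n$. The plan is to reduce the TMERW to a perturbed ERW-type linear recursion and run the martingale argument of \cite{Be_2018}, treating the tampered part as a drift error of size $\OO(|D_n|/n)$.

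\textbf{Step 1: the recursion.} From \eqref{Eq:Choice_of_step-sizes}, and since $\mathcal{D}$ and the innovations are independent of the walk,
\[
\E[X_{n+1}\mid \mathcal{F}_n] = \frac{1}{n}\Big( a\sum_{k\in D_n^c} X_k + (2\lambda-1)|D_n|\Big) = \frac{a}{n}S_n + \frac{R_n}{n},
\]
where
\[
R_n := -a\sum_{k\in D_n}X_k + (2\lambda-1)|D_n| .
\]
This gives $|R_n|\le 2|D_n|$. Hence
\[
S_{n+1} = \Big(1+\frac{a}{n}\Big)S_n + \frac{R_n}{n} + \varepsilon_{n+1},
\]
where $\varepsilon_{n+1}$ is a bounded martingale difference. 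Its conditional variance is
\[
\E[\varepsilon_{n+1}^2\mid\mathcal{F}_n] = 1 - \Big(\frac{aS_n+R_n}{n}\Big)^2 .
\]

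\textbf{Step 2: the martingale decomposition.} Set $a_n := \prod_{k=2}^{n-1}(1+a/k)$. The product is started at $k=2$ so that $a_n$ does not vanish when $p=0$, and $a_n\sim c\,n^{a}$. Define
\[
M_n := \frac{S_n}{a_n} - \sum_{k=2}^{n-1}\frac{R_k}{k\,a_{k+1}} ,
\]
which is a martingale with increments $\varepsilon_{k+1}/a_{k+1}$. Then
\[
S_n = a_n M_n + a_n\sum_{k<n}\frac{R_k}{k\,a_{k+1}} .
\]
The error term $E_n := a_n\sum_{k<n} R_k/(k a_{k+1})$ is bounded by $C n^{a}\sum_{k<n}|D_k|\,k^{-1-a}$. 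By Theorem \ref{Thm: SLLN_non-nested} together with $|D_n|/n\to0$, we have $\E[\varepsilon_{n+1}^2\mid\mathcal{F}_n]\to 1$ a.s.

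\textbf{Step 3: the three regimes.}

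(i) Let $a<1/2$. Taking $L^1$ norms and using $\E|D_k|\le (\E|D_k|^2)^{1/2}=o(\sqrt k)$ gives
\[
\E|E_n|/\sqrt n \le C n^{a-1/2}\sum_{k<n} o(k^{-1/2-a}) .
\]
Since $\sum_{k<n}k^{-1/2-a}\asymp n^{1/2-a}$, a Toeplitz argument shows this is $o(1)$, so $E_n/\sqrt n\to0$ in probability. For the martingale part,
\[
\langle M\rangle_n = \sum_k \frac{\E[\varepsilon_{k+1}^2\mid\mathcal{F}_k]}{a_{k+1}^2} \sim c^{-2}\frac{n^{1-2a}}{1-2a}\quad\text{a.s.},
\]
so that $a_n^2\langle M\rangle_n/n\to 1/(1-2a)=1/(3-4p)$. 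Lindeberg's condition is immediate from bounded increments and $a_n^2/n\to0$ relative to the total variance. The martingale CLT then gives (i) by Slutsky.

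(ii) Let $a=1/2$. Here $|E_n|\le C n^{1/2}\sum_k k^{\gamma-3/2}=\OO(\sqrt n)=o(\sqrt{n\log n})$ a.s. Also $a_n^2\langle M\rangle_n\sim n\log n$, and the martingale CLT yields the $N(0,1)$ limit.

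(iii) Let $a>1/2$. Now $\sum_k a_{k+1}^{-2}<\infty$, so $M_n$ is $L^2$-bounded and converges a.s. Moreover $\sum_k |D_k|k^{-1-a}<\infty$ a.s. because $\gamma<a$. Hence $S_n/a_n$ converges a.s., and since $a_n\sim c\,n^{2p-1}$, this gives \eqref{Eq: CLT_D_n_to_zero_p_ge_3/4}.

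\textbf{Main obstacle.} I expect the delicate point to be the diffusive error bound in (i). The hypothesis there is only $L^2$ (not almost sure), so the Toeplitz step must be done in expectation rather than pathwise. The negative-$a$ range also needs the care with $a_n$ noted above. A secondary point is that the conditional-variance convergence needs the a.s. law of large numbers from Theorem \ref{Thm: SLLN_non-nested}.
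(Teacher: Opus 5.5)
Your proof is correct, but it takes a different route from the paper's.

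\textbf{What the paper does.} It sets up the scalar stochastic approximation $\theta_n=S_n/n$ with mean field $h(\theta)=2(p-1)\theta$ and remainder $r_{n+1}=(2\lambda-1)|D_n|/n-(2p-1)S_n^D/n=\mathcal{O}(|D_n|/n)$. This is the same split of the drift as your $R_n$. It then applies the stochastic-approximation limit theorems of \cite{Zh_2016} (Appendix A) as black boxes. The hypotheses on $|D_n|$ in (i)--(iii) are exactly what is needed to verify the remainder conditions there:
\begin{itemize}
\item $(n+1)\mathbb{E}[r_{n+1}^2]\to 0$ in (i);
\item $\sum_{k\le n} r_k=o\bigl(\sqrt{n/\log n}\bigr)$ in (ii);
\item $\sum_{k\le n} r_k=o(n^{2p-1-\delta_0})$ in (iii).
\end{itemize}
In each case $\Gamma=1$ comes from the same conditional-variance computation you do.

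\textbf{What you do.} You linearise exactly, as in \cite{Be_2018}, and normalise by $a_n\sim c\,n^{2p-1}$. You then split $S_n$ into a martingale part and an explicit weighted drift error $E_n$. After that, only the martingale CLT and $L^2$-martingale convergence are needed.

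\textbf{What your route gains.}
\begin{itemize}
\item It is self-contained.
\item It gives an explicit form of the superdiffusive limit, $L=c\bigl(M_\infty+\sum_{k\ge 2}R_k/(k\,a_{k+1})\bigr)$ with $c=\lim_n a_n n^{1-2p}$.
\item It shows the hypotheses can be relaxed. In (i) you only use $\mathbb{E}|D_n|=o(\sqrt{n})$, not $L^2$ convergence. In (iii) you only need $\sum_k |D_k|\,k^{-2p}<\infty$ a.s., with no polynomial margin $\delta_0$.
\end{itemize}

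\textbf{What the paper's route gains.} It is uniform with the non-trivial case. There $S_n^D/n$ is not negligible, and the drift depends on $S_n^{D^c}$ rather than $S_n$. A scalar linearisation with a negligible error is then not available, and the two-dimensional SA framework is used instead. The limiting variance is also read off directly from the integral formula in Zhang's theorem.
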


\subsubsection{Size of $D_n$ is non-trivial}
\label{Subsec: Case I}
The next theorem is the law of large numbers when $|D_n|$ is non-trivial, under the additional assumption of $\DD$ being a nested sequence. 
\begin{theorem}
\label{Thm: SLLN_renewal_chain}
Let $S_n$ be a TMERW. Assume that $\DD$ is a collection of nested sequences having an underlying renewal structure, that is,  
the assumptions (\textbf{A1}) and (\textbf{A2}) hold. Then, as $n \to \infty$,
    \begin{equation}
    \label{eq:SLLN}
       \frac{S_n}{n} \longrightarrow \frac{(2\lambda-1)\E[\sigma]}{\mu - (2p-1)\E[\tau]} \; \text{ a.s.},
    \end{equation}
where $\mu = \E[\tau]+ \E[\sigma]$. 
\end{theorem}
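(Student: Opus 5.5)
The plan is to split the walk along the memory partition and run a stochastic-approximation argument for the memory-driven part. Write $S_n=A_n+B_n$ with $A_n=\sum_{k\in D^c_n}X_k$ and $B_n=\sum_{k\in D_n}X_k$. Since $\DD$ is nested, the step $X_{n+1}$ is added to $A$ if $L_{n+1}=1$ and to $B$ if $L_{n+1}=0$. The chain $\{L_n\}$ is independent of the walk and of the innovations, so I condition on the whole sequence $\{L_n\}_{n\ge1}$ and treat it as deterministic. Let $\mathcal F_n$ be the natural filtration of the walk. The recursion \eqref{Eq:Choice_of_step-sizes} gives
\[
\E[X_{n+1}\mid \mathcal F_n]=(2p-1)\frac{A_n}{n}+(2\lambda-1)\frac{|D_n|}{n}.
\]
Hence
\[
A_{n+1}=A_n\Bigl(1+\tfrac{(2p-1)L_{n+1}}{n}\Bigr)+L_{n+1}(2\lambda-1)\tfrac{|D_n|}{n}+L_{n+1}\varepsilon_{n+1},
\]
where $\varepsilon_{n+1}$ is a bounded martingale difference. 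Put $\rho=\E[\tau]/\mu$ and $\beta=(2p-1)\rho$. By the renewal SLLN under (\textbf{A1})--(\textbf{A2}), $|D^c_n|/n\to\rho$ and $|D_n|/n\to\E[\sigma]/\mu$ a.s.

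The first step is a renewal estimate with a rate. From the exponential moments in (\textbf{A2}), a Chernoff bound for the block sums combined with Borel--Cantelli should give $\bigl|\sum_{k\le n}L_k-\rho n\bigr|=\OO(n^{1/2+\epsilon})$ a.s. Summation by parts then shows that $\sum_{k\le n}(L_{k+1}-\rho)/k$ converges a.s. Consequently the product $\gamma_n=\prod_{k<n}\bigl(1+(2p-1)L_{k+1}/k\bigr)$ satisfies $\gamma_n\sim C n^{\beta}$ a.s. for some random $C\in(0,\infty)$. (If $2p-1<0$, a few initial factors may vanish or be negative; start the product after a fixed index.) Note that $\beta<1$: this is automatic for $p<1$, and for $p=1$ it holds because $\E[\sigma]>0$ in the nontrivial case.

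The second step solves the recursion explicitly:
\[
\frac{A_n}{\gamma_n}=\sum_{k<n}\frac{L_{k+1}(2\lambda-1)|D_k|/k}{\gamma_{k+1}}+\sum_{k<n}\frac{L_{k+1}\varepsilon_{k+1}}{\gamma_{k+1}}+\OO(1).
\]
For the drift term, set $c=(2\lambda-1)\E[\sigma]/\mu$. Using $|D_k|/k\to\E[\sigma]/\mu$, $\gamma_k\sim Ck^\beta$, and summation by parts against $\sum L_k\approx\rho n$, it behaves like $\rho c\,n^{1-\beta}/\bigl(C(1-\beta)\bigr)$. For the martingale term, let $M_n=\sum_{k<n}L_{k+1}\varepsilon_{k+1}/\gamma_{k+1}$; its increments have conditional variance at most $\OO(k^{-2\beta})$. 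Since $\beta<1$, the strong law for martingales (or Kronecker's lemma applied to $\sum \gamma_{k+1}^{-1}\varepsilon_{k+1}\cdot\gamma_{k+1}/k$) gives $\gamma_nM_n/n\to0$ a.s. Together these yield
\[
a:=\lim_n \frac{A_n}{n}=\frac{\rho c}{1-\beta}\quad\text{a.s.}
\]

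The third step handles $S_n$ directly. Writing $S_n=\sum_{k<n}\bigl((2p-1)A_k/k+(2\lambda-1)|D_k|/k\bigr)+N_n$ with $N_n$ a martingale with bounded increments, we have $N_n/n\to0$ a.s. Cesàro averaging then gives $S_n/n\to(2p-1)a+c$. It remains to simplify:
\[
(2p-1)a+c=c\Bigl(\tfrac{\beta}{1-\beta}+1\Bigr)=\tfrac{c}{1-\beta}=\tfrac{(2\lambda-1)\E[\sigma]}{\mu-(2p-1)\E[\tau]},
\]
which is \eqref{eq:SLLN}.

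I expect the main obstacle to be the first step. The coefficient $L_{n+1}$ does not converge, so the limiting ODE is not autonomous, and the oscillations of $L$ must be averaged on the $\log n$ time scale. This needs a quantitative renewal rate that makes $\sum_k(L_{k+1}-\rho)/k$ converge, and this is exactly where (\textbf{A2}) enters. I would try Cramér-type large deviation bounds for the partial sums $\sum_{j\le m}(\tau_j-\rho(\tau_j+\sigma_j))$, together with control of the last incomplete block via $\max_{j\le n}(\tau_j+\sigma_j)=\OO(\log n)$ a.s.
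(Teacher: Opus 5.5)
Your proof is correct, and it takes a genuinely different route from the paper.

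\textbf{The paper's route.} The paper works in block time. It samples the walk only at the renewal epochs $\mu_n$ and computes the conditional means of the block increments given $\GG_n$ (Lemma~\ref{lem:alpha_beta}); there the products over the random block length $\tau_{n+1}$ are expanded, and (\textbf{A2}) controls the tail of that expansion. This yields a two-dimensional linear stochastic-approximation recursion for $(S^{D^c}_{\mu_n}/n,\,S^{D}_{\mu_n}/n)$ driven by the matrix in \eqref{Eq: matrix_A}. The paper then applies the ODE method and interpolates between renewal epochs (Lemma~\ref{lem:concludinglemma}).

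\textbf{Your route.} You work quenched and in natural time. Conditioning on $\{L_n\}$ is legitimate because $\DD$ is independent of the walk's own randomness, and Fubini transfers the a.s.\ statement back. After that, only the scalar, time-inhomogeneous linear recursion for your $A_n=S^{D^c}_n$ matters, since $S^D_n$ never feeds back into the drift. You solve it exactly through $\gamma_n$, and the only input about $\DD$ is pathwise renewal estimates giving $\gamma_n\sim Cn^{\beta}$.

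\textbf{What each approach buys.} Your argument is more elementary and avoids both the annealed averaging over $\tau_{n+1}$ and the series expansion in Lemma~\ref{lem:alpha_beta}. It also shows directly that, for $p>\frac12$, your exponent $\beta=(2p-1)\E[\tau]/\mu$ equals $1-\rho^*$ from Lemma~\ref{Lem: eigenvalues_of_A}, i.e.\ the superdiffusive exponent in Theorem~\ref{Thm: anamolous_diffusion_Dn_non_trivial}. It also needs less than (\textbf{A2}). The bound $|D^c_n|-\rho n=\OO(n^{1/2+\epsilon})$ already follows from finite second moments of $\tau,\sigma$. With a Toeplitz argument that splits the drift sum at $k=\epsilon n$, the bare renewal law $|D^c_n|-\rho n=o(n)$ suffices. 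The paper's block formulation pays off through reuse: the same recursion and matrix feed directly into Zhang's CLT-type theorems for the fluctuation results in the arithmetic-progression case.
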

\begin{remark}
\label{Re: DD_non_trivial}
Observe that in the statement of the above theorem, the non-triviality of the limit of $\frac{|D_n|}{n}$ is implied by the assumptions (\textbf{A1}) and (\textbf{A2}), which will be proved in Lemma \ref{Lem: SLLN for D}. 
\end{remark}
\begin{remark}
\label{Re: size_D_n_role}
Observe that in Theorem \ref{Thm: SLLN_non-nested}, the collection of sets $\DD$ is such that, $\frac{|D_n|}{n} \longrightarrow 0, \text{ a.s.}$, as $n \to \infty$, that is, the size of the set on which we tamper with the memory is trivial. In this case, we prove the SLLN for any collection $\DD$, nested or otherwise. However, in the case when $|D_n|$ is non-trivial, we require the additional assumption that $\DD$ is nested. We believe that the SLLN is still valid when $|D_n|$ is non-trivial, even when $\DD$ is non-nested. However, our proof does not carry forward in this case.
\end{remark}
An immediate corollary of Theorem \ref{Thm: SLLN_renewal_chain} is the following.
\begin{corollary}
\label{Cor: recurrence_tansience}
Let $S_n$ be a TMERW. Assume that $\DD$ is a collection of nested sequence having an underlying renewal structure, that is,  
the assumptions (\textbf{A1}) and (\textbf{A2}) hold. Then, as $n \to \infty$
\[
S_n \longrightarrow \begin{cases}
+\infty, & \text{if } \lambda >\frac{1}{2}, \\
-\infty, & \text{if } \lambda <\frac{1}{2}.
\end{cases}
\]
This shows that $S_n$ is transient when $\lambda \neq \frac{1}{2}$.
\end{corollary}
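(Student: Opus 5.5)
This follows from Theorem \ref{Thm: SLLN_renewal_chain}, by checking that the almost sure limit of $S_n/n$ is nonzero with the sign of $2\lambda-1$. Put
\[
\ell := \frac{(2\lambda-1)\E[\sigma]}{\mu-(2p-1)\E[\tau]}, \qquad \mu=\E[\tau]+\E[\sigma].
\]
The steps are: (a) show the denominator is strictly positive; (b) show $\E[\sigma]>0$; (c) pass from $S_n/n\to\ell$ to $S_n\to\pm\infty$.

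For (a), write
\[
\mu-(2p-1)\E[\tau] = \E[\sigma] + 2(1-p)\E[\tau].
\]
Both terms are nonnegative, and by (\textbf{A2}) both expectations are finite. So the denominator is positive as soon as $\E[\sigma]>0$, which is exactly (b).

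Step (b) is the only point needing care. It amounts to saying that the tampered set $D_n$ genuinely grows linearly; otherwise the limit in Theorem \ref{Thm: SLLN_renewal_chain} would be $0$ and the corollary would fail. I would get it from the renewal structure: by (\textbf{A1}), each excursion of $\{L_n\}$ into the state $0$ has length $\sigma_k\ge 1$ whenever $\tau_k<\infty$. Under (\textbf{A2}), $\tau<\infty$ almost surely, so $\sigma\ge1$ almost surely and $\E[\sigma]\ge 1>0$. This is consistent with Remark \ref{Re: DD_non_trivial}, which states that $\lim |D_n|/n = \E[\sigma]/\mu$ is non-trivial under these assumptions, and I would cite Lemma \ref{Lem: SLLN for D} for that. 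I would also note that $p<1$ is not needed here: if $p=1$ the denominator reduces to $\E[\sigma]>0$ anyway.

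With $\ell$ well defined, $\ell>0$ when $\lambda>1/2$ and $\ell<0$ when $\lambda<1/2$. For (c), take $\lambda>1/2$. Theorem \ref{Thm: SLLN_renewal_chain} gives an almost sure $N$ with $S_n\ge n\ell/2$ for all $n\ge N$, hence $S_n\to+\infty$ almost surely. The case $\lambda<1/2$ is symmetric and gives $S_n\to-\infty$.

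Transience is then immediate. Every site of $\Z$ is visited only finitely often almost surely, because $|S_n|\to\infty$.
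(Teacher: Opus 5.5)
Your proof is correct and follows the paper's route: deduce the result from Theorem \ref{Thm: SLLN_renewal_chain} by showing the limit is nonzero with the sign of $2\lambda-1$, using $\E[\sigma]\ge 1$ (as in Lemma \ref{Lem: tau_sigma_positive}). Your identity $\mu-(2p-1)\E[\tau]=\E[\sigma]+2(1-p)\E[\tau]$ replaces the paper's case split in $p$ with one line. You also make explicit that the numerator factor $\E[\sigma]$ is nonzero, which the paper leaves implicit.
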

In this case, the fluctuation results that we have are only when the set $\DD$ corresponds to an arithmetic progression, as discussed in Examples \ref{Def: examples}. We believe that this is a technical difficulty; the fluctuation result can be proven for more general $\DD$ (see Problem \ref{Conj: CLT_general_tau_sigma}).
\begin{theorem}
\label{Thm: anamolous_diffusion_Dn_non_trivial}
Let $S_n$ be a TMERW. Let the collection of sets $\DD$ be a nested sequence generated according to an arithmetic progression, i.e for some fixed $\mu, k \in \bN$, $\PP \left(\tau =k\right)=\PP \left(\sigma =\mu-k\right)=1$. 
Write $ s^*=\frac{(2\lambda-1)(\mu-k)}{\mu - (2p-1)k}$. Let $\Sigma$ and $\widetilde{\Sigma}$ be two positive constants that depend on the parameters. Then,
\begin{enumerate}[i.]
\item If $k> \frac{\mu}{2}$, then there exists a phase transition into a diffusive, critical and superdiffusive behaviour characterised below:
\begin{enumerate}[a.]
\item When $p<\frac{1}{2}+\frac{\mu}{4k}$, then 
\begin{equation}
\label{Eq: CLT_renewal_sub_diffusive}
\frac{S_n -n s^*}{\sqrt{n}}\stackrel{d}{\longrightarrow} N (0, \Sigma),
\end{equation}
\item When $p= \frac{1}{2}+\frac{\mu}{4k}$, then 
\begin{equation}
\label{Eq: CLT_renewal_diffusive}
\frac{S_n- n s^* }{\sqrt{n \log n}}\stackrel{d}{\longrightarrow} N (0,\widetilde \Sigma),
\end{equation}
\item When $\frac{1}{2}+\frac{\mu}{4k} <p \le 1$, then there exists a random variable $\xi$, such that, 
\begin{equation}
\label{Eq: lim_renewal_super_diffusive}
\frac{S_n- n s^* }{n^{1-\rho^*}} \longrightarrow \xi, \text{ a.s.},
\end{equation}
where $\rho^*= 1- (2p-1)\frac{k}{\mu}$. 
\end{enumerate}
\item For $k =\frac{\mu}{2}$, if  $0 \le p <1$, 
\begin{equation}
\label{Eq: CLT_sqrt_n_equal parts memmory}
\frac{S_n -n s^*}{\sqrt{n}}\stackrel{d}{\longrightarrow} N (0, \Sigma),
\end{equation}
And when $p=1$,
\begin{equation}
\label{Eq: CLT_sqrt_n_logn_equal parts memmory}
\frac{S_n -n s^*}{\sqrt{n \log n}}\stackrel{d}{\longrightarrow} N (0, \widetilde\Sigma),
\end{equation}
\item If $k < \frac{\mu}{2}$, and $0 \le p \le 1$, then 
\begin{equation}
\label{Eq: CLT_sqrt_n_only}
\frac{S_n -n s^*}{\sqrt{n}}\stackrel{d}{\longrightarrow} N (0, \Sigma).
\end{equation}
\end{enumerate}
\end{theorem}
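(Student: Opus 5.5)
The plan is to split the walk into its two competing components and run a stochastic-approximation/martingale argument on the memory component. Write $S_n=M_n+N_n$, where
\[
M_n=\sum_{j\in D^c_n}X_j,\qquad N_n=\sum_{j\in D_n}X_j .
\]
With $\mathcal F_n$ the natural filtration (and $\DD$ deterministic), the definition \eqref{Eq:Choice_of_step-sizes} gives
\[
\E[X_{n+1}\mid\mathcal F_n]=(2p-1)\frac{M_n}{n}+(2\lambda-1)\frac{|D_n|}{n}.
\]
So the drift depends only on $M_n$ and on the deterministic quantity $|D_n|$. Moreover $M_{n+1}=M_n+\bone_{\{n+1\in D^c\}}X_{n+1}$, so $M$ is a self-driven recursion switched on only along the blocks of length $k$ in each period of length $\mu$.

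\textbf{Step 1 (recursion for $M$).} Put $\gamma_n=(2p-1)\bone_{\{n+1\in D^c\}}/n$ and $\varepsilon_{n+1}=X_{n+1}-\E[X_{n+1}\mid\mathcal F_n]$. Then
\[
M_{n+1}=(1+\gamma_n)M_n+\bone_{\{n+1\in D^c\}}\Bigl((2\lambda-1)\frac{|D_n|}{n}+\varepsilon_{n+1}\Bigr).
\]
Let $a_n=\prod_{i<n}(1+\gamma_i)$. I first show that $a_n\sim c\,n^{\alpha}$ with $\alpha=(2p-1)k/\mu=1-\rho^*$ and some constant $c>0$. Indeed,
\[
\log a_n=\frac{(2p-1)k}{\mu}\log n+\sum_{i<n}\frac{(2p-1)\bigl(\bone_{\{i+1\in D^c\}}-k/\mu\bigr)}{i}+O(1),
\]
and the middle series converges by summation by parts, since the partial sums of the periodic, mean-zero sequence $\bone_{\{i+1\in D^c\}}-k/\mu$ are bounded. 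This is where the arithmetic-progression assumption is used. Dividing by $a_{n+1}$ and summing then expresses $M_n/a_n$ as a deterministic sum plus the martingale
\[
W_n=\sum_{i<n}\bone_{\{i+1\in D^c\}}\,\frac{\varepsilon_{i+1}}{a_{i+1}} .
\]
I would evaluate the deterministic part with the same summation-by-parts device and the fact that $|D_n|=n(\mu-k)/\mu+O(1)$. After substituting into $S_n$ (Step 2), this yields a centering $ns^*+O(n^{\alpha})+O(1)$ for $S_n$, consistent with Theorem \ref{Thm: SLLN_renewal_chain}.

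\textbf{Step 2 (the innovation part).} The component $N_n$ has increments $\bone_{\{i+1\in D\}}X_{i+1}$, with conditional mean $\bone_{\{i+1\in D\}}\bigl((2p-1)M_i/i+(2\lambda-1)|D_i|/i\bigr)$. Hence
\[
N_n=\sum_{i<n}\bone_{\{i+1\in D\}}\Bigl((2p-1)\frac{M_i}{i}+(2\lambda-1)\frac{|D_i|}{i}\Bigr)+\sum_{i<n}\bone_{\{i+1\in D\}}\varepsilon_{i+1}.
\]
Inserting the representation of $M_i$ from Step 1, I write $S_n-ns^*$ as a linear functional $\sum_{i<n}c_{n,i}\varepsilon_{i+1}$ of the martingale differences plus a deterministic remainder. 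The weights $c_{n,i}$ are explicit: for $i+1\in D$ the weight is $1$, and for $i+1\in D^c$ it is of the form $a_n/a_{i+1}$ corrected by a Cesàro-type factor. The exact form of $c_{n,i}$ is what produces the constants $\Sigma$ and $\widetilde\Sigma$. The conditional variances are $\E[\varepsilon_{i+1}^2\mid\mathcal F_i]=1-\bigl(\E[X_{i+1}\mid\mathcal F_i]\bigr)^2$, and by Theorem \ref{Thm: SLLN_renewal_chain} these converge a.s. along each residue class modulo $\mu$ to explicit constants, namely $1-(s^*+\text{periodic correction})^2$.

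\textbf{Step 3 (three regimes).} Since $\sum_{i<n}c_{n,i}^2\asymp\sum_{i<n}(n/i)^{2\alpha}$, the regimes are decided by comparing $\alpha$ with $1/2$.
\begin{enumerate}[(a)]
\item If $\alpha<1/2$, the sum $\sum_{i<n}c_{n,i}^2$ is of order $n$, and the martingale CLT (Hall–Heyde, applied to the triangular array $n^{-1/2}c_{n,i}\varepsilon_{i+1}$, with a trivially verified Lindeberg condition since the increments are bounded) gives \eqref{Eq: CLT_renewal_sub_diffusive}. This case covers i.a., all of ii.\ with $p<1$, and all of iii.
\item If $\alpha=1/2$, the same sum is of order $n\log n$, which gives \eqref{Eq: CLT_renewal_diffusive} and \eqref{Eq: CLT_sqrt_n_logn_equal parts memmory}. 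When $k=\mu/2$ and $p=1$ we have $\alpha=1/2$ exactly.
\item If $\alpha>1/2$, the martingale $W_n$ is bounded in $L^2$ because $\sum_i a_i^{-2}<\infty$, so it converges a.s. Meanwhile the genuinely diffusive contributions (the $N$-martingale and the remainders) are $O(\sqrt{n\log\log n})=o(n^{\alpha})$ by the martingale LIL or a Kronecker-type argument. This gives \eqref{Eq: lim_renewal_super_diffusive} with $\xi=c\cdot(\text{const})\cdot\lim W_n$ plus the deterministic $O(n^{\alpha})$ term.
\end{enumerate}
The thresholds match the statement: $\alpha>1/2$ is equivalent to $p>\tfrac12+\tfrac{\mu}{4k}$, which is possible with $p\le 1$ only when $k>\mu/2$.

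\textbf{Main obstacle.} I expect the hardest step to be Step 2: controlling the weights $c_{n,i}$ finely enough to show that $\sum_i c_{n,i}^2\,\E[\varepsilon_{i+1}^2\mid\mathcal F_i]/n$, or $/(n\log n)$ in the critical case, converges to a deterministic constant. The weights and the conditional variances both oscillate periodically, so one must average over a period. My plan is to pass to the block times $n=j\mu$ and treat the $\mu$ residue classes separately, using that the products $a_n$ are asymptotically power-like within each class. Interpolation between block times is then handled trivially, since it changes $S_n$ by at most $\mu$.
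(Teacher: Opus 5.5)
Your proposal is correct, but it proceeds differently from the paper. The paper never solves the recursion. It passes to the block times $\mu n$ and writes the two-dimensional stochastic approximation \eqref{Eq: recursion_vector_matrix} for $(S^{D^c}_{\mu n},S^{D}_{\mu n})/n$, with lower-triangular drift matrix $A$. It reads the threshold off the eigenvalue $\eta_2=1-(2p-1)k/\mu$ of $-A$ (Lemma \ref{Lem: eigenvalues_of_A}) and then quotes the limit theorems of \cite{Zh_2016} (Theorems \ref{Thm: CLT_SA_eigenvalue_more_than_half}, \ref{Thm: CLT_SA_for_rho_equals_1/2}, \ref{Thm: CLT_SA_for_rho_leq_1/2}). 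All the work goes into their hypotheses:
\begin{itemize}
\item the block conditional means (Lemma \ref{lem:alpha_beta});
\item the limit $\Gamma=(1-\eta^2)\,\mathrm{diag}(k,\mu-k)$ (Lemma \ref{lem:Gamma});
\item the $O(1/n)$ error rate (Lemma \ref{Lemma: error_diffusive_case}), which is where $\mu_n=\mu n$ is used.
\end{itemize}
You instead solve the scalar recursion for $M_n$ explicitly, in the spirit of the martingale approach of \cite{Be_2018}. The same exponent appears as $a_n\sim c\,n^{\alpha}$ with $\alpha=1-\eta_2$. The arithmetic-progression hypothesis enters through the bounded partial sums of $\bone_{\{i+1\in D^c\}}-k/\mu$ rather than through error rates. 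The paper's route is shorter because all three regimes are handled by citation. Yours costs explicit estimates on the weights $c_{n,i}$ and a Toeplitz averaging, but it is self-contained, exhibits $\xi$ through $\lim W_n$, and produces the constants explicitly.

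Concretely, your weights are asymptotically $\frac{\mu}{k}(n/i)^{\alpha}-\frac{\mu-k}{k}$ on $D^c$ and $1$ on $D$. So for $\alpha<\frac12$ the variance per unit time is $(1-s^{*2})\bigl(\frac{k}{\mu}\int_0^1(\frac{\mu}{k}x^{-\alpha}-\frac{\mu-k}{k})^2\,dx+\frac{\mu-k}{\mu}\bigr)$. This equals $\frac1\mu$ times the sum of the entries of $\int_0^\infty e^{Bu}\Gamma e^{B^\top u}\,du$ with $B=A+\frac12 I$. That is the correct orientation for the column recursion \eqref{Eq: recursion_vector_matrix}, whereas Appendix B integrates $e^{B^\top u}\Gamma e^{Bu}$. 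For instance, with $\mu=2$, $k=1$, $p=0$, $\lambda=\frac12$, your weights give $\mathrm{Var}(S_{2n})\sim\frac43 n$, against the value $\Sigma=1$ of Proposition \ref{prop: limiting variance}. This does not affect the theorem, which only asserts that the constants exist, but your route gives a useful check on them.

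A few small points to tidy up:
\begin{itemize}
\item The conditional variances converge to a single constant. Since $\mu\eta=(2p-1)ks^*+(2\lambda-1)(\mu-k)=\mu s^*$, you get $\E[X_{i+1}\mid\mathcal F_i]\to s^*$ in every residue class, so the limit is $1-s^{*2}$ and only the weights oscillate. However, this uses $M_i/i\to ks^*/\mu$, which is \eqref{Eq: SLLN_for_vector_S_mu_n} (or follows from your Step 1), not Theorem \ref{Thm: SLLN_renewal_chain} alone.
\item Lindeberg needs $\max_i|c_{n,i}|=O(n^{\max(\alpha,0)})$ to be $o(\sqrt n)$ (resp.\ $o(\sqrt{n\log n})$ at $\alpha=\frac12$), not just bounded increments. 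This does hold.
\item If $p=0$ and $k\ge 2$, then $1+\gamma_1=0$, so start the product $a_n$ at $i=2$.
\item The deterministic remainder is $O(n^{\alpha}+\log n)$ rather than $O(n^{\alpha})+O(1)$. This is harmless.
\item Your variance carries the factor $1-s^{*2}$, which vanishes when $p=1$ and $\lambda\in\{0,1\}$. The word ``positive'' in the statement needs that exclusion, as the paper itself observes at the end of the proof of Lemma \ref{lem:Gamma}.
\end{itemize}
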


\begin{corollary}
\label{Cor: Anomolous_diffusion}
Let $S_n$ be a TMERW. Let the collection of sets $\DD$ be a nested sequence generated according to an arithmetic progression. Then, the superdiffuisive regime is observed only when $\lim_{n \to \infty}\frac{\lvert D_n^c \rvert}{n}>\frac{1}{2}$.
\end{corollary}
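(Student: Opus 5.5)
This is a direct consequence of Theorem \ref{Thm: anamolous_diffusion_Dn_non_trivial}. The plan has two parts: compute $\lim_{n\to\infty}|D_n^c|/n$ explicitly for the arithmetic progression, and then read off from the three cases of that theorem when a superdiffusive scaling $n^{1-\rho^*}$ with $1-\rho^*>1/2$ can occur.

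\emph{Computing the limit.} With $\tau\equiv k$ and $\sigma\equiv\mu-k$, Definition \ref{Def: Alt_def_DD} gives $\mu_j = j\mu$. Each block $(\mu_j,\mu_{j+1}]$ contributes exactly $k$ indices to $D^c$ and $\mu-k$ indices to $D$. Hence, for $n\in[\mu_j,\mu_{j+1})$,
\[
jk \le |D^c_n| \le (j+1)k,
\]
and dividing by $n\sim j\mu$ yields $|D^c_n|/n\to k/\mu$. The same limit also follows from Lemma \ref{Lem: SLLN for D}, since $\E[\tau]/(\E[\tau]+\E[\sigma]) = k/\mu$. So the condition $\lim_n |D_n^c|/n>1/2$ is exactly $k>\mu/2$.

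\emph{Reading off the regimes.} We match the three cases of Theorem \ref{Thm: anamolous_diffusion_Dn_non_trivial}.

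\begin{enumerate}[(a)]
\item If $k<\mu/2$, part iii gives Gaussian fluctuations at scale $\sqrt n$ for every $p\in[0,1]$, so there is no superdiffusive regime.
\item If $k=\mu/2$, part ii gives scale $\sqrt n$ for $p<1$ and $\sqrt{n\log n}$ for $p=1$. These are diffusive and critical only, never a polynomial power above $1/2$. Consistently, $1-\rho^* = (2p-1)k/\mu = (2p-1)/2 \le 1/2$ here.
\item If $k>\mu/2$, then $\frac12+\frac{\mu}{4k}<1$, so the interval $\bigl(\frac12+\frac{\mu}{4k},1\bigr]$ is nonempty. For $p$ in this interval, part i.c gives scaling $n^{1-\rho^*}$. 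Moreover, $1-\rho^*=(2p-1)k/\mu>1/2$ is equivalent to $p>\frac12+\frac{\mu}{4k}$. So the superdiffusive regime is genuinely observed in this case.
\end{enumerate}

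Combining the cases, superdiffusion occurs if and only if $k/\mu>1/2$, which by the first step is $\lim_n|D^c_n|/n>1/2$.

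The only mildly delicate point is interpretive. One has to argue that in cases (a) and (b) the walk is not superdiffusive, and Theorem \ref{Thm: anamolous_diffusion_Dn_non_trivial} supplies exactly this through its nondegenerate Gaussian limits at scale $\sqrt n$ or $\sqrt{n\log n}$ (with $\Sigma,\widetilde\Sigma>0$). For $p=1$ with $k=\mu/2$, the $\sqrt{n\log n}$ scale is critical, not superdiffusive. Otherwise the corollary is a bookkeeping consequence of the theorem.
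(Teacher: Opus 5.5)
Your proposal is correct and takes essentially the same route as the paper: count $k$ indices of $D^c$ per block of length $\mu$ (the paper writes $n = k(n)\mu + b(n)$ and $|D_n^c| = k(n)k + \min\{k,b(n)\}$) to get $|D_n^c|/n \to k/\mu$, then read the conclusion off the three cases of Theorem \ref{Thm: anamolous_diffusion_Dn_non_trivial}. Your explicit check that $\frac12+\frac{\mu}{4k}<1$ exactly when $k>\mu/2$, so that the superdiffusive window is nonempty, is a detail the paper leaves implicit.
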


\begin{proposition}\label{prop: limiting variance}
Let $\eta = \frac{(2p-1)ks^*+(2\lambda-1)(\mu-k)}{\mu}$. Let $\Sigma$ and $\widetilde{\Sigma}$ be as in Theorem \ref{Thm: anamolous_diffusion_Dn_non_trivial}. Then \[\Sigma = (1-\eta^2)\left(\frac{k+ \dfrac{(\mu-k)^3}{k^2}}{1-2(2p-1)\dfrac{k}{\mu}} + \frac{\frac{2(\mu-k)^2}{k}-\frac{2(\mu-k)^3}{k^2}}{\left(1-(2p-1)\dfrac{k}{\mu}\right)}+\frac{(\mu-k)^3}{k^2} - \frac{2(\mu-k)^2}{k} + \mu-k \right),\] and 
\[\widetilde{\Sigma} = k + \frac{(\mu-k)^3}{k^2}.\]
Furthermore, when $k = \frac{\mu}{2}$, these simplify into $\Sigma = \frac{\mu}{2-2p}$ and $\widetilde{\Sigma} = \mu. $
\end{proposition}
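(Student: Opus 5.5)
The plan is to write $S_n-ns^*$ as a weighted sum of martingale differences, with weights that can be computed explicitly because $\DD$ is periodic. Once the weights are known, the variance formulas follow from the martingale CLT used to prove Theorem~\ref{Thm: anamolous_diffusion_Dn_non_trivial}.

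\textbf{Decomposition.} Split the walk into its two restricted components. Let $A_n=\sum_{j\in D_n^c}X_j$ and $B_n=\sum_{j\in D_n}X_j$, so that $S_n=A_n+B_n$. Let $\mathcal F_n$ be the natural filtration. From \eqref{Eq:Choice_of_step-sizes},
\[
\E[X_{n+1}\mid\mathcal F_n]=(2p-1)\frac{A_n}{n}+(2\lambda-1)\frac{|D_n|}{n}.
\]
Set $\varepsilon_{n+1}=X_{n+1}-\E[X_{n+1}\mid\mathcal F_n]$. Its conditional variance is $1-\E[X_{n+1}\mid\mathcal F_n]^2$. By Theorem~\ref{Thm: SLLN_renewal_chain}, $A_n/n\to ks^*/\mu$, and $|D_n|/n\to(\mu-k)/\mu$ deterministically, up to $O(1/n)$ errors because the sets are periodic. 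Hence the conditional mean converges a.s. to $\eta$ as defined in the statement, and the conditional variance converges to $1-\eta^2$. This accounts for the common prefactor $1-\eta^2$ in $\Sigma$.

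\textbf{Weights.} The only random feedback runs through $A$. The recursion is
\[
A_{i+1}=A_i+\bone_{\{i+1\in D^c\}}\Bigl((2p-1)\frac{A_i}{i}+(2\lambda-1)\frac{|D_i|}{i}+\varepsilon_{i+1}\Bigr).
\]
The corresponding $B$ increments pick up $(2p-1)A_i/i$ at the times $i+1\in D$. Put $\theta=(2p-1)k/\mu=1-\rho^*$.

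A noise $\varepsilon_j$ with $j\in D$ enters $S_n$ with weight $1$ only.

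A noise $\varepsilon_j$ with $j\in D^c$ propagates inside $A$ through the product $\prod_{i\in D^c,\,j\le i<n}(1+(2p-1)/i)$. I will show this product equals $(n/j)^{\theta}(1+o(1))$ by averaging over periods; the error is a uniform $O(1/j)$ since the period is fixed. Its contribution to $B$ is $(2p-1)\sum_{i+1\in D,\, j\le i<n}(i/j)^\theta/i$. Averaging gives
\[
\frac{\mu-k}{\mu}\cdot\frac{2p-1}{\theta}\bigl((n/j)^\theta-1\bigr)=\frac{\mu-k}{k}\bigl((n/j)^\theta-1\bigr).
\]
So the total weight of $\varepsilon_j$ in $S_n$ is $w_{n,j}=\frac{\mu}{k}(n/j)^\theta-\frac{\mu-k}{k}$ for $j\in D^c$. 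The deterministic drift parts reproduce $ns^*+o(\sqrt n)$; this is a linear fixed-point computation consistent with Theorem~\ref{Thm: SLLN_renewal_chain}.

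\textbf{Variance computation.} Next I compute $\sum_{j\le n}w_{n,j}^2\,\E[\varepsilon_j^2\mid\mathcal F_{j-1}]$. The indices $j\in D^c$ have density $k/\mu$, and $\sum_j (n/j)^{2\theta}\sim n/(1-2\theta)$ and $\sum_j (n/j)^{\theta}\sim n/(1-\theta)$ when $\theta<1/2$. Expanding $w_{n,j}^2$ therefore gives three pieces: a $(n/j)^{2\theta}$ term with denominator $1-2\theta$, a cross term with denominator $1-\theta$, and constant terms. The $D$ indices add the constant $(\mu-k)/\mu$ per unit time. Then $\frac1n\sum_j w_{n,j}^2 \to \Sigma$, after matching the paper's per-period normalization and regrouping the coefficients into the displayed form with $k$, $(\mu-k)^2/k$ and $(\mu-k)^3/k^2$. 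I will check this regrouping by direct algebra, tracking whether the paper normalizes by blocks of length $\mu$.

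In the critical case $2\theta=1$, the sum $\sum_j(n/j)^{2\theta}$ becomes $n\log n$, and only the leading coefficient survives in $\widetilde\Sigma$. The conditional variance there should simplify because $s^*$ and $\eta$ take specific values at criticality; I will verify that $1-\eta^2$ combines to give exactly $k+(\mu-k)^3/k^2$. For $k=\mu/2$ the coefficients collapse: $(\mu-k)/k=1$ and $\theta=(2p-1)/2$. Substituting should yield $\mu/(2-2p)$, and $\mu$ when $p=1$.

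\textbf{Main obstacle.} The hardest step is making the averaging rigorous: replacing the products and sums over $D^c$ and $D$ indices by their continuous analogues with errors that are $o(n)$ after squaring and summing (respectively $o(n\log n)$ at criticality). I would handle this by grouping indices into full periods $[m\mu,(m+1)\mu)$. On each period the product is $(1+\theta/m)(1+O(m^{-2}))$, which gives a two-dimensional stochastic-approximation recursion at times $m\mu$ with linear mean field. I would then apply the martingale CLT with Lindeberg's condition, which is immediate since the increments are bounded, to this block recursion.
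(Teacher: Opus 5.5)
Your representation is sound, and your weights are right. Since $\DD$ is deterministic, the pair $(S_n^{D^c},S_n^{D})$ satisfies a linear recursion with deterministic coefficients. Hence $S_n$ is exactly a deterministic drift plus $\sum_j w_{n,j}\varepsilon_j$ with deterministic weights, and your formulas capture these up to relative errors $O(1/j)$. This is a more elementary route than the paper's. The paper applies the stochastic-approximation CLT to the block recursion \eqref{Eq: recursion_vector_matrix} and then sums the entries of $\int_0^\infty e^{B^\top u}\Gamma e^{Bu}\,du$, with $B=A+\frac12 I$ (Appendix B).

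The gap is the step you defer: the regrouping into the displayed form cannot succeed. With $\theta=(2p-1)k/\mu$, your sum gives, per block of length $\mu$,
\[
(1-\eta^2)\left(\frac{\mu^2}{k(1-2\theta)}-\frac{2\mu(\mu-k)}{k(1-\theta)}+\frac{\mu(\mu-k)}{k}\right).
\]
Divide by $\mu$ for the normalisation $\frac{S_n-ns^*}{\sqrt n}$ of Theorem \ref{Thm: anamolous_diffusion_Dn_non_trivial}. The coefficient of $1/(1-2\theta)$ here is $(1-\eta^2)\mu^2/k$, not $(1-\eta^2)\bigl(k+(\mu-k)^3/k^2\bigr)$. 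Since $1/(1-2\theta)$, $1/(1-\theta)$ and $1$ are linearly independent functions of $p$, no algebra reconciles the two expressions.

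The other deferred steps fail the same way. At criticality you get $(1-\eta^2)\mu^2/k$, and the factor $1-\eta^2$ cannot collapse: inserting the definition of $s^*$ shows $\eta=s^*$ identically. At criticality this equals $2(2\lambda-1)(\mu-k)/\mu$, which varies freely with $\lambda$. For $k=\mu/2$ you get $(1-\eta^2)\mu\bigl(1+\frac{2p-1}{(1-p)(3-2p)}\bigr)$, and at $p=1$ you get $2\mu(1-\eta^2)$. These are not $\frac{\mu}{2-2p}$ and $\mu$.

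The mismatch originates in the paper, not in your weights. The matrix $A$ in \eqref{Eq: matrix_A} acts on column vectors, whereas \eqref{Eq: variance_covariance_matrix_eigenvalue_more_than_half} is stated for row vectors. In column form the limiting covariance is $\int_0^\infty e^{Bu}\Gamma e^{B^\top u}\,du$, and
\[
\mathbf{1}^\top e^{Bu}\Gamma e^{B^\top u}\mathbf{1}=(1-\eta^2)\Bigl[k\Bigl(\frac{\mu}{k}e^{(\theta-\frac12)u}-\frac{\mu-k}{k}e^{-u/2}\Bigr)^2+(\mu-k)e^{-u}\Bigr].
\]
This integrates to the expression above and is exactly the continuum version of your $\sum_j w_{n,j}^2$. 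Appendix B instead sums the entries of $e^{B^\top u}\Gamma e^{Bu}$, and in case II it also drops the factor $1-\eta^2$.

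An exact check confirms this. Take $\lambda=\frac12$ (so $\eta=0$), $\mu=2$, $k=1$, $p=0$. The second-moment recursions give $\E[(S_n^{D^c})^2]\sim n/4$, $\E[S_nS_n^{D^c}]\sim n/6$ and $\E[S_n^2]\sim 2n/3$, that is, $4/3$ per block, as your formula predicts. The stated value $\frac{\mu}{2-2p}=1$ fits neither normalisation.

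So your plan, carried out correctly, proves a corrected version of the proposition. It cannot prove the formulas as stated, and your ``I will check'' and ``should yield'' steps are exactly where it breaks.
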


Theorem \ref{Thm: anamolous_diffusion_Dn_non_trivial} demonstrates that the phase transition into the superdiffusive regime is observed only if the walk has a chance of performing a classical ERW more often than the simple random walk, whereas otherwise there is only the diffusive regime (unless $p=1$). This suggests that by appropriately choosing the memory to sample from, we can make the walk behave closer to a classical ERW or a simple random walk. Furthermore, observe that since $\mu \ge k$, the critical $p_c=\frac{1}{2}+\frac{\mu}{4k} \ge \frac{3}{4}$, which implies the diffusive regime is wider for TMERW as opposed to the classical ERW, where we know the critical parameter is at $p_c= \frac{3}{4}$.  

\subsection{Overview and discussion}

As observed earlier, when the size of the set $D_n$ is non-trivial, Theorem \ref{Thm: anamolous_diffusion_Dn_non_trivial} indicates that there is some sort of \textit{competition} between the classical ERW and the simple random walk, i.e. when the size of $D_n$ is trivial, the walk is performing mostly an ERW. This implies that the growth rates of $D_n$ assumed in Theorem \ref{Thm: CLT_D_n/n_goes_to_zero} might not be necessary, and are rather technical upshots of stochastic approximations. Thus, from our simulations (see Figures \ref{fig:img2},\ref{fig:img1},\ref{fig:img4} and \ref{fig:img3}) we conjecture the following:  
\begin{problem}
\label{Conj: Fluctuations_D_n_trivial}
When the size of $D_n$ is trivial, that is, $\lim_{n \to \infty}\frac{|D_n|}{n}=0$, the TMERW behaves qualitatively like a classical ERW, that is, the TMERW demonstrates phase transition into diffusive $p< 3/4$, critial $p=3/4$ and superdiffusive $p>3/4$ regimes irrespective of the size of $D_n$.    
\end{problem}
An immediate open problem related to the fluctuations of the TMERW when the size of $D_n$ is non-trivial is the following.
\begin{problem}
\label{Conj: CLT_general_tau_sigma}
Let $\DD$ be a collection of nested sequences with an underlying renewal structure satisfying assumptions (\textbf{A1}) and (\textbf{A2}). For $\frac{\E [\tau]}{\mu} > \frac{1}{2}$, we expect a phase transition into a superdiffusive regime with criticality at $p_c= \frac{3}{4}+\frac{1}{4} \frac{\E \left[\sigma\right]}{\E \left[\tau \right]}$. 
On the other hand, if $\frac{\E [\tau]}{\mu} < \frac{1}{2}$, then the TMERW will have only a diffusive regime, that is, a scaling of $\OO(\sqrt{n})$ for $0 \le p <1$.  
\end{problem}
Observe that when the size of $D_n$ is trivial we did not require the collection $\DD$ to be nested. This raises an interesting question of whether the sets $D_n$ are nested when the the size of $D_n$ is at all necessary. This is posed below. 
\begin{problem}
\label{Conj: non-nested_D_n_non_trivial}
The phase transition is completely determined by whether $\lim_{n\to \infty}\frac{|D_n^c|}{n}$ is less than, equal to or greater than $\frac{1}{2}$, independently of how the sets are positioned. 
\end{problem}

The Corollary \ref{Cor: recurrence_tansience} shows that the TMERW is transient when $\lambda \neq \frac{1}{2}$, while nothing is known for recurrence and transience in the case when the size of $D_n$ is trivial. Thus, the following question remains open.
\begin{problem}
Is the TMERW recurrent, that is, is $\PP \left(S_n=0 \text{ i.o. }\right)=1$? Or does it display a phase transition between recurrence and transience? If there is a phase transition, then the critical parameter should be jointly determined by $\lim_{n}\frac{|D_n|}{|D_n^c|}$ and $p$. A similar question can be asked in dimensions $d \ge 2$. 
\end{problem}

The model can be naturally extended to the case where the innovation set $D_n$ is partitioned into finitely many subsets $D_n^{(1)},\ldots,D_n^{(m)}$, on each of which the innovations evolve as independent simple random walks with parameters $\lambda_1,\ldots,\lambda_m$. The almost sure limit of $\frac{S_n}{n}$ is then modified according to the parameters $\lambda_i$ and the asymptotic proportions of the corresponding subsets. However, the memory breakpoint for the persistence of the diffusive, critical and superdiffusive phase transition remains unchanged. One may also allow the retained memory $D_n^c$ to be partitioned into finitely many subsets, each governed by an elephant random walk with parameter $p_i$. The resulting stochastic approximation is immediate to derive and can be analysed by the same methods. Since neither extension requires any essentially new ideas, we do not pursue them here.

\section{Structure of the Increments and Restricted Walks}
\label{Sec: techniques}
A key feature of the classical elephant random walk is that the conditional expectation of the next increment depends linearly on the current position of the walker. This one-dimensional structure underlies many of the existing martingale and stochastic approximation arguments. However, in the tampered memory model studied in this paper, this structure is lost because the conditional distribution of the next increment depends on whether the sampled index lies in the tampered or untampered part of the memory. Consequently, the current position of the walker alone no longer determines the conditional drift, since it does not distinguish between the contributions from the tampered and untampered parts of the memory.

To overcome this difficulty, we decompose the walk according to the tampered and untampered portions of the memory. This leads naturally to a two-dimensional stochastic approximation scheme (see \cite{Borkar, Zh_2016} for details on the concepts of stochastic approximation). Let $S_n$ denote the TMERW as defined in \eqref{Def: TERW}. We define  

\begin{equation}
\label{Def: TMERW_Dc}
S_n^{D^c}: = \sum_{j \in D_n^c} X_j,
\end{equation}
to be the TMERW restricted to the set $D_n^c$.
Similarly, define TMERW restricted to the set $D_n$ as 
\begin{equation}
\label{Def: TMERW_D}
S_n^{D}: = \sum_{j \in D_n} X_j.
\end{equation}
By construction, 
\begin{equation}
\label{Eq: S_n_sum_of_restricted_walks}
S_n = S_n^{D^c}+ S_n^{D}.
\end{equation}
Recall that the law of the innovations $Y_n$ is given by  $\PP(Y_n=1)=1-\PP(Y_n=-1)=\lambda$ and $\mathcal{D}$ and $\{Y_{n+1}\}_{n \ge 1}$ are independent of each other. Let $\FF_n = \sigma(\{X_1,\cdots X_n, D_1, \cdots, D_n\})$. Then, it is easy to see that  
\begin{eqnarray}
\label{Eq: conditional_expectation}
      \E[X_{n+1} \ | \ \FF_n] &= &
    \E \left [ \left(pX_{I_{n}}- (1-p)X_{I_{n}}\right)\bone_{\{I_{n} \in D_n^c\}} \ | \ \FF_n \right]+\E [\lambda \bone_{\{I_{n} \in D_n\}}-(1-\lambda) \bone_{\{I_{n} \in D_n\}} | \FF_n] \nonumber \\  
    &=& \frac{(2p-1)}{n}\sum_{j \in D_n^c} X_j + (2\lambda-1) \frac{|D_n|}{n} \nonumber \\ 
    &=& (2p-1) \frac{S_n^{{\tiny \DD}^c}}{n} + (2\lambda-1) \frac{|D_n|}{n}.
\end{eqnarray}
Unlike \cite{coletti2017central}, the conditional expectation of the next increment is not a linear function of the current position $S_n$. Instead, it depends on the contribution of the walk arising from the untampered memory, $S_n^{{\tiny \DD}^c}$ and the size of the tampered set $D_n$. This motivates the study of normalized restricted walks $\dfrac{S_n^{{\tiny \DD}^c}}{n}$ and $\dfrac{S_n^{{\tiny \DD}}}{n}$. Our approach is to set up recursions for stochastic approximation (SA) for the vector $\left(\frac{S_n^{{\tiny \DD}^c}}{n}, \frac{S_n^{{\tiny \DD}}}{n}\right)^\top$.

\section{Proof of the main results when the size of $D_n$ is trivial}
\label{Sec: Proof_D_n_trivial}

Recall from \eqref{Eq: size_D_n_0}, that we say that the size of $D_n$ is trivial, 
when $\frac{|D_n|}{n} \rightarrow 0$ a.s. In this case, we set up and analyse the recursion for stochastic approximation for $\frac{S_n}{n}$. Using \eqref{Eq: conditional_expectation}, and adding and subtracting $\E\left[X_{n+1} | \FF_n \right]$, where $\FF_n = \sigma(\{X_1,\cdots X_n, D_1, \cdots, D_n\})$, we get
\begin{eqnarray*} 
\frac{S_{n+1}}{n+1} &=& \frac{S_n}{n} + \frac{1}{n+1} \left [ X_{n+1} - \frac{S_n}{n} \right] \\
&=& \frac{S_n}{n} + \frac{1}{n+1} \left [(2p-1) \frac{S_n^{{\tiny \DD}^c}}{n} + (2\lambda-1) \frac{|D_n|}{n} - \frac{S_n}{n}\right] + \frac{1}{n+1} \Delta M_{n+1}\\
&=& \frac{S_n}{n} + \frac{1}{n+1} \left [(2p-2) \frac{S_n}{n} + (2\lambda-1) \frac{|D_n|}{n} - (2p-1)\frac{S_n^{D}}{n}\right] + \frac{1}{n+1} \Delta M_{n+1},
\end{eqnarray*}
where $\Delta M_{n+1} = X_{n+1} - \E[ X_{n+1} | \FF_n]$ is a martingale difference.
Let us rewrite the above recursion as in \eqref{Eq: recursion_general_SA}
\begin{equation}
\label{Eq: SA_Dn_to_zero}
\frac{S_{n+1}}{n+1}=\frac{S_n}{n} + \frac{1}{n+1} \left [h\left(\frac{S_n}{n} \right)+  \Delta M_{n+1} +r_{n+1}\right],
\end{equation}
where $h:\Rbold \rightarrow \Rbold$ is defined as $h(\theta)= 2(p-1)\theta$ is the mean-field function, and the error or the remainder term is given by 
\begin{equation}
\label{Eq: error_D_n/n_goes_to_zero}
r_{n+1}= (2\lambda-1) \frac{|D_n|}{n} - (2p-1)\frac{S_n^{D}}{n}.
\end{equation} 

\begin{proof}[Proof of Theorem \ref{Thm: SLLN_non-nested}]
Observe that $h(x)=2(p-1)x$ is a Lipschitz function.
Furthermore, $h^{'}(\theta)= 2(p-1)<0$, for $0 \le p <1$, which implies that Corollary \ref{Cor: SA_SLLN} is satisfied. Also, observe that $\lvert \frac{S_n^{D}}{n}\rvert \le \frac{\lvert D_n \rvert}{n}$, which shows that as $n \to \infty,$
\begin{equation}
\label{Eq: error_D_n_converge_to_zero}
\lvert r_{n+1}\rvert = \mathcal{O}\left(\frac{\lvert D_n \rvert}{n}\right)\rightarrow 0, \text{ a.s.}
\end{equation}
Observe that 
\begin{equation}
\label{Eq: martingale_difference_Dn_n_cgs_zero}
\Delta M_{n+1} = X_{n+1} - \E[ X_{n+1} | \FF_n]= X_{n+1}-(2p-1) \frac{S_n^{{\tiny \DD}^c}}{n} + (2\lambda-1) \frac{|D_n|}{n}
\end{equation}
Since $\frac{\lvert D_n \rvert}{n} \le 1$, and $\frac{\lvert S_n^{{\tiny \DD}^c}\rvert }{n} \le 1$, it is easy to see that 
\begin{equation}\label{Eq: boundedness of martingale trivial case}
    \sup_{n \ge 1} \E[| \Delta M_n |^2 \vert \FF_{n-1}] \le (1+\lvert (2p-1)\rvert + \lvert (2\lambda-1)\rvert)^2 < \infty, \text{ a.s.}
\end{equation} 
Therefore, from Theorem \ref{Thm: SA_SLLN}, we know that 
$\frac{S_n}{n} \rightarrow x^*$, where $x^*$ solves the equation $h(x)=0$. We observe that the only solution for $h(x)=0$, is $x^{*}=0$. This shows that as $n \to \infty$
\[
\frac{S_n}{n} \rightarrow 0, \text{ a.s.}
\]
\end{proof}
Next, we present the proof of the fluctuation results. 
\begin{proof}[Proof of Theorem \ref{Thm: CLT_D_n/n_goes_to_zero}]
Recall that the mean field function $h$ as in \eqref{Eq: SA_Dn_to_zero} is given by 
\[
h(\theta)= 2 (p-1) \theta, 
\]
and hence 
\begin{equation}
\label{Eq: eigenvalue_D_n/n_goes_to_zero}
-h'(\theta)= 2(1-p)
\end{equation}
Furthermore, recall that from \eqref{Eq: error_D_n_converge_to_zero} the error is given by 
\begin{equation}
\label{Eq: rate_of_error_D_n_goes_to_zero}
r_{n+1}= (2\lambda-1) \frac{|D_n|}{n} - (2p-1)\frac{S_n^{D}}{n} = \OO\left(\frac{\lvert D_n\rvert}{n}\right),
\end{equation}
i.e. there exists a uniform constant $C>0$, such that $|r_{n+1}| \le C \frac{\lvert D_n\rvert}{n}, \text{ a.s.}$ and the martingale difference is given by
\begin{equation}
\label{Eq: def_martingale_difference_D_n/n_converges_zero}
\Delta M_{n+1} = X_{n+1} - \E[ X_{n+1} | \FF_n]= X_{n+1}-(2p-1) \frac{S_n^{{\tiny \DD}^c}}{n} + (2\lambda-1) \frac{|D_n|}{n}.
\end{equation}

\textbf{(i)} To prove (i), observe that from \eqref{Eq: eigenvalue_D_n/n_goes_to_zero} $\rho^*=- h'(\theta)= 2(1-p)> \frac{1}{2} $, if $p < 3/4$. Therefore, we have to use Theorem \ref{Thm: CLT_SA_eigenvalue_more_than_half}. Furthermore, using \eqref{Eq: rate_of_error_D_n_goes_to_zero} and 
$\frac{|D_n|}{\sqrt{n}}\stackrel{L^2}{\longrightarrow} 0 $, we have
\begin{equation*}
\label{Eq:rate_of_error_D_n_goes_to_zero_subdiffusive}
(n+1) \E \left[r_{n+1}^2\right]\le C \E \left[\frac{\lvert D_n \rvert^2}{n}\right]\longrightarrow 0, \text{ as } n \to \infty. 
\end{equation*}

Therefore, by Theorem \ref{Thm: CLT_SA_eigenvalue_more_than_half}, we know that in this case 
\begin{equation*}
\frac{S_n}{\sqrt{n}} \stackrel{d}{\longrightarrow }N (0, \sigma^2),
\end{equation*}
where $\sigma^2$ is given by \eqref{Eq: variance_covariance_matrix_eigenvalue_more_than_half}, where $\Gamma = \lim_{n \to \infty}\E \left[ (\Delta M_{n+1})^2 | \ \FF_{n}\right].$
Observe that from \eqref{Eq: SLLN_Dn_goes_to_zero} and the fact that $\lvert \frac{S_n^D}{n}\rvert \leq \frac{\lvert D_n \rvert }{n}\rightarrow 0$, we know that as $n \to \infty$
\[
\frac{S_n^{{\tiny \DD}^c}}{n}= \frac{S_n}{n}- \frac{S_n^D}{n} \rightarrow 0 \text{ a.s.}
\]
Hence, using \eqref{Eq: conditional_expectation} and the definition of $\Delta M_n$ from \eqref{Eq: def_martingale_difference_D_n/n_converges_zero},we get as $n \to \infty$
\begin{equation}
\label{Eq: conditional_expectation_martingale_difference_limit}
\E \left[ (\Delta M_{n+1})^2 | \ \FF_{n}\right]= 1-\left((2p-1) \frac{S_n^{{\tiny \DD}^c}}{n} + (2\lambda-1) \frac{|D_n|}{n}\right)^2 \rightarrow 1, \text{ a.s.} 
\end{equation} 
Now using \eqref{Eq: variance_covariance_matrix_eigenvalue_more_than_half}, and the fact that $\Gamma=1$, we obtain
\[
\sigma^2= \int_{0}^{\infty}{\left(e^{-2(2(1-p)-\frac{1}{2})u}\right)\mathrm{d}u}= \frac{1}{2(\frac{3}{2}-2p)}=\frac{1}{3-4p}.
\]
This completes the proof of (i).

\noindent{\textbf{(ii)}} To prove (ii), we first observe that in this case $p=\frac{3}{4}$ implies $\rho^*=-h'(\theta)= 2(1-p)= \frac{1}{2}$. Hence, we need to verify the conditions in Theorem \ref{Thm: CLT_SA_for_rho_equals_1/2}. Recall that in our case the mean field function is given by $h(\theta)= 2(p-1)\theta$, which shows that \eqref{Eq: Taylor_expnasion_for_theta} is satisfied. To verify the corresponding version of the Lindeberg-Feller condition given in \eqref{Eq: Decay_martingale_difference_rho_half}, observe that the martingale difference $\Delta M_n$ is given by \eqref{Eq: def_martingale_difference_D_n/n_converges_zero}. So 
\begin{equation}
\label{Eq: bound_on_martingale_difference}
\sup_{n \ge 1} \lvert \Delta M_n \rvert \le 1 +\lvert 2p-1 \rvert + \lvert 2 \lambda -1 \rvert, \text{ a.s.}
\end{equation}
Hence, for any given $\epsilon>0$, $\sup_{m \ge 1} \E\left[( \Delta M_m )^2 \mathbf{1}_{\{(\Delta M_m)^2 \ge \epsilon \sqrt{n}\}}\mid  \FF_{m-1}\right] \rightarrow 0, \text{ a.s.}$ as $n \to \infty$. This shows that as $n \to \infty$
\[
\frac{1}{n} \sum_{m=1}^n \E\left[(\Delta M_m) ^2 \mathbf{1}_{\{\lvert \lvert \Delta M_m\rvert \rvert \ge \epsilon \sqrt{n}\}}\mid  \FF_{m-1}\right]\rightarrow 0, \text{ a.s.}
\]
To verify \eqref{Eq: sum_of_rn}, observe that using \eqref{Eq: rate_of_error_D_n_goes_to_zero}
\[
\sum_{k=1}^n r_k= \OO (\sum_{k=1}^n\frac{\lvert D_k \rvert }{k})=\OO (\sum_{k=1}^n k^{\gamma-1})=\OO(n^{\gamma})=o\left(\sqrt{\frac{n}{\log n}}\right), \text{ a.s.}, 
\]
since $ \lvert D_n \rvert = \mathcal{O}(n^{\gamma})$ a.s. where $\gamma< \frac{1}{2}$. This shows that using Theorem \ref{Thm: CLT_SA_for_rho_equals_1/2} 
\[
\frac{S_n}{\sqrt{n \log n }}\stackrel{d}{\longrightarrow} N (0,\widetilde{\sigma}^2),
\]
where $\widetilde{\sigma}^2$ is given \eqref{Eq: Description_tilde_Sigma}.
To find $\Gamma$ as in \eqref{Eq: Gamma_as_limit}, we obtain using \eqref{Eq: conditional_expectation_martingale_difference_limit},
\[
\frac{1}{n} \sum_{m=1}^n \E \left[\Delta M_m \left(\Delta M_m\right)^\top \mid \FF_{m-1}\right]\rightarrow 1 , \text { a.s.}, 
\]
which shows that in this case $\Gamma=1$. Therefore, to find $\widetilde{\sigma}^2$ we use \eqref{Eq: Description_tilde_Sigma} and $p=\frac{3}{4}$,  to show that 
\[
\widetilde{\sigma}^2= \lim_{n \to \infty}\frac{1}{\log n}\int_{0}^{\log n} { \mathrm{d}u} =1.
\]
This completes the proof of (ii).

\noindent(iii) In this case, we observe that $-h'(\theta)= 2(1-p) < \frac{1}{2}$, since we assumed $p> 3/4$. Therefore, we have to verify the conditions of Theorem \ref{Thm: CLT_SA_for_rho_leq_1/2}. To do so, firstly observe that  \eqref{Eq: bound_on_martingale_difference} shows that for a suitable constant $C>0$, 
$\sum_{m=1}^n \E \left[(\Delta M_m)^2 \mid \FF_{m-1}\right]  \le C n , \text{ a.s.}$ which verifies 
\eqref{Eq:_second_moment_martingale_difference_rho_leq_half}. 
To verify 
\eqref{Eq: error_rate_rho_leq_half}, observe that since $1- \rho^*= 2p-1$, and $\gamma< 2p-1$, there exists a $\delta_0>0$, such that, $ \delta_0 < 2p-1-\gamma$. For this choice of $\delta_0>0$, it is easy to see using  \eqref{Eq: rate_of_error_D_n_goes_to_zero}
\begin{equation}
\sum_{k=1}^n r_k= \OO (\sum_{k=1}^n\frac{\lvert D_k \rvert }{k})=\OO (\sum_{k=1}^n k^{\gamma-1})=\OO(n^{\gamma}) = o(n^{1-\rho^*-\delta_0}).
\end{equation}

This shows that there exists an $L$ such that \eqref{Eq: CLT_D_n_to_zero_p_ge_3/4} holds. This completes the proof for (iii).
\end{proof}

\section{Proofs of the main results when the size of $ D_n$ is non-trivial} \label{sec: Proofs_D_n_non-trivial}
To present the proofs of the main results, we require some preliminary observations for the collection $\DD$. These we present next. 
\subsection{Auxiliary results for $\DD$}
\label{SubSubsec: Auxiliary results for the collection D}
Throughout this section, $\DD$ will be a collection of nested sequences with renewal structure, that is, the Assumptions (\textbf{A1}) and (\textbf{A2}) hold.
\begin{lemma}
\label{Lem: tau_sigma_positive}
Let $\tau_n$ and $\sigma_n$ as defined in equations \eqref{Eq: Def_tau_1}- \eqref{Eq: Def_sigma_k+1}. Then, for all $n \ge 1$,
\begin{equation}
\label{Eq: tau_greater_than_1}
\PP \left(\tau_n \ge 1\right)=1,
\end{equation}
and 
\begin{equation}
\label{Eq: sigma_greater_than_1}
\PP \left(\sigma_n \ge 1\right)=1.
\end{equation}
Furthermore, 
\begin{equation}
\label{Eq: mu_greater_than_n}
\PP \left(\mu_n \ge n\right)=1.
\end{equation}
\end{lemma}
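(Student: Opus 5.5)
We argue directly from the definitions \eqref{Eq: Def_tau_1}--\eqref{Eq: Def_sigma_k+1}, using (\textbf{A1})--(\textbf{A2}) only to rule out infinite values. Assumption (\textbf{A2}) gives $\E[e^{t\tau}]<\infty$ and $\E[e^{t\sigma}]<\infty$ for small $t>0$. Hence $\tau,\sigma<\infty$ a.s., and by (\textbf{A1}) all $\tau_n,\sigma_n$ are a.s. finite. Consequently, almost surely $\mu_k<\infty$ for every $k$, so the degenerate conventions ($\tau_{k+1}=0$ when $\mu_k=\infty$, $\sigma_{k+1}=0$ when $\tau_{k+1}=\infty$) never occur. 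We work on this probability-one event.

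\textbf{Positivity of $\tau_n$.} For $\tau_1$, the infimum in \eqref{Eq: Def_tau_1} is over $n\ge 1$, so $\tau_1\ge 1$ trivially. For $k\ge1$, \eqref{Eq: Def_tau_k+1} takes the infimum over $n>\mu_k$, so $n\ge \mu_k+1$ and $\tau_{k+1}\ge 1$. Note that $L_{\mu_k+1}=1$ by definition of $\sigma_k$, which is consistent with $\tau_{k+1}$ measuring the sojourn in state $1$.

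\textbf{Positivity of $\sigma_n$.} By \eqref{Eq: Def_sigma_1} and \eqref{Eq: Def_sigma_k+1}, the infimum is over $n>\mu_{k-1}+\tau_k$, so $\sigma_k\ge1$ whenever $\tau_k<\infty$. This holds a.s. by the argument above, which gives \eqref{Eq: sigma_greater_than_1}.

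\textbf{Bound on $\mu_n$.} Since $\mu_n=\sum_{j=1}^n(\tau_j+\sigma_j)$, the two bounds give $\mu_n\ge 2n\ge n$ a.s., which yields \eqref{Eq: mu_greater_than_n}.

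The only genuine obstacle is that the paper's conventions assign the value $0$ to some $\tau_k$ or $\sigma_k$ in degenerate cases. Positivity therefore must be derived from a.s. finiteness, which is exactly where (\textbf{A2}) enters. The rest is immediate from strict inequalities in the index sets.
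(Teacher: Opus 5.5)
Your proposal is correct and follows essentially the same route as the paper. Assumption (\textbf{A2}) rules out infinite inter-arrival times, so the degenerate zero conventions almost surely never apply, and positivity then comes from the strict inequalities in the index sets; summing gives $\mu_n\ge n$. The only difference is presentational: the paper uses Markov's inequality with $\E[\tau]<\infty$ and decomposes the event $\{\mu_k=\infty\}$, whereas you argue directly from the exponential moment and identical distribution, which is slightly cleaner.
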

\begin{proof}
Observe that by definition we have $\PP\left(\tau_1 \ge  1 \right)=1$.
From the basic definitions of inter-arrival times \eqref{Eq: Def_tau_k+1}, we know that for $k >1$, 
\begin{eqnarray*}
\PP\left(\tau_k= 0\right) 
&= & \PP\left(\mu_k =\infty \right)\\
& = & \sum_{m=1}^{k-1} \PP\left(\mu_m =\infty, \mu_{m-1}< \infty\right).
\end{eqnarray*}
Now observe that 
\[
\PP\left(\mu_m =\infty, \mu_{m-1}< \infty\right)= \PP \left(\tau_{m} = \infty, \mu_{m-1}< \infty\right)+\PP \left(\sigma_{m} = \infty, \mu_{m-1}< \infty\right).
\]
It is easy that from the assumption of i.i.d.\ in (\textbf{A1}), we have 
\begin{equation*}
\PP \left(\tau_{m} = \infty, \mu_{m-1}< \infty\right)
= \PP \left(\tau= \infty\right) \PP \left(\mu_{m-1}< \infty\right).
\end{equation*}
Now we know from assumption (\textbf{A2}), that $\E \left[\tau \right] <\infty$, hence we get using Markov inequality 
\begin{equation*}
\PP \left(\tau= \infty\right)=\lim_{k \to \infty}\PP \left(\tau \ge k \right)\le \lim_{k \to \infty} \frac{1}{k} \E \left[\tau \right] =0.
\end{equation*}
This shows that
\[
\PP \left(\tau_{m} = \infty, \mu_{m-1}< \infty\right)=0.
\]
Similarly, it can be shown that 
\[
\PP \left(\sigma_{m} = \infty, \mu_{m-1}< \infty\right)=0.
\]
This shows that for any $k \ge 1$
\[
\PP\left(\tau_k= 0 \right) =0,
\]
which implies that 
\[
\PP\left(\tau_k \ge 1\right)=1
\]
Similarly, we can show that 
\[
\PP\left(\sigma_k \ge 1\right) =1.
\]
Since, $\mu_n = \sum_{k=1}^n (\tau_k+\sigma_k)$, it follows that $\PP \left(\mu_n \ge n \right)=1$.
\end{proof}
Observe that the sequence of inter-arrival times $\{\tau_n\}_{n \ge 1}$ and $\{\sigma_n\}_{n \ge 1}$ are i.i.d. from assumption (\textbf{A1}). It is easy to see from (\textbf{A2}), that $\E\left[\tau\right] < \infty$, and $\E \left[\sigma\right]<\infty$, which follows from the moment problem (see pg. 296 Section 30 \cite{Bil_2012}). Therefore, it follows that as $n \to \infty$,
\begin{equation}
\label{Eq: SLLN_tau}
\frac{1}{n}\sum_{k=1}^n \tau_k \longrightarrow \E\left[\tau\right], \text{ a.s.},  
\end{equation}
\begin{equation}
\label{Eq: SLLN_sigma}
\frac{1}{n}\sum_{k=1}^n \sigma_k \longrightarrow \E\left[\sigma\right], \text{ a.s.},  
\end{equation}
and 
\begin{equation}
\label{Eq: SLLN_mu}
\frac{\mu_n}{n} \longrightarrow \E\left[\tau\right]+\E\left[\sigma\right], \text{ a.s.}, 
\end{equation}
where we recall that $\mu_n= \mu_{n-1}+\tau_n+\sigma_n$.
\begin{lemma}
\label{Lem: SLLN for D}
Let $\{\tau_n\}_{n \ge 1}$ and $\{\sigma_n\}_{n \ge 1}$ denote the sequence of inter-arrival times for the collection $\DD$, that satisfy (\textbf{A1}) and (\textbf{A2}). Then, as $n \to \infty,$
\begin{equation}
\label{Eq: SLLN_D_c}
\frac{|D^c_{\mu_n}|}{n}
\longrightarrow \E\left[\tau \right] \text{ a.s.}, 
\end{equation}
\begin{equation}
\label{Eq: SLLN_D}
 \frac{|D_{\mu_n}|}{n}\longrightarrow \E\left[\sigma \right] \text{ a.s.}
\end{equation}
\end{lemma}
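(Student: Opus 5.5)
The plan is to read off the sizes of $D^c_{\mu_n}$ and $D_{\mu_n}$ exactly from the block structure in Definition \ref{Def: Alt_def_DD}, and then apply the strong law of large numbers to the i.i.d.\ inter-arrival times.

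\textbf{Step 1: exact identities.} By Lemma \ref{Lem: tau_sigma_positive}, almost surely all $\tau_k,\sigma_k$ are finite and at least $1$, so $\mu_n<\infty$ for every $n$. I will show by induction on $n$ that almost surely
\begin{equation*}
|D^c_{\mu_n}| = \sum_{k=1}^n \tau_k, \qquad |D_{\mu_n}| = \sum_{k=1}^n \sigma_k .
\end{equation*}
For $n=0$ both sides vanish, since $\mu_0=0$ and the sets are empty. For the inductive step, Definition \ref{Def: Alt_def_DD} gives the following on the block $(\mu_n,\mu_{n+1}]$. First, $D^c$ gains exactly the $\tau_{n+1}$ indices $\mu_n+1,\dots,\mu_n+\tau_{n+1}$ while $D$ is unchanged. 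Then $D$ gains exactly the $\sigma_{n+1}$ indices $\mu_n+\tau_{n+1}+1,\dots,\mu_{n+1}$ while $D^c$ is unchanged. Hence $|D^c_{\mu_{n+1}}|=|D^c_{\mu_n}|+\tau_{n+1}$ and $|D_{\mu_{n+1}}|=|D_{\mu_n}|+\sigma_{n+1}$.

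There is a small bookkeeping point at the start. $L_1\equiv1$ places index $1$ in $D^c_1$, and $\tau_1=\inf\{n\ge1:L_{n+1}=0\}$ counts indices $1,\dots,\tau_1$ as lying in $D^c$. I will check that this is consistent with the block $(\mu_0,\mu_1]$, so that the first block contributes exactly $\tau_1$ elements and there is no off-by-one error. A useful consistency check is that $|D_{\mu_n}|+|D^c_{\mu_n}|=\mu_n$, which must hold because $D_m\cup D_m^c=\{1,\dots,m\}$ is a disjoint union.

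\textbf{Step 2: SLLN.} By (\textbf{A1}) the $\tau_k$ are i.i.d., and by (\textbf{A2}) they have finite mean. The strong law then gives \eqref{Eq: SLLN_tau}, namely $\frac1n\sum_{k=1}^n\tau_k\to\E[\tau]$ a.s., and in the same way \eqref{Eq: SLLN_sigma} for the $\sigma_k$. Combining these with the identities from Step 1 yields \eqref{Eq: SLLN_D_c} and \eqref{Eq: SLLN_D}.

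The only real obstacle is the indexing in Step 1, in particular matching the convention $L_1\equiv1$ and the definition of $\tau_1$ to the block description. Everything else is immediate.
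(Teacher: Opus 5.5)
Your proposal is correct and matches the paper's proof: the identities $|D^c_{\mu_n}|=\sum_{k=1}^n\tau_k$ and $|D_{\mu_n}|=\sum_{k=1}^n\sigma_k$ come from the block structure, and the strong law of large numbers for the i.i.d.\ inter-arrival times then finishes the argument. Your induction over blocks, the finiteness remark via Lemma \ref{Lem: tau_sigma_positive}, and the check of the $L_1\equiv 1$ convention make explicit what the paper states in one line without justification.
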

\begin{proof}
Observe that by \eqref{Eq: size_D}, we have $|D^c_{\mu_n}|= \sum_{k=1}^{\mu_n} \bone_{\{k \in D^c_{\mu_n}\}}$ and 
$|D_{\mu_n}| = \sum_{k=1}^{\mu_n} \bone_{\{k \in D_{\mu_n}\}}$. Therefore, it follows that 
\[
|D^c_{\mu_n}|= \sum_{k = 1}^n \tau_k,
\]
and 
\[
|D_{\mu_n}|= \sum_{k = 1}^n \sigma_k.
\]
The rest of the proof now follows from equations \eqref{Eq: SLLN_tau} and \eqref{Eq: SLLN_sigma}.
\end{proof}

\subsection{Proof of Theorem \ref{Thm: SLLN_renewal_chain}}
Our main strategy in the proof of Theorem \ref{Thm: SLLN_renewal_chain} is to prove a similar result for the TMERW $S_{\mu_n}$, which is done in Lemma \ref{Lem: SLLN_for_S_mu_n}, and then we use the renewal structure of the collection $\DD$ to prove Lemma \ref{lem:concludinglemma} to complete the proof. 

In preparation for the proof, in the next few steps, we set the recursion for the stochastic approximation which we will require in the proof of Lemma \ref{Lem: SLLN_for_S_mu_n}. To this end,   
recall that $S_n^{D^c}: = \sum_{j \in D_n^c} X_j$ denotes the TMERW restricted to the sets $D^c_n$ and $S_n^{D}: = \sum_{j \in D_n} X_j$ denotes the corresponding TMERW restricted to $D_n$ as in equations \eqref{Def: TMERW_Dc} and \eqref{Def: TMERW_D}.
We will set up the recursion for the stochastic approximation for both $S_{\mu_n}^{D^c}$, and $S_{\mu_n}^{D}$. Let us write 
\begin{equation*}
\label{Eq: recursion_for_S_on_Dc_renewal}
\frac{S^{D^c}_{\mu_{n+1}}}{n+1}= \frac{S^{D^c}_{\mu_{n}}}{n}+\frac{1}{n+1} \left(\sum_{j=1}^{\tau_{n+1}} X_{\mu_n +j}-\frac{S^{D^c}_{\mu_{n}}}{n}\right).
\end{equation*}
Writing $\GG_n = \sigma (\{X_1, X_2, \ldots, X_{\mu_n}; \tau_1, \tau_2, \ldots \tau_n, \sigma_1, \sigma_2, \ldots, \sigma_n\})$, and adding and subtracting the conditional expectation, we can re-write the above equation as 
\begin{equation}
\label{Eq: recursion_Dc_step1}
\frac{S^{D^c}_{\mu_{n+1}}}{n+1}= \frac{S^{D^c}_{\mu_{n}}}{n}+\frac{1}{n+1} \left[\E\left[\sum_{j=1}^{\tau_{n+1}} X_{\mu_n +j} \Big | \GG_{n}\right]-\frac{S^{D^c}_{\mu_{n}}}{n}+ \Delta M_{\mu_{n+1}}^{D^c} \right].
\end{equation}
where the martingale difference is given by 
\begin{equation}
\label{Eq: martingale_diff_D_c}
\Delta M_{\mu_{n+1}}^{D^c}:= \sum_{j=1}^{\tau_{n+1}} X_{\mu_n +j}- \E \left[\sum_{j=1}^{\tau_{n+1}} X_{\mu_n +j} \Big | \GG_n\right].
\end{equation}
Similarly, we write the following recursion for $S^{D}_{\mu_{n+1}}$ by 
\begin{equation}
\label{Eq: recursion_D_step1}
\frac{S^{D}_{\mu_{n+1}}}{n+1}= \frac{S^{D}_{\mu_{n}}}{n}+\frac{1}{n+1} \left[\E\left[\sum_{j=\tau_{n+1}+1}^{\tau_{n+1}+\sigma_{n+1}} X_{\mu_n +j} \Big | \GG_{n}\right]-\frac{S^{D}_{\mu_{n}}}{n}+ \Delta M_{\mu_{n+1}}^{D} \right].
\end{equation}
where the martingale difference is given by 
\begin{equation}
\label{Eq: martingale_diff_D}
\Delta M_{\mu_{n+1}}^{D}:= \sum_{j=\tau_{n+1}+1}^{\tau_{n+1}+\sigma_{n+1}} X_{\mu_n +j}- \E \left[\sum_{j=\tau_{n+1}+1}^{\tau_{n+1}+\sigma_{n+1}} X_{\mu_n +j}\Big | \GG_n\right].
\end{equation}
\begin{lemma} 
\label{lem:alpha_beta}
Suppose (\textbf{A1}) and (\textbf{A2}). Then, there exist $\alpha_{\mu_n}^{D^c}, \beta_{\mu_n}^{D^c}, \alpha_{\mu_n}^{D}$ and $\beta_{\mu_n}^{D}$,  
such that, almost surely, 
\begin{equation}
\label{condE_D^c} 
\E\left[ \sum\limits_{i=1}^{\tau_{n+1}} X_{\mu_{n} + i} \Big \vert \GG_n \right] =  \E[\alpha_{\mu_n}^{D^c} \vert \GG_n] \frac{S_{\mu_n}^{D^c}}{n} + \E[\beta_{\mu_n}^{D^c} \vert \GG_n], 
\end{equation}
and,
\begin{equation} 
\label{condE_D} 
\E\left[ \sum\limits_{i=1}^{\sigma_{n+1}} X_{\mu_{n} + \tau_{n+1} + i} \Big \vert \GG_n \right] =  \E[\alpha_{\mu_n}^{D} \vert \GG_n] \frac{S_{\mu_n}^{\sDD^c}}{n} + \E[\beta_{\mu_n}^{D} \vert \GG_n], 
\end{equation}
where
\begin{equation}
\label{Eq: limit_of_alpha_beta_D_c}
\E[\alpha_{\mu_n}^{D^c} \vert \GG_{n}] = (2p-1)\frac{\E[\tau]n}{\mu_n}+ \OO\left(\frac{1}{n}\right),\, \E[\beta_{\mu_n}^{D^c} \vert \GG_{n}] = (2\lambda-1)\frac{\E[\tau]|D_{\mu_n}|}{\mu_n} + \OO\left(\frac{1}{n}\right)
\end{equation}
and 
\begin{equation}
\label{Eq: Eq: limit_of_alpha_beta_D}
\E[\alpha_{\mu_n}^{D} \vert \GG_{n}] = (2p-1)\frac{\E[\sigma]n}{\mu_n} + \OO\left(\frac{1}{n}\right)\, \text{ and } 
\E[\beta_{\mu_n}^{D} \vert \GG_{n}] = (2\lambda-1)\frac{\E[\sigma]|D_{\mu_n}|}{\mu_n} + 
\OO\left(\frac{1}{n}\right),
\end{equation}
where the constant in $\OO \left(\frac{1}{n}\right)$ is uniform and deterministic almost surely. 

Furthermore, almost surely, 
 \begin{equation} 
 \label{eq:alpha_beta_limits}
 \lim_{n \to \infty} \E[\alpha_{\mu_n}^{D^c} \vert \GG_{n}]=(2p-1)\frac{\E[\tau]}{\mu}, \; \lim_{n \to \infty}\E[\beta_{\mu_n}^{D^c} \vert \GG_{n}]=(2\lambda-1)\frac{\E \left[\sigma \right]\E[\tau]}{\mu},  
 \end{equation} 
 and 
 \begin{equation}
 \label{Eq: alpha_beta_D_lim}
 \lim_{n \to \infty}\E[\alpha_{\mu_n}^{D} \vert \GG_n]= (2p-1)\frac{\E[\sigma]}{\mu}, \;
 \lim_{n \to \infty} \E[\beta_{\mu_n}^{D} \vert \GG_{n}]= (2\lambda-1)\frac{\E^2[\sigma]}{\mu}.
\end{equation}

\end{lemma}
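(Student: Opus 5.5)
We propose to compute the conditional expectation of the block sum step by step inside the $(n+1)$-th renewal cycle. We condition first on $\GG_n$ and on the cycle lengths $\tau_{n+1},\sigma_{n+1}$, which are independent of $\GG_n$ by (\textbf{A1}). Then we average over their laws, which is what produces the $\E[\tau]$ and $\E[\sigma]$ factors.

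Fix $\tau_{n+1}=t$. For $1\le i\le t$, the index $\mu_n+i$ lies in $D^c$. During the $\tau$-stretch no new element of $D$ is created, so $|D_{\mu_n+i-1}|=|D_{\mu_n}|$. By \eqref{Eq: conditional_expectation}, with $m=\mu_n+i-1$,
\[
\E[X_{m+1}\mid \FF_m]=(2p-1)\frac{S^{D^c}_m}{m}+(2\lambda-1)\frac{|D_{\mu_n}|}{m},\qquad S^{D^c}_m=S^{D^c}_{\mu_n}+\sum_{j=1}^{i-1}X_{\mu_n+j}.
\]
Write $a_i:=\E[X_{\mu_n+i}\mid \GG_n,\tau_{n+1}]$ and take conditional expectations. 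This gives the linear recursion
\[
a_i=\frac{(2p-1)\big(S^{D^c}_{\mu_n}+\sum_{j<i}a_j\big)+(2\lambda-1)|D_{\mu_n}|}{\mu_n+i-1}.
\]
By induction, each $a_i$ is affine in $S^{D^c}_{\mu_n}$, with coefficients that are explicit products $\prod_j(1+\frac{2p-1}{\mu_n+j})$ (the usual ERW-type products). Summing over $i\le t$ and writing the result as $\alpha\,\frac{S^{D^c}_{\mu_n}}{n}+\beta$ defines $\alpha^{D^c}_{\mu_n}$ and $\beta^{D^c}_{\mu_n}$ as random variables depending on $\mu_n$, $|D_{\mu_n}|$ and $\tau_{n+1}$. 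Then we take $\E[\cdot\mid\GG_n]$ to obtain \eqref{condE_D^c}.

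The $D$-block is handled the same way with $\sigma_{n+1}=s$. On that stretch, $S^{D^c}$ is frozen at $S^{D^c}_{\mu_n+\tau_{n+1}}$, while $|D_m|$ grows linearly from $|D_{\mu_n}|$. So the drift equals $(2p-1)S^{D^c}_{\mu_n+\tau_{n+1}}/m+(2\lambda-1)|D_m|/m$. We substitute the affine expression already obtained for $\E[S^{D^c}_{\mu_n+\tau_{n+1}}\mid\GG_n,\tau_{n+1}]$, which is again affine in $S^{D^c}_{\mu_n}$, and this yields \eqref{condE_D}.

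For the asymptotics \eqref{Eq: limit_of_alpha_beta_D_c}--\eqref{Eq: Eq: limit_of_alpha_beta_D}, we expand using $\mu_n\ge n$ from Lemma~\ref{Lem: tau_sigma_positive}. To first order, $a_i=\frac{(2p-1)S^{D^c}_{\mu_n}+(2\lambda-1)|D_{\mu_n}|}{\mu_n}$ plus a correction. Since $|S^{D^c}_{\mu_n}|,|D_{\mu_n}|\le\mu_n$, and $|a_j|\le1$, $\sum_{j<i}|a_j|\le i$, this correction is bounded by $C\,i/\mu_n\le C\,i/n$. Summing over $i\le\tau_{n+1}$, multiplying $S^{D^c}_{\mu_n}/\mu_n$ by $n/n$ to match the normalization, and averaging over $\tau_{n+1}$ using independence gives
\[
\E[\alpha^{D^c}_{\mu_n}\mid\GG_n]=(2p-1)\frac{\E[\tau]n}{\mu_n}+R_n,\qquad |R_n|\le \frac{C\,\E[\tau^2]}{n}.
\]
Here $\E[\tau^2]<\infty$ by (\textbf{A2}). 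The $\beta$ term and the $D$-block are treated the same way, with $\E[(\tau+\sigma)^2]$ as the uniform deterministic constant.

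The main obstacle is making this error bound genuinely uniform and deterministic. The products $\prod_j(1+\frac{2p-1}{\mu_n+j})$ must be expanded to first order with a remainder controlled by $t^2/\mu_n\le t^2/n$ for every realization. This uses only $\mu_n\ge n$ and $|2p-1|\le1$. Integrating this remainder against the law of $\tau$ (respectively $\tau+\sigma$) is then finite by the exponential moments in (\textbf{A2}).

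Finally, the limits \eqref{eq:alpha_beta_limits}--\eqref{Eq: alpha_beta_D_lim} follow from \eqref{Eq: SLLN_mu}, $n/\mu_n\to1/\mu$, and Lemma~\ref{Lem: SLLN for D}, $|D_{\mu_n}|/\mu_n\to\E[\sigma]/\mu$, which give almost sure convergence of the leading terms.
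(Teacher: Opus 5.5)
Your proposal is correct and follows the paper's skeleton. You condition on the cycle lengths (independent of $\GG_n$), and use the tower property to get the linear recursion whose solution is a sum of products $\prod_l\bigl(1+\frac{2p-1}{\mu_n+l}\bigr)$ times one-step drifts. You then read off $\alpha,\beta$ as the coefficients of $S^{D^c}_{\mu_n}/n$ and of the constant, treat the $D$-stretch with $S^{D^c}$ frozen, and pass to the limit via $\mu_n/n\to\mu$ and $|D_{\mu_n}|/n\to\E[\sigma]$.

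The genuine difference is the remainder estimate. The paper expands the products as full power series and bounds the tail by a constant times $\frac{n}{\mu_n^2}\E[e^{\delta_0\tau}]$. This is where it needs the exponential moments of (\textbf{A2}) and a threshold $n\ge n_0$ with $|2p-1|/\mu_n\le\delta_0$, because the generic bound $\bigl|\prod_l(1+x_l)-1\bigr|\le e^{\sum_l|x_l|}-1$ is not $\OO(t/\mu_n)$ uniformly in $t$. Your pathwise first-order bound avoids this. The bound $\bigl|\prod_{l=j+1}^{t-1}\bigl(1+\frac{2p-1}{\mu_n+l}\bigr)-1\bigr|\le t/\mu_n$ holds for every $t$. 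When $2p-1>0$ it follows by comparing with the telescoping product $\prod_l\frac{\mu_n+l+1}{\mu_n+l}=\frac{\mu_n+t}{\mu_n+j+1}$. When $2p-1<0$ it follows from $1-\prod_l(1-x_l)\le\sum_l x_l$ with $x_l\in[0,1]$. Hence the remainder is at most $Ct^2/n$ for all $n$. Only $\E[\tau^2]$ and $\E[(\tau+\sigma)^2]$ are needed, so your appeal to exponential moments is stronger than necessary.

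Defining $\alpha^D,\beta^D$ as the exact coefficients also makes the $D$-block identity exact. The paper drops an $\OO(1/\mu_n)$ cross term when it defines them; this is harmless, since the term can be absorbed into $r^D_{\mu_{n+1}}$.

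One step to tighten: $|a_j|\le1$ bounds the value of the affine expression, not its two coefficients separately. To get the asymptotics of $\E[\alpha\mid\GG_n]$ and $\E[\beta\mid\GG_n]$ individually, run the same induction on the coefficient recursions. For example, $\alpha_i=(2p-1)\bigl(n+\sum_{j<i}\alpha_j\bigr)/(\mu_n+i-1)$ gives $|\alpha_i|\le1$ because $n\le\mu_n$. Likewise $|\beta_i|\le1$ because $|D_{\mu_n}|\le\mu_n$. Alternatively, use the product bound above directly.
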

\begin{proof}
    For $1 \leq k \le  \tau_{n+1}$, let $B_{\mu_n+k} = (2p-1) \frac{S_{\mu_n}^{\sDD^c}}{\mu_n+k} + (2\lambda-1) \frac{|D_{\mu_n+k}|}{\mu_n+k} $. \\
    Recall that $\GG_n = \sigma (\{X_1, X_2, \ldots, X_{\mu_n}; \tau_1, \tau_2, \ldots \tau_n, \sigma_1, \sigma_2, \ldots, \sigma_n\})$.
    Observe that from \eqref{Eq: conditional_expectation} and using tower property given $\tau_{n+1}=k+1$,
    \begin{eqnarray} 
    \label{eq:R_1}
\E\left[ \sum\limits_{i=1}^{k+1} X_{\mu_{n} + i} \Big \vert \GG_n \right]  &=& \E\left[ \E \left[\sum\limits_{i=1}^{k+1} X_{\mu_{n} + i} \vert \GG_{n}, X_{\mu_n+1},\dots X_{\mu_n+k} \right] \Big \vert \GG_{n} \right]\nonumber \\
&=& \E \left[ \frac{(2p-1)}{\mu_n+k}(S_{\mu_n}^{\sDD^c} + \sum\limits_{i=1}^{k} X_{\mu_{n} + i}) + (2\lambda-1) \frac{|D_{\mu_n+k}|}{\mu_n+k} \Big \vert \GG_{n} \right] + \E\left[ \sum\limits_{i=1}^{k} X_{\mu_{n} + i}  \Big \vert  \GG_{n} \right] \nonumber\\
     &=& B_{\mu_n+k} + \E\left[\left(1+\frac{2p-1}{\mu_n+k} \right) \sum\limits_{i=1}^{k} X_{\mu_{n} + i}  \Big \vert  \GG_{n} \right] \nonumber \\
    &=&  \sum_{j=0}^{k} \prod_{l=j+1}^{k} \left(1+\frac{2p-1}{\mu_n+l} \right) B_{\mu_n+j}.
\end{eqnarray}
Therefore, from the above expression we get, 
\begin{equation}
\label{Eq: condi_exp_first_tau}
\E\left[ \sum\limits_{i=1}^{\tau_{n+1}} X_{\mu_{n} + i} \Big \vert \GG_n \right]= \E\left[ \sum_{j=0}^{\tau_{n+1}-1} \prod_{l=j+1}^{\tau_{n+1}-1} \left(1+\frac{2p-1}{\mu_n+l} \right) B_{\mu_n+j}\Big \vert \GG_n \right]
\end{equation}
Define 
\begin{equation}
\label{Eq: Def_alpha_D_c}
\alpha_{\mu_n}^{D^c}= (2p-1) \sum_{j=0}^{\tau_{n+1}-1} \left(\prod_{l=j+1}^{\tau_{n+1}-1}\left(1+\frac{2p-1}{\mu_n+l}\right)\right)\frac{n}{\mu_n +j},
\end{equation} 
and
\begin{equation}
\label{Eq: Def_beta_D_c}
\beta_{\mu_n}^{D^c} =(2 \lambda -1)\sum_{j=0}^{\tau_{n+1}-1} \left(\prod_{l=j+1}^{\tau_{n+1}-1}\left(1+\frac{2p-1}{\mu_n+l}\right)\right)\frac{|D_{\mu_n +j|}}{\mu_n +j}.
\end{equation}
Therefore, from \eqref{Eq: condi_exp_first_tau}, we get 
\[
\E\left[ \sum\limits_{i=1}^{\tau_{n+1}} X_{\mu_{n} + i} \Big \vert \GG_n \right]= \E[\alpha_{\mu_n}^{D^c} \vert \GG_n]\frac{S_{\mu_n}^{D^c}}{n} + \E[\beta_{\mu_n}^{D^c} \vert \GG_n],
\]
which shows \eqref{condE_D^c}. 
Let us first evaluate $ \E[\alpha_{\mu_n}^{D^c} \vert \GG_n]$. This term can be simplified as follows. 
\begin{eqnarray*}
   \alpha_{\mu_n}^{D^c}= \sum_{j=0}^{\tau_{n+1}-1} \frac{(2p-1)n}{\mu_n+j}\left(1+\sum_{l=j+1}^{\tau_{n+1}-1} \left(\frac{2p-1}{\mu_n+l} \right) +\sum_{\substack{l_1, l_2 = j+1 \\ l_1\neq l_2}}^{\tau_{n+1}-1} \left(\frac{2p-1}{\mu_n+l_1} \right)\left(\frac{2p-1}{\mu_n+l_2} \right)+\cdots\right).
\end{eqnarray*}
$\alpha_{\mu_n}^{D^c}$ can be divided into two parts. Thus, the leading term can be approximated as 
\begin{eqnarray}
\label{Eq: term_lim_alpha_n}
\E\left[\sum_{j=0}^{\tau_{n+1}-1} \frac{(2p-1)n}{\mu_n+j} \Big\vert \GG_n\right] &=& (2p-1)\frac{n}{\mu_n} \E\left[\sum_{j=0}^{\tau_{n+1}-1} \frac{\mu_n}{\mu_n+j} \Big\vert \GG_n\right] \nonumber\\
&=&  (2p-1)\frac{n}{\mu_n}\E\left[\sum_{j=0}^{\tau_{n+1}-1} 1-\frac{j}{\mu_n+j} \Big\vert \GG_n\right].
\end{eqnarray}
Now observe that since $\{\tau_n\}_{n \ge 1}$ is a sequence of i.i.d.\, random variables, we
\begin{equation}
\label{Eq: sum_tau_summands}
\E\left[\sum_{j=0}^{\tau_{n+1}-1} \frac{j}{\mu_n+j} \Big\vert \GG_n\right] \le \E\left[\sum_{j=0}^{\tau_{n+1}-1} \frac{\tau_{n+1}}{\mu_n} \Big\vert \GG_n\right]= \frac{\E \left[\tau^2\right]}{\mu_n}.
\end{equation}
Therefore, from equations \eqref{Eq: term_lim_alpha_n} and \eqref{Eq: sum_tau_summands}, and using $\mu_n \ge n, \text{ a.s.}$ \eqref{Eq: mu_greater_than_n}, we get for an appropriate $\tilde{A}_n^{D^c}$
\begin{eqnarray} 
\label{Eq: Bound_term_first}
\E\left[\sum_{j=0}^{\tau_{n+1}-1} \frac{(2p-1)n}{\mu_n+j} \Big\vert \GG_n\right]
   &=& (2p-1)\frac{\E[\tau]n}{\mu_n} + \tilde{A}_n^{D^c},
\end{eqnarray}
where using the fact that $\mu_n \ge n, \text{ a.s.}$ from Lemma \ref{Lem: tau_sigma_positive}, we get 
\begin{equation}
\label{Eq: bound_on_A_tilde_c}
|\tilde{A}_n^{D^c}|\le \frac{n \E \left[\tau^2\right]}{\mu^2_n} \le \frac{\E \left[\tau^2\right]}{\mu_n}, \text{ a.s.}
\end{equation}
Next, we evaluate the second part. Without loss of generality, we may assume $2p-1>0$, otherwise we can work with $|1-2p|$. Then we get the following bound.
\begin{eqnarray*}
    \sum_{l=0}^{\tau_{n+1}-1} \left(\frac{2p-1}{\mu_n+l} \right) +\sum_{\substack{l_1, l_2 = 0 \\ l_1 \neq l_2}}^{\tau_{n+1}-1} \left(\frac{2p-1}{\mu_n+l_1} \right)\left(\frac{2p-1}{\mu_n+l_2} \right)+\cdots
    &\leq&  \sum_{l=0}^{\tau_{n+1}-1} \frac{2p-1}{\mu_n} +\sum_{ l_1\neq l_2}^{\tau_{n+1}-1} \frac{(2p-1)^2}{\mu_n^2} +\cdots \\
    &\leq& \sum_{k=1}^\infty \frac{(2p-1)^k\tau_{n+1}^k}{k!\mu_n^k}.  
\end{eqnarray*}
Thus, we get 
\begin{equation}
    \sum_{j=0}^{\tau_{n+1}-1} \frac{(2p-1)n}{\mu_n+j}\left(\sum_{l=j+1}^{\tau_{n+1}-1} \left(\frac{2p-1}{\mu_n+l} \right) +\sum_{\substack{l_1, l_2 = j+1 \\ l_1\neq l_2}}^{\tau_{n+1}-1} \left(\frac{2p-1}{\mu_n+l_1} \right)\left(\frac{2p-1}{\mu_n+l_2} \right)+\cdots\right) \leq \sum_{k=1}^\infty \frac{(2p-1)^{k+1}n\tau_{n+1}^{k+1}}{k!\mu_n^{k+1}}.
\end{equation}
Let $\delta>0$ be as in assumption (\textbf{A2}), i.e.  $\forall t\in (-\delta,\delta),$ $ \E[e^{t\tau}] < \infty$. Choose $0 <\delta_0 < \delta$. 
From \eqref{Eq: mu_greater_than_n}, we know that $\mu_n \ge n$, $\exists n_0$ such that $\PP \left(\sup_{n \ge n_0}\frac{2p-1}{\mu_n} \leq \delta_0 \right) =1$. Futhermore, from \eqref{Eq: mu_greater_than_n}, it follows that $\frac{\tau_n}{\mu_n} <1$, for large n. Also, from (\textbf{A2}) we know that all moments of $\E \left[\tau_n^k\right] < \infty$, for all $k \ge 1$, which follows from the moment problem (see pg. 296 Section 30 \cite{Bil_2012}). Therefore, 
\begin{eqnarray}
\label{Eq: Bound_term_second}
    \E\left[\sum_{k=1}^\infty \frac{(2p-1)^{k+1}n\tau_{n+1}^{k+1}}{k!\mu_n^{k+1}} \Big\vert \GG_n\right] &\leq& \frac{(2p-1)n}{\mu_n^2} \E\left[\sum_{k=1}^\infty \frac{(2p-1)^{k}\tau_{n+1}^{k}}{k!\mu_n^{k-2}} \Big\vert \GG_n\right] \nonumber \\
    &\leq& \frac{(2p-1)^3n}{\mu_n^2} \E\left[\sum_{k=1}^\infty \frac{\delta_0^{k-2}\tau_{n+1}^{k}}{k!} \Big\vert \GG_n\right] \nonumber\\
    &\leq& \frac{(2p-1)^3n}{\mu_n^2}\frac{1}{\delta_0^2} \E[e^{\delta_0\tau}] \le  \frac{C}{\mu_n}, 
\end{eqnarray}
for an appropriate constant $C>0$.
Thus, since $\mu_n \ge n \text{ a.s.}$, from equations \eqref{Eq: Bound_term_first}, \eqref{Eq: bound_on_A_tilde_c} and \eqref{Eq: Bound_term_second}, it follows that $\E[\alpha_{\mu_n}^{D^c} \vert \GG_n]=(2p-1)\frac{\E[\tau]n}{\mu_n} + \mathcal{O}\left(\frac{1}{n}\right), \text{ a.s.}$, and hence $\lim_{n \to \infty}\E[\alpha_{\mu_n}^{D^c} \vert \GG_n] = (2p-1)\frac{\E[\tau]}{\mu}$ a.s., since $\frac{\mu_n}{n} \stackrel{a.s.}{\longrightarrow} \mu$, as $n \to \infty$ which completes the proof of the first part of \eqref{Eq: limit_of_alpha_beta_D_c}.
Observe that from Definition \ref{Def: Alt_def_DD}, we know that for $0 \leq j \leq \tau_{n+1}$, $|D_{\mu_n+j}| = |D_{\mu_n}|$. Thus, from \eqref{Eq: SLLN_D}, \eqref{Eq: Def_alpha_D_c} and \eqref{Eq: Def_beta_D_c} and the first part of \eqref{Eq: limit_of_alpha_beta_D_c} which we have already established, we get as $n \to \infty$
\begin{eqnarray}
\label{Eq: Beta_D_c_calc}
     \E[\beta_{\mu_n}^{D^c} \vert \GG_n]
     &=& \frac{2\lambda-1}{2p-1}\frac{|D_{\mu_n}|}{n} \E[\alpha_{\mu_n}^{D^c} \vert \GG_n]\nonumber \\
     &=& (2\lambda -1) \frac{\E[\tau]|D_{\mu_n}|}{\mu_n} + \mathcal{O}\left(\frac{1}{n}\right) \stackrel{a.s.}{\longrightarrow} (2\lambda -1) \frac{\E \left[\sigma \right] \E[\tau]}{\mu},
\end{eqnarray}
Similar calculations as in  \eqref{eq:R_1} show that,
\begin{eqnarray*}
   & & \E\left[ \sum\limits_{i=1}^{\sigma_{n+1}} X_{\mu_{n} + \tau_{n+1} + i} \Big \vert \GG_n \right] \\ 
     &=& \E \left[\frac{2p-1}{\mu_{n+1}-1}\left(S_{\mu_n}^{\sDD^c} + \sum\limits_{j=1}^{\tau_{n+1}} X_{\mu_{n} + j}\right)+ \frac{2\lambda-1}{\mu_{n+1}-1}|D_{\mu_{n+1}-1}| \Big \vert \GG_n \right] 
     + \E\left[ \sum\limits_{i=1}^{\sigma_{n+1}-1} X_{\mu_{n} + \tau_{n+1} + i} \Big \vert  \GG_{n} \right] \nonumber \\
    \nonumber &=& \E \left[ \sum_{i=1}^{\sigma_{n+1}} \frac{2p-1}{\mu_{n+1}-i}\left(S_{\mu_n}^{\sDD^c} +  \sum\limits_{j=1}^{\tau_{n+1}} X_{\mu_{n} + j}  \right) + \frac{2\lambda-1}{\mu_{n+1}-i}|D_{\mu_{n+1}-i}| \Big \vert \GG_{n} \right]\\
    \nonumber &=& \E \left[ \sum_{i=1}^{\sigma_{n+1}} \frac{2p-1}{\mu_{n+1}-i}\left((n+ \alpha_{\mu_n}^{D^c})  \frac{S_{\mu_n}^{\sDD^c}}{n} + \beta_{\mu_n}^{D^c}   \right) + \frac{2\lambda-1}{\mu_{n+1}-i}|D_{\mu_{n+1}-i}| \Big \vert \GG_{n} \right]\\
    &=& \E \left[ \sum_{i=1}^{\sigma_{n+1}} \frac{(2p-1)n}{\mu_{n+1}-i} \Big \vert \GG_{n} \right]  \frac{S_{\mu_n}^{\sDD^c}}{n}  + \E \left[ \sum_{i=1}^{\sigma_{n+1}} \frac{2\lambda-1}{\mu_{n+1}-i}|D_{\mu_{n+1}-i}| \Big \vert \GG_{n} \right] + \mathcal{O}\left(\frac{1}{\mu_n}\right),
\end{eqnarray*}
since from equations \eqref{Eq: term_lim_alpha_n}, \eqref{Eq: Bound_term_second} and \eqref{Eq: Beta_D_c_calc}, it follows that \[\E \left[ \sum_{i=1}^{\sigma_{n+1}} \frac{2p-1}{\mu_{n+1}-i}\left( \alpha_{\mu_n}^{D^c}+ \beta_{\mu_n}^{D^c}\right)\Big \vert \GG_{n} \right] \leq \frac{(2p-1)\E[\sigma]}{\mu_n}\E \left[ \alpha_{\mu_n}^{D^c}+ \beta_{\mu_n}^{D^c}\Big \vert \GG_{n} \right]= \OO \left(\frac{1}{\mu_n}\right), \]
where the constant in $\OO \left(\frac{1}{\mu_n}\right)$ is uniform and deterministic a.s.
Writing 
\[
\alpha_{\mu_n}^{D}= \sum_{i=1}^{\sigma_{n+1}} \frac{(2p-1)n}{\mu_{n+1}-i}, 
\]
and using bounds similar to \eqref{Eq: Bound_term_first}, shows that as $n \to \infty$, 
\begin{equation}
     \E[\alpha_{\mu_n}^{D} \vert \GG_n] = (2p-1)\frac{\E[\sigma]n}{\mu_n} + \mathcal{O}\left(\frac{1}{\mu_n}\right) \stackrel{a.s.}{\longrightarrow} (2p-1)\frac{\E[\sigma]}{\mu}. 
\end{equation}
Define 
\[
\beta_{\mu_n}^{D} := \sum_{i=1}^{\sigma_{n+1}} \frac{2\lambda-1}{\mu_{n+1}-i}|D_{\mu_{n+1}-i}|.
\]
Recall that $\mu_{n+1}=\mu_{n}+\tau_{n+1}+\sigma_{n+1}$, and from Definition \ref{Def: Alt_def_DD}, we get  $|D_{\mu_n+\tau_{n+1}+j}| = |D_{\mu_n}|+j$ for $1 \le j \le \sigma_{n+1}$. Therefore, we get as $n \to \infty$, 
\begin{eqnarray}
     \E[\beta_{\mu_n}^{D} \vert \GG_n] &=&  \E \left[ \sum_{j=1}^{\sigma_{n+1}} \frac{(2\lambda-1)n}{\mu_{n}+\tau_{n+1}+j}\frac{|D_{\mu_{n}}|+j}{n} \Big \vert \GG_{n} \right] \nonumber\\
      &=&  \E \left[ \sum_{j=1}^{\sigma_{n+1}} \frac{(2\lambda-1)|D_{\mu_n}|} {\mu_{n}+\tau_{n+1}+j} \Big \vert \GG_{n} \right]  + \E \left[ \sum_{j=1}^{\sigma_{n+1}} \frac{(2\lambda-1)j}{\mu_{n}+\tau_{n+1}+j}\Big \vert \GG_{n} \right] \nonumber\\
     &=& \left((2\lambda-1)\frac{\E[\sigma]|D_{\mu_{n}}|}{\mu_n} + \mathcal{O}\left(\frac{1}{\mu_n}\right)\right) + \left((2\lambda-1)\frac{\E[\sigma]}{\mu_n} +\mathcal{O}\left(\frac{1}{\mu_n^2}\right)\right)\nonumber\\
     &=& (2\lambda-1)\frac{\E[\sigma]|D_{\mu_{n}}|}{\mu_n} + \mathcal{O}\left(\frac{1}{\mu_n}\right)\stackrel{a.s.}{\longrightarrow}(2\lambda-1)\frac{\E^2[\sigma]}{\mu}.
\end{eqnarray}
\end{proof}
Recall \eqref{Eq: recursion_Dc_step1}, and using \eqref{condE_D^c}, we get 
\begin{equation*}
\label{Eq: recursion_Dc_step2}
\frac{S^{D^c}_{\mu_{n+1}}}{n+1}= \frac{S^{D^c}_{\mu_{n}}}{n}+\frac{1}{n+1} \left[\E[\alpha_{\mu_n}^{D^c} \vert \GG_n] \frac{S_{\mu_n}^{\sDD^c}}{n} + \E[\beta_{\mu_n}^{D^c} \vert \GG_n]-\frac{S^{D^c}_{\mu_{n}}}{n}+ \Delta M_{\mu_{n+1}}^{D^c} \right].
\end{equation*}

Consider the following recursion
\begin{equation}
\label{Eq: recursion_bdd_time_D_c_right_normalisation}
\frac{S_{\mu_{n+1}^{D^c}}}{n+1}= \frac{S_{\mu_{n}^{D^c}}}{n}+ \frac{1}{n+1} \left[(\alpha^{D^c}-1)\frac{S_{\mu_{n}^{D^c}}}{n}+\beta^{D^c}+ \Delta M_{\mu{n+1}}^{D^c}+r_{\mu_{n+1}}^{D^c}\right],
\end{equation} 
where we recall from \eqref{eq:alpha_beta_limits} that
\begin{equation*}
\label{Eq:alpha_D_c_bounded_times_right_normalisation}
\alpha^{D^c}=:\lim_{n \to \infty} \mathbb{E}\left[\alpha_{\mu_n}^{D^c}| \GG_{n}\right]= (2p-1)\frac{\E \left[\tau\right]}{\mu},
\end{equation*}
and  
\begin{equation*}
\label{Eq:beta_D_c_bounded_times_right_normalisation}
\beta^{D^c}:=\lim_{n \to \infty}\E[\beta_{\mu_n}^{D^c} \vert \GG_{n}]=(2 \lambda -1) \frac{\E [\sigma] \E[\tau]}{\mu}.
\end{equation*}
And the martingale difference $\Delta M_{\mu_{n+1}}$ is defined in \eqref{Eq: martingale_diff_D_c}. Furthermore, 
the error $r_{\mu_{n+1}}^{D^c}$ is defined as 
\begin{equation}
\label{Eq: error_D_c_mu_n}
r_{\mu_{n+1}}^{D^c}:= \left(\E \left[\alpha_{\mu_n}^{D^c} | \GG_{n} \right]-\alpha^{D^c}\right)\frac{S_{\mu_n}^{D^c}}{n}+ \E \left[\beta_{\mu_n}^{D^c}| \GG_{n}\right]- \beta^{{D^c}}.
\end{equation}
Similarly, from \eqref{condE_D}, the recursion in \eqref{Eq: recursion_D_step1} can be written as 
\begin{equation}
\label{Eq: recursion_bdd_time_D_right_normalisation}
\frac{S_{\mu_{n+1}}^{D}}{n+1}= \frac{S_{\mu_{n}}^{D}}{n}+ \frac{1}{n+1} \left[\alpha^{D}\frac{S_{\mu_{n}}^{D^c}}{n} - \frac{S_{\mu_{n}}^{D}}{n}+\beta^{D}+ \Delta M_{\mu_{n+1}}^{D}+r_{\mu_{n+1}}^{D}\right],
\end{equation}
where from \eqref{Eq: alpha_beta_D_lim},
\begin{equation*}
\label{Eq:alpha_D_bounded_times_right_normalisation}
\alpha^{D}=:\lim_{n \to \infty} \mathbb{E}\left[\alpha_{\mu_n}^{D}| \GG_{n}\right]=(2p-1)\frac{\mathbb{E}[\sigma]}{\mu},
\end{equation*}
and 
\begin{equation*}
\label{Eq:beta_D_bounded_times_right_normalisation}
\beta^{D} :=\lim_{n \to \infty}\E[\beta_{\mu_n}^{D^c} \vert \GG_{n}] =(2\lambda-1)\frac{\mathbb{E}^2[\sigma]}{\mu}.
\end{equation*}
And, the martingale difference is given by \eqref{Eq: martingale_diff_D}
and the error $r_{\mu_{n+1}}^{D}$ is defined as 
\begin{equation}
\label{Eq: error_D_mu_n}
r_{\mu_{n+1}}^{D}:= \left(\E\left[\alpha_{\mu_n}^{D} | \GG_{n} \right]-\alpha^{D}\right)\frac{S_{\mu_n}^{D^c}}{n}+ \E \left[\beta_{\mu_n}^{D}| \GG_{n}\right]- \beta^{{D}}.
\end{equation}
Writing $\bS_n:= \left(S_n^{D^c}, S_{n}^D \right)^\top$, we can write the vector form of the stochastic approximation as 
\begin{equation}
\label{Eq: recursion_vector_matrix}
\frac{\bS_{\mu_{n+1}}}{n+1} = \frac{\bS_{\mu_{n}}}{n}+ \frac{1}{n+1} \left[A \frac{\bS_{\mu_{n}}}{n}+ \boldsymbol{\beta}+ \Delta \bM_{\mu_{n+1}} + \br_{\mu_{n+1}}\right], 
\end{equation}
where 
\begin{equation}
\label{Eq: matrix_A}
A = \begin{pmatrix}
     (2p-1)\frac{\E [\tau]}{\mu}-1 & 0\\
     (2p-1)\frac{\E [\sigma]}{\mu} & -1
\end{pmatrix}
\end{equation}
and 
\begin{equation}
\boldsymbol{\beta}:= \left(\beta^{D^c}, \beta ^D\right)^\top,
\end{equation}
\begin{equation}
\label{Eq: martingale_diff_vector}
\Delta \bM_{ \mu_{n}}:= \left(\Delta M_{\mu_n}^{D^c}, \Delta M_{\mu_n}^{D}\right)^\top,
\end{equation}
and 
\begin{equation}
\label{Eq: error_vector}
\br_{ \mu_{n}}:= \left( r_{\mu_n}^{D^c}, r_{\mu_n}^{D}\right)^\top.
\end{equation}
\begin{lemma}
\label{Lem: eigenvalues_of_A}
Let $A$ be as in \eqref{Eq: matrix_A}. Then $-A$ has two distinct positive eigenvalues given by $\eta_1=1$, and $\eta_2= 1-(2p-1)\frac{\E [\tau]}{\mu}$. Furthermore, if $\rho^*:= \min\{\eta_1, \eta_2\}$, then $\rho^* =1$, iff $p \le \frac{1}{2}$, and $\rho^*= \eta_2 <1,$ iff $p > \frac{1}{2}$.
\end{lemma}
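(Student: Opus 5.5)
The matrix $A$ in \eqref{Eq: matrix_A} is lower triangular, so the plan is simply to read off its eigenvalues from the diagonal and then compare them. Since $A$ has zero in the $(1,2)$ entry, its characteristic polynomial factorises as
\[
\det(A - xI) = \left((2p-1)\tfrac{\E[\tau]}{\mu}-1-x\right)(-1-x),
\]
so the eigenvalues of $A$ are $-1$ and $(2p-1)\frac{\E[\tau]}{\mu}-1$. Consequently, the eigenvalues of $-A$ are $\eta_1=1$ and $\eta_2=1-(2p-1)\frac{\E[\tau]}{\mu}$.

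For positivity, I will use that $\mu=\E[\tau]+\E[\sigma]$ and, by Lemma \ref{Lem: tau_sigma_positive}, $\sigma\ge 1$ a.s. Hence $\E[\sigma]\ge 1>0$, so $0<\frac{\E[\tau]}{\mu}<1$; here $\E[\tau]\ge1$ is finite by (\textbf{A2}). Since $|2p-1|\le 1$, this gives $|(2p-1)\frac{\E[\tau]}{\mu}|<1$, and therefore $\eta_2>0$.

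For the comparison of $\eta_1$ and $\eta_2$, the key quantity is $\eta_2-\eta_1=-(2p-1)\frac{\E[\tau]}{\mu}$. Because $\frac{\E[\tau]}{\mu}>0$, this has the sign opposite to $2p-1$. If $p>\frac12$, then $\eta_2<1$, so $\rho^*=\eta_2<1$. If $p\le\frac12$, then $\eta_2\ge 1$, so $\rho^*=1$. These two cases are exhaustive and mutually exclusive, which yields both ``iff'' statements.

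The only delicate point is the word ``distinct'' in the statement. At $p=\frac12$ we get $\eta_2=\eta_1=1$, so the eigenvalues coincide there. I would note this explicitly and read the distinctness claim as holding for $p\neq\frac12$. At $p=\frac12$, $A$ is still diagonalisable: its off-diagonal entry $(2p-1)\frac{\E[\sigma]}{\mu}$ vanishes and $A=-I$. This suffices for the stochastic-approximation theorems used later. No other obstacles are expected.
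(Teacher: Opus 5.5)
Your proof is correct and follows essentially the same route as the paper. You read off the eigenvalues of the lower-triangular matrix $A$, use $\E[\sigma]\ge 1$ (Lemma \ref{Lem: tau_sigma_positive}) to get $\E[\tau]/\mu<1$ and hence $\eta_2>0$, and compare $\eta_2$ with $1$ through the sign of $2p-1$; your single bound $|(2p-1)\E[\tau]/\mu|<1$ simply merges the paper's two cases. Your caveat is also right: at $p=\frac12$ one has $\eta_1=\eta_2=1$ and $A=-I$, so ``distinct'' in the statement fails there (the paper notes $\eta_2=1$ only when $p=\frac12$ but does not qualify the distinctness claim), though this is harmless for the later stochastic-approximation arguments.
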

\begin{proof}
It is immediate that the eigenvalues of $-A$ are given $\eta_1=1$ and $\eta_2=1-(2p-1)\frac{\E [\tau]}{\mu}$. We show that $\eta_2>0$, for any $0 \le p \le 1$. For $0 \le p \le \frac{1}{2}$, so $(2p-1)\le 0$ and from \eqref{Eq: tau_greater_than_1} we know that $\E [\tau] \ge 1$, and $\mu \ge 1$.  Therefore, $\eta_2\ge 1$. For $\frac{1}{2}< p \le 1$, $0< (2p-1) \le 1,$ and since we have from \eqref{Eq: sigma_greater_than_1}$\mu = \E [\tau] + \E [\sigma]$ and $\E [\sigma] \ge 1$, we have $\frac{\E [\tau]}{\mu}< 1$, which shows that $\eta_2>0$. Then, we have $\rho^*:= \min\{1, 1- (2p-1)\frac{\E [\tau]}{\mu} \}$. Furthermore, observe that $\eta_2 =1$, only when $p =\frac{1}{2}$. Therefore, it follows that $\rho^* =1$, iff $p \le \frac{1}{2}$, and $\rho^*= \eta_2 <1,$ iff $p > \frac{1}{2}$.
\end{proof}
\begin{lemma}
\label{lem: martingale_difference_bounded}
Let $\Delta \bM_{ \mu_{n}}$ be as in \eqref{Eq: martingale_diff_vector}. Then, almost surely, 
$\sup_{n \ge 1} \E \left[\lVert \Delta \bM_{ \mu_{n}}\rVert ^{2+\delta} \vert \GG_n\right]<\infty$ for any $\delta>0$.
\end{lemma}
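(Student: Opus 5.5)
The plan is to bound each coordinate of $\Delta \bM_{\mu_{n}}$ by a deterministic multiple of a renewal block length, and then use the exponential moments from (\textbf{A2}). Since every step satisfies $|X_j| = 1$, the two block sums obey
\[
\Bigl|\sum_{j=1}^{\tau_{n}} X_{\mu_{n-1}+j}\Bigr| \le \tau_{n}, \qquad \Bigl|\sum_{j=\tau_{n}+1}^{\tau_{n}+\sigma_{n}} X_{\mu_{n-1}+j}\Bigr| \le \sigma_{n}.
\]
Their conditional expectations given $\GG_{n-1}$ are bounded the same way, by $\E[\tau_n \mid \GG_{n-1}]$ and $\E[\sigma_n \mid \GG_{n-1}]$. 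By (\textbf{A1}) the block lengths $\tau_n,\sigma_n$ are independent of $\GG_{n-1}$, so these conditional expectations equal $\E[\tau]$ and $\E[\sigma]$. One point to check is that the indexing in \eqref{Eq: martingale_diff_D_c} and \eqref{Eq: martingale_diff_D} is consistent, so that the conditioning $\sigma$-field is $\GG_{n-1}$ for $\Delta \bM_{\mu_n}$. I will therefore read the statement as $\E[\lVert \Delta \bM_{\mu_{n}}\rVert^{2+\delta}\mid \GG_{n-1}]$. If $\GG_n$ is taken literally, $\Delta\bM_{\mu_n}$ is $\GG_n$-measurable, and the bound below then holds pathwise with $\tau_n,\sigma_n$ in place of their moments.

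With these bounds, $\lVert \Delta \bM_{\mu_n}\rVert \le \tau_n + \sigma_n + \E[\tau] + \E[\sigma]$. Using the elementary inequality $(a+b+c+d)^{q} \le 4^{q-1}(a^q+b^q+c^q+d^q)$ with $q = 2+\delta$, I get
\[
\E\bigl[\lVert \Delta \bM_{\mu_{n}}\rVert^{2+\delta}\mid \GG_{n-1}\bigr] \le 4^{1+\delta}\bigl(\E[\tau^{2+\delta}] + \E[\sigma^{2+\delta}] + \E[\tau]^{2+\delta} + \E[\sigma]^{2+\delta}\bigr).
\]
The conditioning again disappears because $(\tau_n,\sigma_n)$ is independent of $\GG_{n-1}$ and has the law of $(\tau,\sigma)$.

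The right-hand side is a deterministic constant that does not depend on $n$. Taking the supremum over $n$ therefore gives the claim almost surely, for every $\delta>0$.

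The remaining ingredient is finiteness of all moments of $\tau$ and $\sigma$. This follows from (\textbf{A2}): for $0<t<\delta$ we have $x^{m} \le C_{m,t} e^{tx}$ for $x \ge 0$, so $\E[\tau^{m}] \le C_{m,t}\E[e^{t\tau}] < \infty$, and likewise for $\sigma$. Lemma \ref{Lem: tau_sigma_positive} guarantees that $\tau,\sigma$ are a.s. finite and at least $1$, so the blocks are well defined.

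The only delicate point is the independence of the next block lengths $(\tau_{n},\sigma_{n})$ from the past $\sigma$-field. I would justify it from (\textbf{A1}) together with the fact that $\DD$ is independent of the walk's innovations and sampling. Once that is in place, the argument is a direct moment bound.
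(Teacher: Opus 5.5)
Your proposal is correct and takes essentially the same route as the paper: bound each block sum by its length, control the centering term (the paper uses conditional Jensen, you bound it directly by $\E[\tau]$ and $\E[\sigma]$), and use the independence of the next block lengths from the past $\sigma$-field together with the finite moments guaranteed by (\textbf{A2}). Your reading of the conditioning as one block earlier is exactly what the paper's proof does (it treats $\Delta M_{\mu_{n+1}}$ given $\GG_n$), and it is genuinely needed: under the literal reading the conditional expectation is just $\lVert \Delta \bM_{\mu_n}\rVert^{2+\delta}$, whose supremum over $n$ can be almost surely infinite when $\tau$ or $\sigma$ is unbounded, so your parenthetical pathwise bound would not give the lemma in that case.
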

\begin{proof}
Recall from \eqref{Eq: martingale_diff_vector} that $\Delta \bM_{ \mu_{n}}=\left(\Delta M_{\mu_n}^{D^c}, \Delta M_{\mu_n}^{D}\right)^\top$, where $\Delta M_{\mu_{n+1}}^{D^c}$ is as defined in \eqref{Eq: martingale_diff_D_c} and $\Delta M_{\mu_{n+1}}^{D}$ is as defined in \eqref{Eq: martingale_diff_D}.  
We will first show that 
$\sup_{n \ge 1}\E \left[\lvert\Delta M_{\mu_{n+1}}^{D^c}\rvert ^{2+\delta}| \GG_n\right]<\infty.$ 
Recall from \eqref{Eq: martingale_diff_D_c} that 
$\Delta M_{\mu_{n+1}}^{D^c}= \sum_{j=1}^{\tau_{n+1}} X_{\mu_n +j}- \E \left[\sum_{j=1}^{\tau_{n+1}} X_{\mu_n +j} \Big | \GG_n\right]$. Therefore, since $|X_k| \le 1$, for all $k \ge 1$, and from assumptions (\textbf{A1}) and (\textbf{A2}) and from Jensen's inequality, we have 
\begin{eqnarray*}
\E \left[\lvert\Delta M_{\mu_{n+1}}^{D^c}\rvert ^{2+\delta}| \GG_n\right] & \le & 2^{1+\delta}\left(\E \left[\left(\sum_{j=1}^{\tau_{n+1}} X_{\mu_n +j}\right)^{2+\delta} \Big | \GG_n \right]+\E^{2+\delta} \left[\sum_{j=1}^{\tau_{n+1}} X_{\mu_n +j} \Big | \GG_n\right]\right)\\
& \le & 2^{2+\delta}\E \left[\left(\sum_{j=1}^{\tau_{n+1}} X_{\mu_n +j}\right)^{2+\delta} \Big | \GG_n \right]\\
& \le &  2^{2+\delta} \E \left[\tau_{n+1}^{2+\delta}\Big | \GG_n\right]
= 2^{2+\delta} \E \left[\tau^{2+\delta}\right]<\infty, 
\end{eqnarray*}
which shows that $\sup_{n \ge 1}\E \left[\lvert\Delta M_{\mu_{n+1}}^{D^c}\rvert ^{2+\delta}| \GG_n\right]< \infty$.
Similarly, it can be shown that $\sup_{n \ge 1}\E \left[\lvert\Delta M_{\mu_{n+1}}^{D}\rvert ^{2+\delta}| \GG_n\right]< \infty$. This completes the proof.
\end{proof}
\begin{lemma}
\label{Lem: SLLN_for_S_mu_n}
Let us assume (\textbf{A1}) and (\textbf{A2}). Then 
\begin{equation}
\label{Eq:SLLN_for_S_mu_n}
\frac{S_{\mu_{n}}}{n} \longrightarrow \frac{\mu (2\lambda-1)\E[\sigma]}{\mu - (2p-1)\E[\tau]} \; \text{ a.s.}
\end{equation}
\end{lemma}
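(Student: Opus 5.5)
We apply the stochastic approximation SLLN (Theorem \ref{Thm: SA_SLLN}, in its vector form) to the two-dimensional recursion \eqref{Eq: recursion_vector_matrix} for $\bS_{\mu_n}/n$. The mean-field function is the affine map $h(\theta)=A\theta+\boldsymbol{\beta}$, which is Lipschitz. By Lemma \ref{Lem: eigenvalues_of_A}, $-A$ has eigenvalues $1$ and $1-(2p-1)\E[\tau]/\mu$, both strictly positive, so $A$ is Hurwitz. Hence $\theta^*=-A^{-1}\boldsymbol{\beta}$ is the unique, globally asymptotically stable equilibrium of $\dot\theta=h(\theta)$. The remaining inputs are the martingale and error hypotheses; once these are verified we get $\bS_{\mu_n}/n\to\theta^*$ a.s. and then sum the coordinates.

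\textbf{Step 1 (martingale).} By Lemma \ref{lem: martingale_difference_bounded}, $\sup_n\E[\lVert\Delta\bM_{\mu_{n+1}}\rVert^2\mid\GG_n]<\infty$ a.s. This is the standard square-integrability condition.

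\textbf{Step 2 (error).} We need $\br_{\mu_{n+1}}\to 0$ a.s. From \eqref{Eq: error_D_c_mu_n} and \eqref{Eq: error_D_mu_n}, each coordinate has the form $(\E[\alpha_{\mu_n}\mid\GG_n]-\alpha)S^{D^c}_{\mu_n}/n+(\E[\beta_{\mu_n}\mid\GG_n]-\beta)$. Lemma \ref{lem:alpha_beta} gives a.s. convergence of the conditional expectations of $\alpha$ and $\beta$ to their limits. So it suffices to show that $S^{D^c}_{\mu_n}/n$ is a.s. bounded. Since $|S^{D^c}_{\mu_n}|\le|D^c_{\mu_n}|$, Lemma \ref{Lem: SLLN for D} gives $|S^{D^c}_{\mu_n}|/n\to\E[\tau]$, and in particular a bounded limsup. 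Therefore $\br_{\mu_{n+1}}\to0$ a.s. The same bound also gives a.s. boundedness of the iterates $\bS_{\mu_n}/n$, since $|S^{D}_{\mu_n}|\le|D_{\mu_n}|$ as well. This removes any stability issue in applying the SA theorem.

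\textbf{Step 3 (solve for the limit).} Writing $\theta^*=(x,y)$, the first row of $A\theta^*+\boldsymbol{\beta}=0$ reads
\[
\Bigl(1-(2p-1)\tfrac{\E[\tau]}{\mu}\Bigr)x=(2\lambda-1)\tfrac{\E[\sigma]\E[\tau]}{\mu},
\qquad\text{so}\qquad
x=\frac{(2\lambda-1)\E[\sigma]\E[\tau]}{\mu-(2p-1)\E[\tau]}.
\]
The second row reads
\[
y=(2p-1)\tfrac{\E[\sigma]}{\mu}\,x+(2\lambda-1)\tfrac{\E^2[\sigma]}{\mu}.
\]
Then
\[
x+y=\frac{x\bigl(\mu+(2p-1)\E[\sigma]\bigr)+(2\lambda-1)\E^2[\sigma]}{\mu}.
\]
Substituting $x$, the numerator becomes $(2\lambda-1)\E[\sigma]$ times
\[
\frac{\E[\tau]\bigl(\mu+(2p-1)\E[\sigma]\bigr)+\E[\sigma]\bigl(\mu-(2p-1)\E[\tau]\bigr)}{\mu-(2p-1)\E[\tau]}
=\frac{\mu(\E[\tau]+\E[\sigma])}{\mu-(2p-1)\E[\tau]}
=\frac{\mu^2}{\mu-(2p-1)\E[\tau]}.
\]
Dividing by $\mu$ gives
\[
x+y=\frac{\mu(2\lambda-1)\E[\sigma]}{\mu-(2p-1)\E[\tau]}.
\]
By \eqref{Eq: S_n_sum_of_restricted_walks}, $S_{\mu_n}/n=(S^{D^c}_{\mu_n}+S^D_{\mu_n})/n\to x+y$, which is \eqref{Eq:SLLN_for_S_mu_n}.

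\textbf{Main obstacle.} The delicate point is Step 2 together with the filtration bookkeeping. The recursion is indexed by $n$ with filtration $\GG_n$, but the blocks have random length $\tau_{n+1}+\sigma_{n+1}$. We must check that $\Delta\bM_{\mu_{n+1}}$ is a genuine $\GG_{n+1}$-martingale difference; this holds because $\tau_{n+1},\sigma_{n+1}$ are independent of $\GG_n$ by (\textbf{A1}). We must also check that the error converges to zero a.s., not merely in expectation. The $\OO(1/n)$ bounds in Lemma \ref{lem:alpha_beta} have uniform deterministic constants, so this is fine. If the SA theorem instead requires $\sum_n r_n/n<\infty$ or a similar summability condition, we would first use the $\OO(1/n)$ rates of Lemma \ref{lem:alpha_beta}. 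We would also need the rate in $\mu_n/n\to\mu$, which can be handled because $\br$ only enters through these differences and $\lVert\br_{\mu_{n+1}}\rVert\to0$ suffices for the Kushner--Clark type result.
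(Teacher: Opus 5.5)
Your proposal is correct and follows the paper's proof essentially step for step. Like the paper, you apply Theorem \ref{Thm: SA_SLLN} to the recursion \eqref{Eq: recursion_vector_matrix} using Lemmas \ref{lem: martingale_difference_bounded}, \ref{lem:alpha_beta} and \ref{Lem: eigenvalues_of_A}, solve $A\mathbf{x}+\boldsymbol{\beta}=0$, and sum the two coordinates; your fixed-point algebra reproduces the paper's $s_1^*+s_2^*$.

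Your explicit check that $\bS_{\mu_n}/n$ is a.s.\ bounded is a useful addition, since the paper asserts this condition without verifying it. There is one slip in it: it is $|D^c_{\mu_n}|/n=\frac1n\sum_{k\le n}\tau_k$ that tends to $\E[\tau]$, which gives $\limsup_n|S^{D^c}_{\mu_n}|/n\le\E[\tau]$. The statement $|S^{D^c}_{\mu_n}|/n\to\E[\tau]$, as you wrote it, is not what follows.
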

\begin{proof}
It is easy to see that in \eqref{Eq: recursion_vector_matrix}, the mean function $h(\mathbf{x})= A \mathbf{x} +\boldsymbol{\beta}$ is Lipschitz. Also, we know from Lemma \ref{lem: martingale_difference_bounded}, the martingale differences
$\sup_{n \ge 1} \E \left[\lVert \Delta \bM_{ \mu_{n}}\rVert ^2 \vert \GG_n\right]<\infty$. The error $\br_{ \mu_{n}} \longrightarrow 0, \text{ a.s.}$ from Lemma \ref{lem:alpha_beta}. Hence, this satisfies all conditions i-iv in Theorem \ref{Thm: SA_SLLN}. Furthermore, since we know from Lemma \ref{Lem: eigenvalues_of_A} that the eigenvalues of $A$ are distinct and strictly negative which implies by Corollary \ref{Cor: SA_SLLN} is satisfied. 
Thus, from the recursion in \eqref{Eq: recursion_vector_matrix}, it follows that as $n \to \infty$, 
\begin{equation}
\label{Eq: SLLN_for_vector_S_mu_n}
\frac{\bS_{\mu_{n}}}{n} \longrightarrow \mu \bs^*, \text{ a.s.}
\end{equation}
where $\mu \bs^*$ is the unique solution to $A \mathbf{x} + \boldsymbol{\beta} =0$, and is given by $\mu \bs^*= \left(s_1^*, s_2^*\right)^\top$, where $s_1^*= \frac{ (2\lambda-1)\E\left[\tau \right]\E[\sigma]}{\mu - (2p-1)\E[\tau]} $ and $s_2^*= \frac{ (2\lambda-1)\E^2[\sigma]}{\mu - (2p-1)\E[\tau]}$
Now \eqref{Eq:SLLN_for_S_mu_n} follows from \eqref{Eq: SLLN_for_vector_S_mu_n} by observing that $S_{\mu_n}= \langle \bS_{\mu_{n}}, \mathbf{1} \rangle$, where $<\cdot, \cdot>$ denotes inner product, $\mathbf{1}= \left(1,1\right)^\top$, and $\mu = \E \left[\tau\right]+ \E \left[\sigma\right]$. 
\end{proof}
\begin{lemma}\label{lem:concludinglemma}
Suppose (\textbf{A1}) and (\textbf{A2}) hold. Then, $\frac{S_n}{n} \to \frac{(2\lambda-1)\E[\sigma]}{\mu - (2p-1)\E[\tau]}$ almost surely as $n \to \infty$.
\end{lemma}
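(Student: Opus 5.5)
The plan is to interpolate between consecutive renewal epochs. For each $m \ge 1$ let $N(m) := \max\{n \ge 0 : \mu_n \le m\}$, so that $\mu_{N(m)} \le m < \mu_{N(m)+1}$. Lemma \ref{Lem: tau_sigma_positive} gives $\mu_n \ge n$ and hence $N(m) \le m$. Lemma \ref{Lem: tau_sigma_positive} also gives $\mu_n<\infty$ a.s., so $N(m) \to \infty$ a.s.

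The first step is to control the gap. Since $|X_j| \le 1$,
\[
\left| S_m - S_{\mu_{N(m)}} \right| \le m - \mu_{N(m)} \le \tau_{N(m)+1} + \sigma_{N(m)+1}.
\]
I will show that $(\tau_{n}+\sigma_{n})/n \to 0$ a.s. This follows from (\textbf{A1})--(\textbf{A2}) and Borel--Cantelli. For every $\epsilon>0$,
\[
\sum_n \PP(\tau_n+\sigma_n > \epsilon n) \le \E[(\tau+\sigma)/\epsilon] + 1 < \infty .
\]
Alternatively, one can read it off \eqref{Eq: SLLN_mu}: $(\tau_n+\sigma_n)/n = \mu_n/n - \mu_{n-1}/n \to 0$. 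Since $N(m)\to\infty$, this yields $(\tau_{N(m)+1}+\sigma_{N(m)+1})/N(m) \to 0$ a.s.

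The second step is the renewal identity $m/N(m) \to \mu$. The sandwich $\mu_{N(m)}/N(m) \le m/N(m) < \mu_{N(m)+1}/N(m)$ holds. By \eqref{Eq: SLLN_mu} together with $N(m)\to\infty$ and $(N(m)+1)/N(m)\to 1$, both bounds converge to $\mu=\E[\tau]+\E[\sigma]$ a.s.

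Finally, decompose
\[
\frac{S_m}{m} = \frac{N(m)}{m}\cdot\frac{S_{\mu_{N(m)}}}{N(m)} + \frac{S_m - S_{\mu_{N(m)}}}{m}.
\]
Lemma \ref{Lem: SLLN_for_S_mu_n}, applied along the random subsequence $N(m)\to\infty$, gives
\[
\frac{S_{\mu_{N(m)}}}{N(m)} \to \frac{\mu(2\lambda-1)\E[\sigma]}{\mu-(2p-1)\E[\tau]}
\]
a.s. This is legitimate because it is an almost sure limit of a sequence indexed by $n$, and $N(m)$ takes every large value eventually and is nondecreasing. Multiplying by $N(m)/m \to 1/\mu$ gives the claimed limit. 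The second term is bounded by $(\tau_{N(m)+1}+\sigma_{N(m)+1})/N(m)\cdot N(m)/m \to 0$.

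The only potentially delicate point is the gap control. It needs the growth bound on individual inter-arrival times, which is immediate from finite first moments, so I do not expect a real obstacle.
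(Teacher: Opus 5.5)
Your proposal is correct and follows the paper's argument. It uses the same decomposition $\frac{S_m}{m} = \frac{N(m)}{m}\cdot\frac{S_{\mu_{N(m)}}}{N(m)} + \frac{S_m - S_{\mu_{N(m)}}}{m}$ together with Lemma \ref{Lem: SLLN_for_S_mu_n} and the renewal limit $N(m)/m \to 1/\mu$. The differences are cosmetic: the paper bounds the remainder by $1-\mu_{k(n)}/n$ rather than by $(\tau_{N(m)+1}+\sigma_{N(m)+1})/m$, and your sandwich argument supplies the justification of $N(m)/m\to 1/\mu$ that the paper only asserts.
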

\begin{proof}
Let $k(n):= \sup \{k : \mu_k \le n\}$ denote the number of combined updates of both $\{\tau_n\}_{n \ge 1}$ and $\{\sigma\}_{n \ge 1}$ upto time $n$.
From \eqref{Eq: SLLN_mu}, we know that 
\begin{equation*}
\frac{\mu_n}{n} \stackrel{a.s.}{\longrightarrow} \E \left[\tau\right]+ \E \left[\sigma\right] =\mu.
\end{equation*}

For all $n \geq 1$, there exists $k(n)$, such that, 
\[ 
\mu_{k(n)} \leq n < \mu_{k(n)+1},
\]
and since $\E \left[\mu_1\right]< \infty$, $k(n) \nearrow \infty$, we have  
$\frac{k(n)}{n} \to \frac{1}{\mu}, \text{a.s.}$, as $n \to \infty$. We have,
\begin{eqnarray*}
\frac{S_n}{n} &=& \frac{1}{n}S_{\mu_{k(n)}} + \frac{1}{n}\sum\limits_{i=1}^{n-\mu_{k(n)}} X_{\mu_{k(n)}+i} \\
&=& \frac{k(n)}{n}\frac{S_{\mu_{k(n)}}}{k(n)}+ \frac{1}{n}\sum\limits_{i=1}^{n-\mu_{k(n)}} X_{\mu_{k(n)}+i}.
\end{eqnarray*}
It is easy to see that
\begin{eqnarray*}
\Big \vert \frac{1}{n}\sum\limits_{i=1}^{n-\mu_{k(n)}} X_{\mu_{k(n)}+i} \Big \vert & \leq & 1- \frac{\mu_{k(n)}}{n} \\
&=& 1-\frac{\mu_{k(n)}}{k(n)}\frac{k(n)}{n} \xrightarrow{a.s.} 0.
\end{eqnarray*}
Hence, it follows that 
\[ 
\lim_{n \to \infty} \frac{S_n}{n} = \lim_{n \to \infty} \frac{S_{\mu_{k(n)}}}{k(n)}\frac{k(n)}{n} \xrightarrow{a.s.} s^*,
\]
where $s^*= \frac{ (2\lambda-1)\E[\sigma]}{\mu - (2p-1)\E[\tau]}.$ This completes the proof.
\end{proof}
\begin{proof}[Proof of Theorem \ref{Thm: SLLN_renewal_chain}]
Combining Lemmas \ref{Lem: SLLN_for_S_mu_n} and \ref{lem:concludinglemma} completes the proof.
\end{proof}
\begin{proof}[Proof of Corollary \ref{Cor: recurrence_tansience}]
Observe that it is enough to show that $\mu - (2p-1)\E[\tau]>0$. The proof is trivial when $p=0$ or $p=1$. Therefore, we only consider the case when $0 <p <1$.  
If $0 < p < \frac{1}{2}$, then $(2p-1)\E[\tau]\le 0$, and hence $\mu - (2p-1)\E[\tau]>0$, since $\mu \ge \E[\sigma]>0$.
For $\frac{1}{2}< p \le 1$, $(2p-1)\E[\tau]\ge 0$.
Since $\E [\tau] \le \mu$, we get 
$\mu - (2p-1)\E[\tau]\ge \mu- (2p-1)\mu =2 \mu(1-p)>0 $, since $\mu \ge \E[\sigma]>0$.

\end{proof}

\subsection{Proof of Theorem \ref{Thm: anamolous_diffusion_Dn_non_trivial}}
Observe that when the collection of sets $\DD$ are generated by an arithmetic progression, then $\mu_n =n \mu$, and for any $n = j \mu  +m$ for $1 \le m \le \mu $, we have 
\[
D^c_n= \begin{cases}
D^c_{j \mu } \cup \{j \mu +1, j \mu+2, \ldots, j \mu +m \} & \text{ if } 1 \le m \le k, \\
D^c_{j \mu +k}, & \text{ if } k < m \le \mu, 
\end{cases}
\]
and 
\[
D_n = \begin{cases}
D_{j \mu } & \text{ if } 1 \le m \le k, \\
D_{j \mu } \cup \{j \mu +k+1, j \mu + k +2, \ldots,j \mu  +m \} & \text{ if } k+1< m \le \mu.
\end{cases}
\]
All the lemmas in this subsection are for the case when $\DD$ is generated in this manner. Observe that in this case $\GG_n = \FF_{\mu n}$. 

\begin{lemma} \label{lem:crossterms}
For all $i \neq j \in \{0, 1, \dots, \mu-1 \}$
\[ \lim_{n \to \infty} \E\left[ X_{\mu n+i} X_{\mu n+j} \vert \FF_{\mu n} \right] = \eta^2, \]
where $\eta = \frac{1}{\mu}((2p-1){k}s^* +(2\lambda-1)(\mu-k))$. 
\end{lemma}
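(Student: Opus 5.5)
Without loss of generality take $i<j$. We condition successively and use the one-step conditional mean \eqref{Eq: conditional_expectation}. By the tower property,
\[
\E\left[X_{\mu n+i}X_{\mu n+j}\mid\FF_{\mu n}\right]=\E\left[X_{\mu n+i}\,\E\left[X_{\mu n+j}\mid \FF_{\mu n+j-1}\right]\mid\FF_{\mu n}\right].
\]
Here we use that $X_{\mu n+i}$ is $\FF_{\mu n+j-1}$-measurable when $i\le j-1$; the degenerate index $i=0$ means $X_{\mu n}$, which is itself $\FF_{\mu n}$-measurable. By \eqref{Eq: conditional_expectation},
\[
\E\left[X_{\mu n+j}\mid\FF_{\mu n+j-1}\right]=(2p-1)\frac{S^{D^c}_{\mu n+j-1}}{\mu n+j-1}+(2\lambda-1)\frac{|D_{\mu n+j-1}|}{\mu n+j-1}.
\]

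The key observation is that this quantity equals a deterministic-limit term plus an error that is uniformly small. First, $S^{D^c}_{\mu n+j-1}$ differs from $S^{D^c}_{\mu n}$ by at most $\mu$ in absolute value. Second, in the arithmetic progression case $|D_{\mu n+j-1}|=n(\mu-k)+O(1)$, since $|D_{\mu n}|=n(\mu-k)$ exactly. Hence
\[
\E\left[X_{\mu n+j}\mid\FF_{\mu n+j-1}\right]=(2p-1)\frac{S^{D^c}_{\mu n}}{\mu n}+(2\lambda-1)\frac{\mu-k}{\mu}+\epsilon_n ,
\]
where $|\epsilon_n|\le C/n$ deterministically. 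The main term $Z_n:=(2p-1)\frac{S^{D^c}_{\mu n}}{\mu n}+(2\lambda-1)\frac{\mu-k}{\mu}$ is $\FF_{\mu n}$-measurable. Therefore
\[
\E\left[X_{\mu n+i}X_{\mu n+j}\mid\FF_{\mu n}\right]=Z_n\,\E\left[X_{\mu n+i}\mid\FF_{\mu n}\right]+O(1/n),
\]
because $|X_{\mu n+i}|\le 1$.

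The same argument applied to $\E[X_{\mu n+i}\mid\FF_{\mu n}]$ gives $Z_n+O(1/n)$. For $i\ge 1$, condition on $\FF_{\mu n+i-1}$ and apply the tower property, incurring the same $O(1/n)$ error. For $i=0$ we instead get $X_{\mu n}$ itself, and that case would have to be excluded or treated separately; I expect the intended index set to be $\{1,\dots,\mu\}$, so I will work with $1\le i<j\le\mu$. It follows that
\[
\E\left[X_{\mu n+i}X_{\mu n+j}\mid\FF_{\mu n}\right]=Z_n^2+O(1/n).
\]

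It remains to identify $\lim Z_n$ almost surely. Lemma \ref{Lem: SLLN_for_S_mu_n}, or more precisely \eqref{Eq: SLLN_for_vector_S_mu_n}, gives $S^{D^c}_{\mu n}/n\to s_1^*=\frac{(2\lambda-1)k(\mu-k)}{\mu-(2p-1)k}=k s^*$ with $\E\tau=k$ and $\E\sigma=\mu-k$. Hence $S^{D^c}_{\mu n}/(\mu n)\to ks^*/\mu$, and
\[
Z_n\to\frac{(2p-1)ks^*+(2\lambda-1)(\mu-k)}{\mu}=\eta \quad \text{almost surely}.
\]
This yields the claim. The only delicate points are bookkeeping ones: checking that the $O(1/n)$ errors are uniform (they are, since all increments are bounded by $1$ and the within-block shifts are at most $\mu$), and the indexing convention for $i=0$ noted above. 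I expect no analytic obstacle beyond invoking the a.s. convergence of $\bS_{\mu_n}/n$.
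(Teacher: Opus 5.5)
Your proof is correct and follows essentially the same route as the paper: the tower property through $\FF_{\mu n+j-1}$, the one-step conditional mean \eqref{Eq: conditional_expectation} with the within-block shifts absorbed into a uniform $O(1/n)$ error, and then $S^{D^c}_{\mu n}/n\to ks^*$ from the strong law for $\bS_{\mu_n}/n$. Your reading of the index set is also right. The paper's own proof works with $1\le i<j\le\mu$, and for $i=0$ the variable $X_{\mu n}$ is $\FF_{\mu n}$-measurable, so the stated limit $\eta^2$ would generally fail there.
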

\begin{proof}
    Consider $i, j$ such that $1 \leq i < j \leq \mu$. We have,
    \[ \E[X_{\mu n+i}X_{\mu n+j}\vert \FF_{\mu n}] = \E \left[ X_{\mu n+i} \E[X_{\mu n+j}\vert  \FF_{\mu n + j-1} ] \Bigm| \FF_{\mu n} \right]. \]
    Since, \[\E[X_{\mu n+j}\vert \FF_{\mu n + j-1}] = (2p-1) \frac{S_{\mu n+j-1}^{D^c}}{\mu n+j-1} + (2\lambda-1) \frac{|D_{\mu n+j-1}|}{\mu n+j-1}.\]
    and, $S_{\mu n+j-1}^{D^c} = S_{\mu n}^{D^c} + \sum_{r=1}^{j-1} \I_{\{\mu n+r\in D^c\}} X_{\mu n+r}$. 
It follows that
\begin{align*}
E[X_{\mu n+j}\mid  \FF_{\mu n + j-1} ]
&=
(2p-1)
\frac{
S_{\mu n}^{D^c}
+
\sum_{r=1}^{j-1}
\I_{\{\mu n+r\in D^c\}}
X_{\mu n+r}
}
{\mu n+j-1}
\\
&\qquad
+
(2\lambda-1)
\frac{|D_{\mu n+j-1}|}{\mu n+j-1}.
\end{align*}
Substituting this into the tower property gives
\begin{align*}
\E[X_{\mu n+i}X_{\mu n+j}\mid \FF_{\mu n}]
&=
\frac{2p-1}{\mu n+j-1}
E\left[
X_{\mu n+i}
\left(
S_{\mu n}^{D^c}
+
\sum_{r=1}^{j-1}
\I_{\{\mu n+r\in D^c\}}
X_{\mu n+r}
\right)
\Bigm|
\FF_{\mu n}
\right]
\\
&\qquad
+
(2\lambda-1)
\frac{|D_{\mu n+j-1}|}{\mu n+j-1}
\E[X_{\mu n+i}\mid \FF_{\mu n}].
\end{align*}
Expanding the first expectation,
\begin{align*}
\E[X_{\mu n+i}X_{\mu n+j}\mid \FF_{\mu n}]
&= \frac{2p-1}{\mu n+j-1}
\sum_{r=1}^{j-1}
\I_{\{\mu n+r\in D^c\}}
\E[X_{\mu n+i}X_{\mu n+r}\mid \FF_{\mu n}]
\\
& + \frac{2p-1}{\mu n+j-1}
S_{\mu n}^{D^c}
E[X_{\mu n+i}\mid \FF_{\mu n}] +
(2\lambda-1)
\frac{|D_{\mu n+j-1}|}{\mu n+j-1}
\E[X_{\mu n+i}\mid \FF_{\mu n}].
\end{align*}

Since $j\le \mu$ is fixed, the sum in the first term contains at most $\mu$ terms. Since $\vert \E[X_{\mu n+i}X_{\mu n+r}\mid \FF_{\mu n}] \vert
\leq 1$, for some constant $C>0$
\begin{equation*} 
\left|
\frac{2p-1}{\mu n+j-1}
\sum_{r=1}^{j-1}
\I_{\{\mu n+r\in D^c\}}
\E[X_{\mu n+i}X_{\mu n+r}\mid \FF_{\mu n}]
\right|
\le
\frac{C}{n}    
\end{equation*}
Thus,
\begin{align*}
\E[X_{\mu n+i}X_{\mu n+j}\mid \FF_{\mu n}]
&=
\Bigg(
(2p-1)\frac{S_{\mu n}^{D^c}}{\mu n+j-1}
+ (2\lambda-1)\frac{|D_{\mu n+j-1}|}{\mu n+j-1}
\Bigg)
\E[X_{\mu n+i}\mid \FF_{\mu n}] +o(1).
\end{align*}
Since $i$ and $j$ are fixed,
\[
\frac{S_{\mu n}^{D^c}}{\mu n+j-1} = \frac{S_{\mu n}^{D^c}}{\mu n} +o(1)
\quad \text{ and } \quad 
\frac{|D_{\mu n+j-1}|}{\mu n+j-1} =
\frac{\mu-k}{\mu}+o(1).
\]
Similarly,
\[ 
\E[X_{\mu n+i}\mid \FF_{\mu n}] = (2p-1)\frac{S_{\mu n}^{D^c}}{\mu n} + (2\lambda-1)\frac{\mu-k}{\mu}
+o(1).
\]
Using $\frac{S_{\mu n}^{D^c}}{n} \longrightarrow ks^*$, 
we obtain $\E[X_{\mu n+i}\mid \FF_{\mu n}]
\longrightarrow \eta$,
where $\eta = \frac{(2p-1)ks^*+(2\lambda-1)(\mu-k)}{\mu}$. Similarly, 
\[
(2p-1)\frac{S_{\mu n}^{D^c}}{\mu n+j-1}
+ (2\lambda-1)\frac{|D_{\mu n+j-1}|}{\mu n+j-1} \longrightarrow \eta.
\]
Therefore,
\[
\E[X_{\mu n+i}X_{\mu n+j}\mid \FF_{\mu n}]
\longrightarrow
\eta^2,
\]
which completes the proof.
\end{proof}

\begin{lemma}
\label{lem:Gamma}
Let $ \Delta \bM_{ \mu{n}}= \left(\Delta M_{\mu n}^{D^c}, \Delta M_{\mu n}^{D}\right)^\top$,
be as in \eqref{Eq: martingale_diff_vector}. 
Define $\Gamma \coloneqq\lim_{n \to \infty}  \E[ \Delta {\mathbf{M}}_{\mu n + \mu}  \Delta {\mathbf{M}}^\top_{\mu n + \mu} \vert \FF_{\mu n}]$. Then,
\[ \Gamma = (1-\eta^2) \begin{pmatrix}
     k & 0\\
     0 & \mu - k
 \end{pmatrix}, \]
where $\eta$ is as defined in Lemma~\ref{lem:crossterms}. Furthermore, $\Gamma$ is positive semi-definite. 
\end{lemma}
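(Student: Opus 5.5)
We will compute $\Gamma$ entrywise in the arithmetic progression setting, where $\mu_n=\mu n$, $\GG_n=\FF_{\mu n}$, the block $\{\mu n+1,\ldots,\mu n+k\}$ lies in $D^c$ and $\{\mu n+k+1,\ldots,\mu n+\mu\}$ lies in $D$. Write $U_n=\sum_{i=1}^{k}X_{\mu n+i}$ and $V_n=\sum_{i=k+1}^{\mu}X_{\mu n+i}$. By \eqref{Eq: martingale_diff_D_c} and \eqref{Eq: martingale_diff_D}, $\Delta M^{D^c}_{\mu n+\mu}=U_n-\E[U_n\mid\FF_{\mu n}]$ and $\Delta M^{D}_{\mu n+\mu}=V_n-\E[V_n\mid\FF_{\mu n}]$. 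Hence the entries of $\E[\Delta\bM\Delta\bM^\top\mid\FF_{\mu n}]$ are conditional covariances. For example,
\[
\E[(\Delta M^{D^c})^2\mid\FF_{\mu n}]=\E[U_n^2\mid\FF_{\mu n}]-\E[U_n\mid\FF_{\mu n}]^2 ,
\]
and the other entries have the same form with $V_n$ in place of $U_n$, or with $U_nV_n$ in the first term.

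The first step is to expand the squares and products into single terms and cross terms. Since $X_j^2=1$, we get $\E[U_n^2\mid\FF_{\mu n}]=k+\sum_{i\ne j\le k}\E[X_{\mu n+i}X_{\mu n+j}\mid\FF_{\mu n}]$. By Lemma \ref{lem:crossterms}, each of the $k(k-1)$ cross terms tends to $\eta^2$ almost surely. This gives $\E[U_n^2\mid\FF_{\mu n}]\to k+k(k-1)\eta^2$. Similarly, $\E[V_n^2\mid\FF_{\mu n}]\to(\mu-k)+(\mu-k)(\mu-k-1)\eta^2$ and $\E[U_nV_n\mid\FF_{\mu n}]\to k(\mu-k)\eta^2$.

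The second step is to compute the limits of the conditional means. The proof of Lemma \ref{lem:crossterms} already showed $\E[X_{\mu n+i}\mid\FF_{\mu n}]\to\eta$ for each fixed $i\le\mu$. That argument used \eqref{Eq: conditional_expectation}, iterated over finitely many steps each contributing $O(1/n)$ corrections, together with $S^{D^c}_{\mu n}/n\to ks^*$ (Lemma \ref{Lem: SLLN_for_S_mu_n}) and $|D_{\mu n+j}|/(\mu n+j)\to(\mu-k)/\mu$. Consequently $\E[U_n\mid\FF_{\mu n}]\to k\eta$ and $\E[V_n\mid\FF_{\mu n}]\to(\mu-k)\eta$.

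Subtracting now gives the diagonal entries $k+k(k-1)\eta^2-k^2\eta^2=k(1-\eta^2)$ and $(\mu-k)(1-\eta^2)$. The off-diagonal entry is $k(\mu-k)\eta^2-k(\mu-k)\eta^2=0$. This is exactly the claimed matrix.

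The main obstacle is justifying that the limits hold almost surely, uniformly enough to pass through the finitely many cross terms. Since the block length $\mu$ is fixed, only finitely many terms appear, so pointwise almost-sure convergence of each term suffices. We therefore only need Lemma \ref{lem:crossterms} for index pairs that straddle the $D^c$/$D$ boundary, and its proof does not distinguish these pairs from the others. Positive semi-definiteness then follows because $|\eta|\le1$: each $X_j\in\{\pm1\}$, so its conditional mean lies in $[-1,1]$, and hence $\eta$, being a limit of such conditional means, satisfies $|\eta|\le1$. Therefore $1-\eta^2\ge0$, and $\Gamma$ is a nonnegative multiple of a diagonal matrix with nonnegative entries $k$ and $\mu-k$. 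Alternatively, positive semi-definiteness holds simply because $\Gamma$ is a limit of conditional covariance matrices.
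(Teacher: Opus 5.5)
Your proposal is correct and follows essentially the same route as the paper: write $\Gamma$ as the limit of $\E[\mathbf{X}\mathbf{X}^\top\mid\FF_{\mu n}]-\E[\mathbf{X}\mid\FF_{\mu n}]\,\E[\mathbf{X}\mid\FF_{\mu n}]^\top$, evaluate the second-moment matrix with Lemma~\ref{lem:crossterms}, identify the limiting mean vector as $\eta\,(k,\mu-k)^\top$, and subtract. The only cosmetic difference is that you get the mean vector directly from $\E[X_{\mu n+i}\mid\FF_{\mu n}]\to\eta$, whereas the paper packages the same computation through $A_n\to A+I$ and the matrix $B$; positive semi-definiteness is justified the same way in both.
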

\begin{proof}
Let ${\mathbf{S}}_{\mu n} = \begin{pmatrix} \frac{S^{D^c}_{\mu n}}{n}, \frac{S^{D}_{\mu n}}{n} \end{pmatrix}^\top$ and ${\mathbf{X}}_{\mu n} = \begin{pmatrix} \sum_{j=1}^{k} X_{\mu n+j}, \; \sum_{j=k+1}^{\mu} X_{\mu n + j} \end{pmatrix}^\top$. Then, 
we have
\[ \E[{\mathbf{X}}_{\mu n + \mu} \vert \FF_{\mu n}] = A_n {\mathbf{S}}_{\mu n} + {\bf \beta}_{\mu n}, \] 
where as $n \to \infty$ , $A_n \to A + I$ and ${\bf \beta_{\mu n}} \to (2 \lambda -1) \dfrac{\mu-k}{\mu} \begin{pmatrix} k \\ \mu-k \end{pmatrix}$. We have,
\begin{eqnarray*}
  \E[ \Delta {\mathbf{M}}_{\mu n + \mu}  \Delta {\mathbf{M}}^\top_{\mu n + \mu} \vert \FF_{\mu n}] &=& \E\left[ \left( {\mathbf{X}_{\mu n + \mu}} - \E[{\mathbf{X}_{\mu n + \mu}} \vert \FF_{\mu n}] \right)   \left( {\mathbf{X}_{\mu n + \mu}} - \E[{\mathbf{X}_{\mu n + \mu}} \vert \FF_{\mu n}] \right)^\top \vert \FF_{\mu n} \right]   \\
  &=& \E\left[ {\mathbf{X}_{\mu n + \mu}}{\mathbf{X}^\top_{\mu n + \mu}}  \vert \FF_{\mu n} \right] - \E[{\mathbf{X}_{\mu n + \mu}} \vert \FF_{\mu n}] \E[{\mathbf{X}_{\mu n + \mu}} \vert \FF_{\mu n}]^\top     \\
\end{eqnarray*}
So,
\begin{eqnarray} \label{eq:gamma}
    \Gamma &=& \lim_{n \to \infty}  \E[ \Delta {\mathbf{M}}_{\mu n + \mu}  \Delta {\mathbf{M}}^\top_{\mu n + \mu} \vert \FF_{\mu n}] \nonumber \\
    &=& \lim_{n \to \infty} \E\left[ {\mathbf{X}_{\mu n + \mu}}{\mathbf{X}^\top_{\mu n + \mu}} \vert \FF_{\mu n} \right] - \lim_{n \to \infty} \E[{\mathbf{X}_{\mu n + \mu}} \vert \FF_{\mu n}] \E[{\mathbf{X}_{\mu n + \mu}} \vert \FF_{\mu n}]^\top  \nonumber \\
      &=& \lim_{n \to \infty} \E\left[ {\mathbf{X}_{\mu n + \mu}}{\mathbf{X}^\top_{\mu n + \mu}}\vert \FF_{\mu n} \right] \nonumber \\
      && - \left((A+I) s^* \begin{pmatrix} k \\ \mu-k \end{pmatrix}  + (2 \lambda -1) \frac{\mu - k}{\mu} \begin{pmatrix} k \\ \mu-k \end{pmatrix} \right) \left((A+I) s^* \begin{pmatrix} k \\ \mu-k \end{pmatrix}  + (2 \lambda -1) \frac{\mu-k}{\mu} \begin{pmatrix} k \\ \mu-k \end{pmatrix} \right)^\top  \nonumber \\
       &=& \lim_{n \to \infty} \E\left[ {\mathbf{X}_{\mu n + \mu}}{\mathbf{X}^\top_{\mu n + \mu}} \vert \FF_{\mu n} \right] - B \begin{pmatrix} k^2 & k(\mu-k)\\ k(\mu-k) & (\mu-k)^2 \end{pmatrix} B^\top,
\end{eqnarray}
where $B=(A+I)s^*+(2\lambda-1)\dfrac{\mu - k}{\mu}I$. Recall that $A+I
= \frac{2p-1}{\mu} \begin{pmatrix}
k & 0\\
\mu-k & 0
\end{pmatrix}$. We obtain,
\begin{align} \label{eq:term2}
& B \begin{pmatrix} k^2 & k(\mu-k)\\ k(\mu-k) & (\mu-k)^2 \end{pmatrix} B^\top \nonumber \\
&= \left((A+I)s^*+(2\lambda-1)\frac{\mu-k}{\mu}I\right)
\begin{pmatrix}
k^2 & k(\mu-k) \nonumber\\
k(\mu-k) & (\mu-k)^2
\end{pmatrix}
\left((A+I)s^*+(2\lambda-1)\frac{\mu-k}{\mu}I\right)^\top \nonumber \\
&=
\eta^2
\begin{pmatrix}
k^2 & k(\mu-k)\\
k(\mu-k) & (\mu-k)^2
\end{pmatrix}.
\end{align}
Using Lemma~\ref{lem:crossterms}, we get
\begin{align} \label{eq:crossterms}
& \lim_{n \to \infty} \E\left[ {\mathbf{X}_{\mu n + \mu}}{\mathbf{X}^\top_{\mu n + \mu}} \vert \FF_{\mu n} \right] \nonumber \\
&=   \lim_{n \to \infty} \begin{pmatrix}
\E\left[\left(\sum\limits_{i=1}^{k}X_{\mu n+i}\right)^2\Bigm|\FF_{\mu n}\right]
&
\E\left[\left(\sum\limits_{i=1}^{k}X_{\mu n+i}\right)
\left(\sum\limits_{j=k+1}^{\mu}X_{\mu n+j}\right)
\Bigm|\FF_{\mu n}\right]
\\
\E\left[\left(\sum\limits_{i=1}^{k}X_{\mu n+i}\right)
\left(\sum\limits_{j=k+1}^{\mu}X_{\mu n+j}\right)
\Bigm|\FF_{\mu n}\right]
&
\E\left[\left(\sum\limits_{i=k+1}^{\mu}X_{\mu n+i}\right)^2
\Big|\FF_{\mu n}\right]
\end{pmatrix}  \nonumber \\
&=
\begin{pmatrix}
k + 2\sum_{1\le i<j\le k} \E[X_{\mu n+i}X_{\mu n+j}|\FF_{\mu n}]
&
\sum_{i=1}^{k}\sum_{j=k+1}^{\mu} \E[X_{\mu n+i}X_{\mu n+j}|\FF_{\mu n}]
\\
\sum_{i=1}^{k}\sum_{j=k+1}^{\mu} \E[X_{\mu n+i}X_{\mu n+j}|\FF_{\mu n}]
&
\mu-k + 2 \sum_{k+1\le i<j\le\mu} \E[X_{\mu n+i}X_{\mu n+j}|\FF_{\mu n}]
\end{pmatrix} \nonumber \\
& =  \begin{pmatrix}
    k + k(k-1) \eta^2 & k(\mu-k) \eta^2 \\ k(\mu-k) \eta^2 & \mu-k + (\mu-k)(\mu-k-1) \eta^2
\end{pmatrix}.
\end{align}
Substituting \eqref{eq:crossterms} and \eqref{eq:term2} into \eqref{eq:gamma} we get
\[ \Gamma = (1-\eta^2) \begin{pmatrix}
    k & 0 \\ 0 & \mu-k \end{pmatrix}. \]
By definition, $\Gamma$ is positive semi-definite. This can also be verified directly by showing that $|\eta | \leq 1$. Further for $0 < k < \mu$, the matrix $\Gamma$ is positive definite, except when $(p, \lambda) = (1, 1)$ or $(p, \lambda) = (1, 0)$, in which case $\Gamma$ is the zero matrix.
\end{proof}

\begin{lemma}
\label{Lemma: error_diffusive_case}
Let $\br_{ \mu{n}}$ be the error as in \eqref{Eq: error_vector}. Then, as $n \to \infty$, 
\begin{equation}
\label{Eq: rate_second_moment_error_r_n}
(n+1)\E \left[\lVert \br_{ \mu{n}} \rVert^2\right] \rightarrow 0,
\end{equation}
and 
\begin{equation}
\label{Eq: rate_of_sum_of_errors}
\sum_{k=1}^n \br_{ \mu{n}} = o\left(\sqrt\frac{n}{\log n}\right), \text{ a.s.} 
\end{equation} 
Furthermore, if $\rho^*<\frac{1}{2}$ then for any $0< \delta_0< \frac{1}{2}$, 
\begin{equation}
\label{Eq: rate_of_sum_of_errors_super_diffusive}
\sum_{k=1}^n \br_{ \mu{n}} = o\left(n^{1-\rho^* -\delta_0}\right), \text{ a.s.} 
\end{equation} 
\end{lemma}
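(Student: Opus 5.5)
The plan is to exploit that, for an arithmetic progression, every quantity in Lemma \ref{lem:alpha_beta} is deterministic. This gives a \emph{deterministic} pathwise bound $\lVert \br_{\mu n}\rVert \le C/n$ for a constant $C$ independent of $n$ and $\omega$, and all three claims follow from it by summation. (In \eqref{Eq: rate_of_sum_of_errors} and \eqref{Eq: rate_of_sum_of_errors_super_diffusive} I read the summand as $\br_{\mu j}$, summed over $j=1,\dots,n$.)

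\textbf{Step 1: the deterministic error bound.} Since $\tau\equiv k$ and $\sigma\equiv \mu-k$, we have $\mu_n=n\mu$ and $|D_{\mu n}|=n(\mu-k)$ exactly. Hence $\alpha^{D^c}_{\mu n}$, $\beta^{D^c}_{\mu n}$, $\alpha^{D}_{\mu n}$ and $\beta^{D}_{\mu n}$ in \eqref{Eq: Def_alpha_D_c}, \eqref{Eq: Def_beta_D_c} and their $D$-analogues are non-random, so the conditional expectations given $\GG_n$ equal the quantities themselves.

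\begin{itemize}
\item Expanding the finite products $\prod_{l}(1+\frac{2p-1}{\mu n+l})$, which have at most $\mu$ factors, gives $1+\OO(1/n)$ uniformly. The same holds for $\frac{n}{\mu n+j}=\frac1\mu+\OO(1/n)$.
\item From these, $\alpha^{D^c}_{\mu n}=(2p-1)\frac{k}{\mu}+\OO(1/n)$ and $\alpha^{D}_{\mu n}=(2p-1)\frac{\mu-k}{\mu}+\OO(1/n)$.
\item Likewise, $\beta^{D^c}_{\mu n}=(2\lambda-1)\frac{k(\mu-k)}{\mu}+\OO(1/n)$ and $\beta^{D}_{\mu n}=(2\lambda-1)\frac{(\mu-k)^2}{\mu}+\OO(1/n)$. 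For $\beta^D$, the extra term $\sum_j \frac{(2\lambda-1)j}{\mu n+k+j}$ is at most $\mu^2/n$.
\end{itemize}

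All constants depend only on $p,\lambda,k,\mu$. This is exactly the content of \eqref{Eq: limit_of_alpha_beta_D_c} and \eqref{Eq: Eq: limit_of_alpha_beta_D}, which I would simply invoke, now with deterministic $\OO(1/n)$. Substituting into \eqref{Eq: error_D_c_mu_n} and \eqref{Eq: error_D_mu_n}, and using $|S^{D^c}_{\mu n}|/n\le |D^c_{\mu n}|/n = k$, gives $\lVert\br_{\mu(n+1)}\rVert\le C/n$ almost surely for all $n\ge1$.

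\textbf{Step 2: consequences.} From Step 1, $(n+1)\E\lVert\br_{\mu n}\rVert^2\le C^2(n+1)/(n-1)^2\to0$, which is \eqref{Eq: rate_second_moment_error_r_n}. For the sums, $\bigl\lVert\sum_{j=1}^n\br_{\mu j}\bigr\rVert\le C'\sum_{j\le n}1/j=\OO(\log n)$ almost surely. Since $\log n=o(\sqrt{n/\log n})$, this gives \eqref{Eq: rate_of_sum_of_errors}. For \eqref{Eq: rate_of_sum_of_errors_super_diffusive}, note that $\rho^*<\tfrac12$ and $\delta_0<\tfrac12$ imply $1-\rho^*-\delta_0>0$. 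Hence $\log n=o(n^{1-\rho^*-\delta_0})$.

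\textbf{Main obstacle.} The only delicate point is Step 1: one must make sure the $\OO(1/n)$ in Lemma \ref{lem:alpha_beta} is uniform and deterministic. This includes the term where $\alpha^{D^c}_{\mu n}$ and $\beta^{D^c}_{\mu n}$ feed into the $D$-block, which the proof of Lemma \ref{lem:alpha_beta} hid inside $\OO(1/\mu_n)$. In the arithmetic case I would verify it directly: every sum has at most $\mu$ terms, each term is bounded, and each carries a prefactor $1/(\mu n+j)$. No probabilistic estimate is needed.
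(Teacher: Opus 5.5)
Your proposal is correct and takes essentially the same approach as the paper. In both, Lemma~\ref{lem:alpha_beta} in the arithmetic case (where $\mu_n=n\mu$ and $|D_{\mu n}|=n(\mu-k)$ are deterministic) gives a uniform pathwise bound $\lVert\br_{\mu n}\rVert\le C/n$, and summing then gives $\OO(\log n)$. Your use of $|S^{D^c}_{\mu n}|/n\le k$ (the paper writes $\le 1$) and your explicit handling of the cross-feed term hidden in $\OO(1/\mu_n)$ in the $D$-block are small but genuine tightenings of the paper's argument.
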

\begin{proof}
 Recall from \eqref{Eq: error_vector}, we have $\br_{ \mu{n}}:= \left( r_{\mu n}^{D^c}, r_{\mu n}^{D}\right)^\top$, where 
\[
r_{\mu{n+\mu}}^{D^c}:= \left(\E \left[\alpha_{\mu{n+\mu}}^{D^c}| \FF_{\mu n}\right]-\alpha^{D^c}\right)\frac{S_{\mu n}^{D^c}}{n}+ \E \left[\beta_{\mu{n+\mu}}^{D^c}| \FF_{\mu n}\right]- \beta^{{D^c}}
\]
and 
\[
r_{\mu{n+\mu}}^{D}:= \left(\E \left[\alpha_{\mu{n+\mu}}^{D} | \FF_{\mu n} \right]-\alpha^{D}\right)\frac{S_{\mu n}^{D^c}}{n}+ \E \left[\beta_{\mu{n+\mu}}^{D} | \FF_{\mu n} \right]- \beta^{{D}}
\]
from equations \eqref{Eq: error_D_c_mu_n} and \eqref{Eq: error_D_mu_n}.
Since $\lvert \frac{S_{\mu n}^{D^c}}{n}\rvert \le 1,$ we have 
\[
\lvert r_{\mu{n+\mu}}^{D^c} \rvert ^2 \le 2 \left( \E \left[\alpha_{\mu{n+\mu}}^{D^c}| \FF_{\mu n}\right]-\alpha^{D^c}\right)^2+ 2\left(\E \left[\beta_{\mu{n+\mu}}^{D^c}| \FF_{\mu n}\right]- \beta^{D^c}\right)^2. 
\] 
From Lemma \ref{lem:alpha_beta}, we know that 
$\E[\alpha_{\mu n}^{D^c} \vert \FF_{\mu n}] = \alpha^{D^c} + \OO\left(\frac{1}{n}\right)$, $\E[\beta_{\mu n}^{D^c} \vert \FF_{\mu n}] = \beta^{D^c} + \OO\left(\frac{1}{n}\right)$. Therefore, by the dominated convergence theorem, we have for an appropriate constant $C>0$, as $n \to \infty$
\begin{equation}
\label{Eq: r_mu_n_D_c_cgs_moment_2}
(n+1)\E\left[ \left(r_{\mu{n+\mu}}^{D^c}\right)^2\right] \le  C \left(\frac{1}{n}\right) \longrightarrow 0. 
\end{equation}
We can similarly prove $ (n+1)\E\left[ \left(r_{\mu{n+\mu}}^{D}\right) ^2\right]\longrightarrow 0$ as $n \to \infty$, and hence this together with \eqref{Eq: r_mu_n_D_c_cgs_moment_2} proves \eqref{Eq: rate_second_moment_error_r_n}.
Since, $\br_{ \mu{n}} = \OO(\frac{1}{n}), \text{ a.s.}$, we have 
as $n \to \infty$
\begin{equation*}
\sum_{k=1}^n \br_{ \mu{k}} = \mathcal{O}\left(\log n\right), \text{ a.s.}
\end{equation*} which shows 
\begin{equation}
\label{Eq: rate_of_sum_error1}
\sqrt{\frac{\log n}{n}}\sum_{k=1}^n  \br_{ \mu{n}} \le C \sqrt{\frac{(\log n)^3}{n}}\rightarrow 0, \text{ a.s.},
\end{equation}
which proves \eqref{Eq: rate_of_sum_of_errors}. Furthermore, we also get as $n \to \infty$, 
\begin{equation*}
\label{Eq: rate_of_sum_error2}
\sum_{k=1}^n  \br_{ \mu{n}} =\OO \left( \frac{\log n}{n^{1-\rho^* -\delta_0}} \right) \longrightarrow 0, \text{ a.s.}
\end{equation*}
since $1-\rho^* - \delta_0>0$, as $\rho^*< \frac{1}{2}$, and $0< \delta_0 < \frac{1}{2}$.
\end{proof}

\begin{remark}
    Lemma \ref{lem:Gamma} can be proven for a general $\DD$ satisfying (\textbf{A1}) and (\textbf{A2}). However the rates in Lemma \ref{Lemma: error_diffusive_case} do not hold in a general setting, as in this case  $\E[\alpha_{\mu_n}^{D^c} \vert \FF_{\mu n}] - \alpha^{D^c} = (2p-1)\E[\tau]\left(\frac{n}{\mu_n} - \frac{1}{\mu}\right)  + \OO\left(\frac{1}{n}\right)$,  $\E[\beta_{\mu_n}^{D^c} \vert \FF_{\mu n}] - \beta^{D^c} = (2\lambda-1)\E[\tau]\left(\frac{|D_{\mu_n}|}{\mu_n} - \frac{\E[\sigma]}{\mu}\right) + \OO\left(\frac{1}{n}\right)$ and so on. Thus, the rate of convergence of $ \br_{ \mu_{n}}$ to zero depends on the rate of convergence of $\frac{\mu_n}{n}$ to $\mu$ in this case.
\end{remark}




\color{black}

\begin{proof}[Proof of Theorem \ref{Thm: anamolous_diffusion_Dn_non_trivial}]
Recall that $\frac{\bS_{\mu n}}{\mu n} \longrightarrow  \bs^*, \text{ a.s.}$. 
We start by calculating the threshold value for $p$ with respect to the minimum eigenvalue $\rho^*$.
As seen in Lemma \ref{Lem: eigenvalues_of_A}, when $p\leq \frac{1}{2}$, $\rho^* = 1 > \frac{1}{2}$. Thus, we focus on $p>\frac{1}{2}$. Here, $\rho^* = 1-(2p-1)\frac{k}{\mu} = \frac{1}{2}$ iff $(2p-1)\frac{k}{\mu} = \frac{1}{2}$. Thus, we have the regime change at $p= \frac{1}{2} + \frac{\mu}{4k}$. Observe that we have the three cases based on the value of $\frac{k}{\mu}$, since when $\frac{k}{\mu} < \frac{1}{2}$ all values of $p$ satisfy the condition for $\rho^* > \frac{1}{2}$.

Lemma \ref{lem: martingale_difference_bounded} shows that conditions \eqref{eq: SA CLT cond1 rho>1/2} and \eqref{Eq: Decay_martingale_difference_rho_half} are satisfied by the martingale differences $\Delta M_{\mu n}$.
Next, observe that Lemma \ref{lem:Gamma} shows that there exists a positive semi-definite matrix $\Gamma$ that satisfies \eqref{eq: SA CLT cond2 rho>1/2}, \eqref{Eq: Gamma_as_limit} and \eqref{Eq:_second_moment_martingale_difference_rho_leq_half}. Similary using Lemma \ref{Lemma: error_diffusive_case}, we can say that $r_{\mu n}$ satisfies \eqref{Eq: error_for_second_eigenvalue_leq_1/2}, \eqref{Eq: sum_of_rn} and \eqref{Eq: error_rate_rho_leq_half} respectively depending on the value of $\rho^*$. 
Thus, all conditions in Theorem \ref{Thm: CLT_SA_eigenvalue_more_than_half} are satisfied and we have a CLT with $\sqrt{n}$ scaling when $\rho^* > \frac{1}{2}$. 
Observe that $h(\theta) = A \theta + \beta$ satisfies conditions \eqref{Eq: Taylor_expnasion_for_theta} and \eqref{Eq: Taylor_expnasion_for_theta_1}.
Thus, all the conditions in Theorem \ref{Thm: CLT_SA_for_rho_equals_1/2} are satisfied which gives us the appropriate CLT for $\rho^* = \frac{1}{2}$. Similarly, since conditions in Theorem \ref{Thm: CLT_SA_for_rho_leq_1/2} are satisfied, $\frac{S_{\mu n}}{n^{1-\rho^*}}$ converges to a limiting random variable $\xi$ a.s. when $\rho^* < \frac{1}{2}$.
By the Cram\'{e}r-Wold device (see Theorem 15.55 \cite{klenke2008probability}), we get the required results for $S_{\mu n}$.

Now observe that given any $n$, there exists $k(n) \ge 0$ and $1 \le b(n) \le \mu$, such that, $n = k(n) \mu + b(n)$. Therefore, $S_{n}= S_{ \mu k(n)}+ \sum_{j=k(n) \mu +1}^{b(n)}X_j$. If $f(n)$ denotes the required scaling in the various regimes, then we know that $f(n)\longrightarrow \infty $ as $n \to \infty$. Moreover, $\frac{f(k(n)\mu)}{f(n)} \longrightarrow 1$, as $n \to \infty$.  Write 
\[
\frac{S_n - n s^*}{f(n)}= \frac{f(k(n)\mu)}{f(n)}\frac{S_{ \mu k(n)} - k(n)\mu s^*}{f(k(n)\mu)}+ \frac{\sum_{j=k(n) \mu +1}^{b(n)}X_j - b(n) s^*}{f(n)} 
\]
Since $1 \le b(n) \le \mu$, we get $\Big | \frac{\sum_{j=k(n) \mu +1}^{b(n)}X_j - b(n) s^*}{f(n)}\Big | \le \frac{C}{f(n)} \longrightarrow 0$ for an appropriate constant $C>0$. Furthermore, since $k(n) \longrightarrow \infty,$ as $n \to \infty$, the required result is obtained.
\end{proof}
\begin{proof}
[Proof of Corollary \ref{Cor: Anomolous_diffusion}]
 Observe that for any $n \in \bN$, there exists $k(n) \ge 0$, and $1 \le b(n) \le \mu$, such that, $n = k(n) \mu +b(n)$, and $k(n) \to \infty$, as $n \to \infty$. In this case, $\lvert D_n^c\rvert = k(n)k+ \min\{k, b(n)\}$, and hence,
\[
\frac{\lvert D_n^c\rvert}{n}= \frac{k(n)k}{n}+ \frac{\min\{k, b(n)\}}{n} \longrightarrow \frac{k}{\mu}, 
\]
since $1 \le \min\{k, b(n)\} \le \mu$.
The rest of the proof is an immediate consequence of Theorem \ref{Thm: anamolous_diffusion_Dn_non_trivial}.
\end{proof}
\begin{proof}[Proof of Proposition \ref{prop: limiting variance}] 
According to Theorem \ref{Thm: CLT_SA_eigenvalue_more_than_half}, the variance-covariance matrix when $\rho^*>\frac{1}{2}$ is given by $\int_{0}^{\infty}{\left(e^{(A+\frac{1}{2}I) u}\right)^\top \Gamma e^{(A+\frac{1}{2}I) u} \mathrm{d}u}$,
where $\Gamma$ is the matrix obtained in Lemma \ref{lem:Gamma}. Similarly, by Theorem \ref{Thm: CLT_SA_for_rho_equals_1/2}, when $\rho^*=\frac{1}{2}$ the variance-covariance matrix is given by $ \lim_{n \to \infty}\frac{1}{\log n}\int_{0}^{\log n} {\left(e^{A+\frac{1}{2}I) u}\right)^\top \Gamma e^{(A+\frac{1}{2}I) u} \mathrm{d}u}$. Our limiting variance is the sum of the individual entries of the variance-covariance matrix. Calculations of the above integrals can be found in Appendix B.
\end{proof}

\section{Further discussion on examples} \label{sec: Examples}
In this section, we discuss in detail the examples introduced in Subsection \ref{Def: examples} and explain how Assumptions (\textbf{A1}) and (\textbf{A2}) are satisfied. 
\begin{example}[Markov chain]
Consider a time-homogeneous irreducible Markov chain $\{L_n\}_{n \ge 1}$ taking values in $\{0, 1\}$, such that $L_1=1$ w.p. $1$. The set $D^c_n$ is constructed recursively as follows: Let $D^c_1 = \{1\}$, and 
\begin{equation}
\label{Eq: Def_D_n}
D^c_{n+1}= \begin{cases}
D^c_n \cup \{n+1\}, & \text{ if } L_{n+1}=1,\\
D^c_n, & \text{ otherwise.}
\end{cases}
\end{equation}
Let the transition probabilities be given by 
\begin{equation*}
\PP\left(L_{n+1}=i \Big \vert L_n=i\right)=\pi_i, 
\end{equation*}
for $i=0,1$. Since we assume that $\{L_n\}_{n \ge 1}$ is irreducible, thus $0< \pi_i <1$ for $i=0,1$. Since $\{L_n\}_{ n\ge 1}$ is Markovian, hence the inter-arrival times are i.i.d.\, and independent of each other, satisfying assumption (\textbf{A1}).

Furthermore, since $\{L_n\}_{n \ge 1}$ is a recurrent chain, which implies that for all $n \ge 1$, we have $\PP \left(\tau_n < \infty\right)=1$ and $\PP \left(\sigma_n < \infty\right)=1$. Observe that by definition of the inter-arrival time, we have $L_{\tau_1+1}=0$, and $L_{\mu_n+\tau_{n+1}+1}=0$.And similarly, $L_{\mu_n+1}=1$ for all $n \ge 0$.  Observe that for $k \ge 1$
\[
\PP \left(\tau_1 =k \right)= \PP \left(L_{k+1}=0 \Big \vert L_1=1\right)= \pi_1^{k-1} (1-\pi_1).
\]
Similarly, since $\PP \left(\tau_1 < \infty \right)=1$ and by Strong Markov property, we obtain 
\begin{eqnarray*}
\PP \left(\sigma_1 =k \right) & = & \PP \left(L_{\tau_1+k+1}=1 \Big \vert \tau_1 <\infty\right)\\
& = & \PP \left(L_{\tau_1+k+1}=1 \Big \vert L_{\tau_1 }=0\right)\\
& = & \PP \left(L_{k+1}=1 \Big \vert L_{1}=0\right)=\pi_0^{k-1} (1-\pi_0).
\end{eqnarray*}
Similarly, it follows that $\tau_n \sim Geo((1-\pi_1))$, and $\sigma_n \sim Geo((1-\pi_0))$, where the notation $X \sim Geo(r)$ implies that $\PP\left(X=k\right)= (1-r)^{k-1}r$, for $k \ge 1$. It is immediate that $\E \left[\tau \right]= \frac{1}{1-\pi_1}$ and $\E \left[\sigma \right]= \frac{1}{1-\pi_0}$, and $\E[e^{t\tau}] = \frac{(1-\pi_1)e^t}{1-\pi_1 e^t}$ for $t < - \ln \pi_1$ and $\E[e^{t\sigma}] = \frac{(1-\pi_0)e^t}{1-\pi_0 e^t}$ for $t < - \ln \pi_0$. Choose $\delta = \min (- \ln \pi_1, - \ln \pi_0)$, and thus assumption (\textbf{A2}) is satisfied.
\end{example}
\begin{example}[I.i.d.] If $\{L_n\}_{n \ge 1}$ be i.i.d.\, such that, $\PP(L_n=1)=\pi_1$ where $0 < \pi_1 <1$, and we assume that $L_1 =1$, w.p. $1$. Then it is follows immediately that the inter-arrival times satisfy the assumption (\textbf{A1}).

Furthermore, it follows immediately that $\tau_n \sim Geo((1-\pi_1))$ and $\sigma_n \sim Geo(\pi_1)$, and hence assumption (\textbf{A2}) is satisfied.
\end{example}
\begin{example}[Arithmetic progression]
In the case of the arithmetic progression, we have $\PP(\tau=k)=1$, and
$\PP(\sigma=\mu-k)=1$ for some fixed $\mu, k \in \bN$. Therefore, it is immediate that the assumptions (\textbf{A1}) and (\textbf{A2}) are satisfied.
\end{example}

\section*{Acknowledgment}
The authors are grateful to Vivek Borkar for his valuable insight on the various aspects of stochastic approximation. The authors are grateful to Andrew Wade for the discussion on renewal chains and regenerative processes. The authors would like to thank Rahul Roy for pointing out the appropriate literature on partial memory.

\section*{Grants}
VM acknowledges the financial support of the Deutsche Forschungsgemeinschaft (through Project-ID 444091549 within SPP-2265).

\section*{Appendix A}

In this section, we collect some of the results of the stochastic approximation techniques that we have used so far. In this section, all vectors are written as row vectors to match the notations commonly used in the stochastic approximation literature.
Let $(\theta_n)_{n \ge 1}$ be a sequence of random vectors taking values in $\Rbold^d$, and let it satisfy the following recursion,
\begin{equation}
\label{Eq: recursion_general_SA}
\theta_{n+1}=\theta_{n}+\frac{1}{n+1}\left(h(\theta_n)+ \Delta M_{n+1}+ r_{n+1}\right),
\end{equation}
where $h:\Rbold^d \longrightarrow \Rbold^d$ is called the \textit{mean field} function, $\Delta M_0\equiv 0, \, \left(\Delta M_n\right)_{n \ge0}$ is a martingale difference sequence w.r.t to the filtration $(\FF_n)_{n \ge 1}$, and $r_n$ is the \textit{error} or \textit{remainder} term.

The following theorem is a specialisation of the classical ODE method for stochastic approximation (see Theorem 2.2 in \cite{Borkar_Meyn2000}, \cite{Benaim1996}, or  Chapter 2 in \cite{Borkar}).
\begin{theorem}
\label{Thm: SA_SLLN}
Let $\theta_n$ be as in \eqref{Eq: recursion_general_SA}. Suppose the following assumptions are satisfied. 
\begin{enumerate}[i.]
\item $\sup_{n \ge 1} \lVert \theta_n \rVert < \infty$. 
\item $h$ is a Lipschitz function, that is, there exists $L>0$, such that, $\lVert h(x)-h(y) \rVert \le L \lVert x-y\rVert$.
\item $\sup_{n \ge 1} \E \left[ \lVert \Delta M_{n}\rVert ^2 |\FF_{n-1} \right] < \infty$.
\item Assume that $r_n \longrightarrow 0, \text{ a.s.}$ as $n \to \infty$.
\item Let $\theta^*$ be an equilibrium point of $h$, that is, $\theta^* \in \{\theta: \, h(\theta)=0\}$, such that $\langle \theta-\theta^*, h(\theta)\rangle <0$, for all $\theta \neq \theta^*$.
\end{enumerate}
Then, as $n \to \infty$
\begin{equation}
\label{Eq:SA_SLLN}
\theta_n \longrightarrow \theta^*, \text { a.s.}
\end{equation}
where the convergence is sample path dependent, in case, $\theta^*$ is not unique.
\end{theorem}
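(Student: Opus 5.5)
The statement is the ODE-method convergence theorem for the recursion \eqref{Eq: recursion_general_SA}. I would prove it by the standard Lyapunov-function argument, specialised to the strict monotonicity condition (v), rather than by the full asymptotic pseudotrajectory machinery. The natural Lyapunov function is $V(\theta)=\lVert \theta-\theta^*\rVert^2$. Condition (v) says that $\langle \nabla V(\theta), h(\theta)\rangle = 2\langle \theta-\theta^*, h(\theta)\rangle<0$ away from $\theta^*$.

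\textbf{Expansion.} Write $\gamma_{n+1}=\frac{1}{n+1}$ and expand
\[
V(\theta_{n+1}) = V(\theta_n) + 2\gamma_{n+1}\langle \theta_n-\theta^*, h(\theta_n)\rangle + 2\gamma_{n+1}\langle \theta_n-\theta^*, \Delta M_{n+1}+r_{n+1}\rangle + \gamma_{n+1}^2\lVert h(\theta_n)+\Delta M_{n+1}+r_{n+1}\rVert^2 .
\]
Each term is then controlled as follows.

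\begin{enumerate}[i.]
\item By (i) there is a random compact set $K$ containing all $\theta_n$. By (ii), $h$ is bounded on $K$.
\item By (iii), $\sum_n \gamma_{n+1}^2 \E[\lVert \Delta M_{n+1}\rVert^2\mid\FF_n]<\infty$ a.s. Hence $N_n=\sum_{m\le n}\gamma_{m}\Delta M_{m}$ is an $L^2$-bounded martingale (after localisation on the event $\{\sup\lVert\theta_n\rVert\le R\}$, letting $R\to\infty$), and it converges a.s.
\item Together with (iv), this gives $\sum_m \gamma_{m}\langle\theta_{m-1}-\theta^*,\Delta M_{m}\rangle$ convergent, and the quadratic term summable.
\end{enumerate}

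\textbf{Step 1: $\liminf_n V(\theta_n)=0$.} Suppose instead that $V(\theta_n)\ge \epsilon$ for all large $n$. By continuity of $\langle\theta-\theta^*,h(\theta)\rangle$ on the compact set $K\cap\{V\ge\epsilon\}$, it is bounded above by some $-c<0$. The error term $2\gamma_{n+1}\langle\theta_n-\theta^*,r_{n+1}\rangle$ is $o(\gamma_{n+1})$ by (iv) and boundedness. Summing the expansion then gives $V(\theta_n)\le C - c\sum\gamma_m\to-\infty$, since $\sum\gamma_m=\infty$. This is a contradiction.

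\textbf{Step 2: $\limsup_n V(\theta_n)=0$.} This is the main obstacle, because $r_n$ is only assumed to tend to $0$, not to be summable, so the usual Robbins–Siegmund argument does not apply directly. I would use an upcrossing argument.

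Fix $0<\epsilon$ and suppose $V$ crosses from below $\epsilon$ to above $2\epsilon$ infinitely often. Consider an upcrossing interval $[n_j,m_j]$.
\begin{itemize}
\item Since the increments of $\theta$ are $O(\gamma_n)$ plus a martingale part whose tails vanish, $V(\theta_{n_j})\to\epsilon$ at the start of each crossing.
\item On the crossing, $V\ge\epsilon$, so the drift contributes at most $-c\sum_{n_j}^{m_j}\gamma$.
\item The error contributes $o(1)\sum_{n_j}^{m_j}\gamma$.
\item The martingale and quadratic tails tend to $0$.
\end{itemize}
Hence the net increase of $V$ over the crossing is eventually negative. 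This contradicts the increase being at least $\epsilon$.

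Combining Step 1 and Step 2 gives $V(\theta_n)\to0$, that is, $\theta_n\to\theta^*$ a.s. If $\theta^*$ is not unique, condition (v) is understood relative to the chosen $\theta^*$. The argument then runs pathwise with $\theta^*$ fixed on the relevant event, which is where the sample-path dependence in the statement comes from.
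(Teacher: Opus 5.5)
Your argument is correct, but it takes a different route from the paper. The paper gives no proof of its own. It invokes the ODE method (Theorem 2.2 of Borkar--Meyn, Benaim, Chapter 2 of Borkar) and obtains (v) from Borkar's Lyapunov-function corollary with $V(\theta)=\frac12\lVert\theta-\theta^*\rVert^2$.

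In that framework one shows the interpolated iterates form an asymptotic pseudotrajectory of $\dot\theta=h(\theta)$. Their limit set is then a compact internally chain transitive invariant set of the flow, and the Lyapunov function forces this set to be $\{\theta^*\}$. The Lipschitz hypothesis (ii) is used there for the flow. The ``sample path dependent'' clause is a remnant of the general version with several equilibria.

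You instead work directly on the discrete recursion. Because $V$ is quadratic, the expansion of $V(\theta_{n+1})$ is exact. The bias term is $o(\gamma_{n+1})$ and is absorbed by the drift, which is at most $-c(\epsilon,\omega)\gamma_{n+1}$ on $\{V\ge\epsilon\}$. Your upcrossing step replaces the Robbins--Siegmund argument, which would need $\sum_n\gamma_n\lVert r_n\rVert<\infty$. Your route is more elementary and self-contained, and it needs only continuity of $h$, not (ii). The ODE method buys generality the statement does not use: arbitrary Lyapunov functions, sets of equilibria, general attractors.

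Three minor points:
\begin{itemize}
\item Localise with the stopping time $T_R=\inf\{n:\lVert\theta_n\rVert>R\}$, not on the non-adapted event $\{\sup_n\lVert\theta_n\rVert\le R\}$.
\item Convergence of $\sum_m\gamma_m\langle\theta_{m-1}-\theta^*,\Delta M_m\rangle$ comes from (i) and (iii), not (iv).
\item Condition (v) already forces $\theta^*$ to be the unique zero of $h$: a second zero $\theta'$ would give $\langle\theta'-\theta^*,h(\theta')\rangle=0$. So your closing remark on non-uniqueness is unnecessary, and the sample-path-dependence clause is vacuous under the stated hypotheses.
\end{itemize}
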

The above theorem can be considered as an equivalent for the Strong Law of Large numbers in the field of Stochastic approximation. 
In \cite{Borkar} Corollary 3, pg, 17, the condition v. is stated for $V(\theta)$ where $V(\theta)$ is a Liapunov function, and the equivalent condition is $\langle V(\theta), h(\theta)\rangle <0$. If we choose $V(\theta)= \frac{1}{2} \lVert \theta-\theta^* \rVert ^2$, then we get the above form of v. as in Theorem \ref{Thm: SA_SLLN}.
\begin{corollary}
\label{Cor: SA_SLLN}
If the mean field function can be expressed as $h(\theta) = \theta  \mathbf{A} + \beta$, for some matrix $\mathbf{A}$, then a sufficient condition equivalent to v. in Theorem \ref{Thm: SA_SLLN} is that the eigenvalues of $\mathbf{A}$ are all distinct and negative.   
\end{corollary}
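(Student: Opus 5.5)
The goal is to show that when $h(\theta)=\theta\mathbf{A}+\beta$ and the eigenvalues of $\mathbf{A}$ are distinct and negative, there is a unique equilibrium $\theta^*$ and a Lyapunov function $V$ with $\langle \nabla V(\theta), h(\theta)\rangle<0$ for all $\theta\neq\theta^*$. That is exactly the Lyapunov form of condition v. quoted from \cite{Borkar}; the plain inner-product form with $V(\theta)=\frac12\lVert\theta-\theta^*\rVert^2$ may fail for non-symmetric $\mathbf{A}$. The plan has three steps.

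\textbf{Step 1: equilibrium.} Since all eigenvalues of $\mathbf{A}$ are nonzero, $\mathbf{A}$ is invertible. Hence $h(\theta)=0$ has the unique solution $\theta^*=-\beta\mathbf{A}^{-1}$, and $h(\theta)=(\theta-\theta^*)\mathbf{A}$. Writing $x=\theta-\theta^*$, the ODE $\dot\theta=h(\theta)$ becomes the linear system $\dot x=x\mathbf{A}$.

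\textbf{Step 2: quadratic Lyapunov function.} Distinct eigenvalues make $\mathbf{A}$ diagonalizable, say $\mathbf{A}=P^{-1}\Lambda P$ with $\Lambda=\mathrm{diag}(\lambda_1,\dots,\lambda_d)$ and all $\lambda_i<0$. The eigenvalues are real in every application in the paper, namely the matrices in Lemma \ref{Lem: eigenvalues_of_A} and the scalar case. Set $y=xP^{-1}$, so that $\dot y=y\Lambda$, and define
\[
V(\theta)=\tfrac12\lVert(\theta-\theta^*)P^{-1}\rVert^2 .
\]
Then
\[
\langle\nabla V(\theta),h(\theta)\rangle=\sum_i\lambda_i y_i^2\le \max_i\lambda_i\,\lVert y\rVert^2<0
\]
for $y\neq0$, that is, for $\theta\neq\theta^*$. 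If one prefers not to diagonalize, the same conclusion follows from the Lyapunov equation $\mathbf{A}Q+Q\mathbf{A}^\top=-I$. Its solution $Q=\int_0^\infty e^{\mathbf{A}t}e^{\mathbf{A}^\top t}\,dt$ converges and is positive definite because $\mathbf{A}$ is Hurwitz. Taking $V(x)=xQx^\top$ gives $\dot V=-\lVert x\rVert^2$.

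\textbf{Step 3: global asymptotic stability.} This $V$ is a strict Lyapunov function: it is radially unbounded, vanishes only at $\theta^*$, and decreases strictly along trajectories. Therefore $\theta^*$ is the unique globally asymptotically stable equilibrium of the ODE. This is precisely what the ODE method in Theorem \ref{Thm: SA_SLLN} (via \cite{Borkar}, Corollary 3) needs in place of v., so the conclusion $\theta_n\to\theta^*$ a.s. follows.

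The main obstacle is interpretive rather than computational. Taken literally, condition v. uses the Euclidean $V$, and that requires $\mathbf{A}+\mathbf{A}^\top$ to be negative definite, which need not hold for the lower-triangular $A$ in \eqref{Eq: matrix_A}. So the corollary must be read as an equivalent Lyapunov condition with a weighted norm, as constructed in Step 2. Note also that distinctness is only a convenience here: negativity of the real parts of the eigenvalues alone suffices through the Lyapunov-equation route.
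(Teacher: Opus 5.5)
Your argument is correct, and it supplies something the paper does not: the paper states this corollary without proof. Its only support is the preceding remark that Borkar's condition is phrased through a Lyapunov function $V$, together with the suggestion that the Euclidean choice $V(\theta)=\frac12\lVert\theta-\theta^*\rVert^2$ recovers v. As you observe, that Euclidean choice cannot give the corollary for a general $\mathbf{A}$. For $\mathbf{A}=\begin{pmatrix}-1&0\\10&-2\end{pmatrix}$, which has distinct negative eigenvalues, and $x=(1,1)$, one gets $x\mathbf{A}x^\top=7>0$, so v. fails in its literal form. The word ``equivalent'' in the statement is also too strong, and as you note distinctness is not needed. Your weighted quadratic $V$, obtained either by diagonalisation or from the Lyapunov equation, is the natural repair. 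Routing the conclusion through Borkar's Lyapunov-function corollary, rather than through Theorem \ref{Thm: SA_SLLN} as literally stated, is the right way to use it, and it yields a statement valid for every Hurwitz $\mathbf{A}$.

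One side claim in your last paragraph is wrong: for the specific matrix in \eqref{Eq: matrix_A}, the symmetric part \emph{is} negative definite. Write $t=\E[\tau]/\mu\in(0,1)$, which holds since $\E[\tau],\E[\sigma]\ge 1$, and $c=2p-1\in[-1,1]$. The symmetric part then has diagonal entries $ct-1<0$ and $-1$, and determinant $1-ct-\tfrac14c^2(1-t)^2$. This determinant is concave in $c$. At $c=1$ it equals $\tfrac14(1-t)(3+t)>0$, and at $c=-1$ it is positive because $(1-t)^2/4<\tfrac14$. Hence it is positive on all of $[-1,1]$. The paper's other application is the scalar case $h(\theta)=2(p-1)\theta$, where v. is immediate. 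So both applications satisfy condition v. in its literal Euclidean form. The defect you identified affects only the general formulation of the corollary, not the paper's results.
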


\begin{remark} We would remark here that the recursion for the SA which one can find in equation (1.1) in \cite{Zh_2016}, is written as 
\[
\theta_{n+1}=\theta_{n}+\frac{1}{n+1}\left(-h(\theta_n)+ \Delta M_{n+1}+ r_{n+1}\right).
\]
That is \cite{Zh_2016} writes the mean field function as $-h$, whereas we write it as $h$. Thus, we make the necessary modifications in stating the next few theorems that can be found either explicitly or implicitly in \cite{Zh_2016}. 
\end{remark}
The following theorem is the version Central Limit theorem using SA techniques and is available as Theorem 1.1 in \cite{Zh_2016}.

\begin{theorem}
\label{Thm: CLT_SA_eigenvalue_more_than_half}
 Let $\theta^*$ be an equilibrium point for the mean field function $h$, that is, $\theta^* \in \{\theta: \, h(\theta)=0\}$, where $h$ is as in \eqref{Eq: recursion_general_SA}. Let $\theta_n$, $\Delta M_n$, and $r_n$ be as in \eqref{Eq: recursion_general_SA}, and $\theta_n \longrightarrow \theta^*, \text{ a.s.}$ as $n \to \infty$. Let $\delta>0$ be such that,
\begin{equation}\label{eq: SA CLT cond1 rho>1/2}
    \sup_{n \ge 1}\E \left[\lvert \lvert \Delta M_{n+1}\rvert \rvert ^{2+\delta} | \ \FF_{n}\right]< \infty, \text{ a.s.}
\end{equation}
Furthermore, as $n \to \infty$
\begin{equation}\label{eq: SA CLT cond2 rho>1/2}
\E \left[ \Delta M_n \left( \Delta M_n \right)^\top | \ \FF_{n}\right] \longrightarrow \Gamma, \text{ a.s.}
\end{equation}
where $\Gamma$ is a deterministic symmetric positive semi-definite matrix and for any $\epsilon>0$, 
\begin{equation}
\label{Eq: error_for_second_eigenvalue_leq_1/2}
(n+1)\E\left[\lvert \lvert r_{n+1} \rvert \rvert ^2 \mathbf{1}_{\{\lvert \lvert \theta_n - \theta^*\rvert \rvert \le \epsilon\}} \right] \longrightarrow 0.
\end{equation}
Let us assume that the eigenvalues of $D(-h(\theta^*))$ are all real and positive. Let $\rho^{*}$ be the smallest eigenvalue. If $\rho^{*} >1/2$, then as $n \to \infty$ 
\begin{equation}
\label{Eq: CLT_second_eigenvalue_leq_1/2}
\sqrt{n}\left(\theta_n - \theta^*\right)\stackrel{d}{\longrightarrow} N(0, \Sigma),
\end{equation}
where the variance-covariance matrix $\Sigma$ is given by 
\begin{equation}
\label{Eq: variance_covariance_matrix_eigenvalue_more_than_half}
\Sigma = \int_{0}^{\infty}{\left(e^{-(D(-h(\theta^*)-\frac{1}{2}I_d) u}\right)^\top \Gamma e^{-(D(-h(\theta^*)-\frac{1}{2}I_d) u} \mathrm{d}u},
\end{equation}
where $I_d$ denotes the $d$-dimensional identity matrix.
\end{theorem}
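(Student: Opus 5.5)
The statement is the stochastic-approximation CLT quoted from \cite{Zh_2016} (Theorem 1.1 there). I would prove it by linearising around $\theta^*$ and applying a martingale CLT to a weighted sum of the martingale differences. Write $H = D(-h(\theta^*))$ and $\Delta_n=\theta_n-\theta^*$. Taylor expansion gives $h(\theta_n) = -H\Delta_n + O(\lVert\Delta_n\rVert^2)$, so the recursion \eqref{Eq: recursion_general_SA} becomes
\[
\Delta_{n+1} = \Big(I - \tfrac{H}{n+1}\Big)\Delta_n + \tfrac{1}{n+1}\big(\Delta M_{n+1} + r_{n+1} + O(\lVert \Delta_n\rVert^2)\big).
\]
Iterating, with the products $\Pi_{m,n}=\prod_{j=m+1}^{n}(I-H/j)\approx (m/n)^{H}$, gives
\[
\Delta_n = \Pi_{0,n}\Delta_0 + \sum_{m=1}^n \tfrac{1}{m}\Pi_{m,n}\big(\Delta M_m + r_m + O(\lVert\Delta_{m-1}\rVert^2)\big).
\]
The leading term $\sqrt n\,\Pi_{0,n}\Delta_0 = O(n^{1/2-\rho^*})$ tends to $0$ because $\rho^*>1/2$.

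\textbf{Main term.} The quantity $\sqrt n\sum_m \frac1m \Pi_{m,n}\Delta M_m$ is a martingale triangular array. I would apply the martingale CLT to it.
\begin{itemize}
\item The Lindeberg condition follows from the $2+\delta$ moment bound \eqref{eq: SA CLT cond1 rho>1/2}, since each weight is $O(n^{-1/2})$ uniformly.
\item The conditional covariance is $n\sum_m m^{-2}(m/n)^{H}\Gamma_m (m/n)^{H^\top}$, where $\Gamma_m$ denotes the conditional covariance of $\Delta M_m$. Substituting $m=ne^{-u}$ turns it into a Riemann sum for $\int_0^\infty e^{-(H-\frac12 I)u}\Gamma e^{-(H-\frac12I)^\top u}\,du$.
\item Convergence of this Riemann sum uses \eqref{eq: SA CLT cond2 rho>1/2} together with the Cesàro-type averaging; the integral converges because $H-\frac12 I$ has positive eigenvalues. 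This produces $\Sigma$ in \eqref{Eq: variance_covariance_matrix_eigenvalue_more_than_half}, up to the transpose convention coming from the row-vector notation.
\end{itemize}

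\textbf{Remainder terms.} These must be shown to be $o_P(n^{-1/2})$.
\begin{itemize}
\item For the $r_m$ term, I would localise on the event $\{\lVert\Delta_{m}\rVert\le\epsilon \ \forall m\ge N\}$, whose probability tends to $1$ by the a.s. convergence. Cauchy–Schwarz with \eqref{Eq: error_for_second_eigenvalue_leq_1/2}, i.e. $m\,\E\lVert r_m\rVert^2\to0$, gives $\sqrt n\sum_m m^{-1}(m/n)^{\rho^*}\E\lVert r_m\rVert = o(1)$.
\item The quadratic term is harder: it needs an a priori rate $\lVert\Delta_n\rVert = O_P(n^{-1/2})$, or at least $o(n^{-1/4})$ in a suitable sense.
\end{itemize}

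The main obstacle is that quadratic remainder. I would first derive an $L^2$ bound on $\Delta_n$ on the localised event, using a Lyapunov function $V(\Delta)=\Delta^\top P\Delta$ with $PH+H^\top P$ positive definite. From this I would get the recursion $\E V(\Delta_{n+1})\le(1-\frac{2\rho'}{n})\E V(\Delta_n)+O(n^{-2})$ with some $\rho'>1/2$, which yields $\E V(\Delta_n)=O(1/n)$. In the linear applications of this paper $h$ is affine, so the quadratic term vanishes identically and this step is unnecessary there.
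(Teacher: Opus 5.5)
The paper does not prove this statement. It quotes it from Theorem 1.1 of \cite{Zh_2016}, with the sign convention for $h$ flipped and the eigenvalues restricted to be real; that theorem is the classical stochastic-approximation CLT, and its hypotheses on $\Delta M_n$ and $r_n$ are the ones listed. Your linearisation-plus-martingale-CLT argument is the standard proof of that classical result, so it is a legitimate self-contained replacement for the citation. The citation buys brevity and also covers complex eigenvalues. Your route buys transparency:
\begin{itemize}
\item it shows where $\rho^*>1/2$ is used: decay of $\sqrt n\,\Pi_{0,n}\Delta_0$, finiteness of the integral defining $\Sigma$, and summability of the remainder bounds;
\item it explains the integral form of $\Sigma$ through the substitution $m=ne^{-u}$;
\item since $h$ is affine in every application in the paper, it reduces there to the martingale CLT plus the $r_n$ estimate.
\end{itemize}
You also correctly read the covariance hypothesis as conditioning on the past, $\FF_{n-1}$; as printed, it conditions on $\FF_n$.

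Several steps need tightening, but none is fatal.

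\textbf{Lindeberg weights.} The weights $w_{m,n}=\sqrt n\,m^{-1}\lVert\Pi_{m,n}\rVert\asymp n^{1/2-\rho^*}m^{\rho^*-1}$ are not uniformly $O(n^{-1/2})$ when $\rho^*<1$; at $m=1$ they are of order $n^{1/2-\rho^*}$. The conditional Lyapunov condition instead follows from $\max_m w_{m,n}\to0$ together with $\sum_m w_{m,n}^2=O(1)$.

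\textbf{Regularity of $h$.} The statement only presupposes differentiability of $h$ at $\theta^*$, not an $O(\lVert\Delta\rVert^2)$ remainder. Your a priori bound $\E[V(\Delta_n)\mathbf 1]=O(1/n)$ still suffices. On $\{\lVert\Delta_m\rVert\le\epsilon\}$ the remainder is at most $\eta(\epsilon)\lVert\Delta_m\rVert$ with $\eta(\epsilon)\to0$, so it contributes only $O(\eta(\epsilon))$ to $\sqrt n\sum_m m^{-1}\Pi_{m,n}(\cdot)$.

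\textbf{The Lyapunov step.}
\begin{itemize}
\item You need $PH+H^\top P\succeq 2\rho' P$ for some $1/2<\rho'<\rho^*$, i.e.\ $P$ a Lyapunov matrix for $H-\rho' I$. Positive definiteness of $PH+H^\top P$ alone can give a rate $\le 1/2$.
\item The localising set must be adapted. Use $\{\lVert\Delta_m\rVert\le\epsilon,\ N\le m\le n\}\in\FF_n$, not $\{\lVert\Delta_m\rVert\le\epsilon\ \forall m\ge N\}$.
\item The $(2+\delta)$-moment bound is only an a.s.\ finite random constant. So you must also localise on $\{\E[\lVert\Delta M_{m+1}\rVert^2\mid\FF_m]\le K,\ m\le n\}$ to make the $O(n^{-2})$ term deterministic.
\end{itemize}
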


\begin{remark}
In the statement of Theorem 1.1 in \cite{Zh_2016}, the eigenvalues of $D(-h(\theta^*))$ can be complex with positive real parts. However, since in all our application of the above theorem the eigenvalues are all real, we state the above theorem for real eigenvalues only. 
\end{remark}
Next, we state the version of the Central Limit Theorem, when the second largest eigenvalue $\rho^*= \frac{1}{2}$.
\begin{theorem}
\label{Thm: CLT_SA_for_rho_equals_1/2}
Let $\theta^*$ be an equilibrium point for the mean field function $h$, that is, $\theta^* \in \{\theta: \, h(\theta)=0\}$, where $h$ is as in \eqref{Eq: recursion_general_SA}. Let $\theta_n$, $\Delta M_n$, and $r_n$ be as in \eqref{Eq: recursion_general_SA}, and $\theta_n \longrightarrow \theta^*, \text{ a.s.}$ as $n \to \infty$. 
Further, assume that 
\begin{equation}
\label{Eq: Taylor_expnasion_for_theta}
h(\theta)= \left(\theta -\theta^*\right) Dh(\theta^*)+ o(\lvert \lvert \theta -\theta^*\rvert \rvert^{1+\epsilon}) \text{ as } \theta \rightarrow \theta^*, \text{ for some } \epsilon>0,
\end{equation}
and the following version of the Lindeberg-Feller condition holds
\begin{equation}
\label{Eq: Decay_martingale_difference_rho_half}
\frac{1}{n} \sum_{m=1}^n \E\left[\lvert \lvert \Delta M_m\rvert \rvert ^2 \mathbf{1}_{\{\lvert \lvert \Delta M_m\rvert \rvert \ge \epsilon \sqrt{n}\}}\mid  \FF_{m-1}\right] \longrightarrow 0, \text{ a.s. or in } L_1, \text{ for all } \epsilon>0.
\end{equation}
Let $\Gamma$ be a positive semi-definite matrix, such that,
\begin{equation}
\label{Eq: Gamma_as_limit}
\frac{1}{n} \sum_{m=1}^n \E \left[\Delta M_m \left(\Delta M_m\right)^\top \mid \FF_{m-1}\right]\rightarrow \Gamma, \text { a.s. or in } L_1. 
\end{equation}
Let the maximum eigenvalue $\rho^*=\frac{1}{2}$, and the error $r_n$ satisfy the following 
\begin{equation}
\label{Eq: sum_of_rn}
\sum_{k=1}^n r_k = o\left(\sqrt{\frac{n}{log n}}\right), \text{ a.s.}
\end{equation}
Then 
\begin{equation}
\label{Eq: CLT_rho_half}
\frac{\sqrt{n}}{\sqrt{\log n}} \left(\theta_n -\theta^*\right)\stackrel{d}{\longrightarrow} N\left(0, \widetilde{\Sigma}\right) \text{(stably)},
\end{equation}
where 
\begin{equation}
\label{Eq: Description_tilde_Sigma}
\widetilde{\Sigma}= \lim_{n \to \infty}\frac{1}{\log n}\int_{0}^{\log n} {\left(e^{-(D(-h(\theta^*)-\frac{1}{2}I_d) u}\right)^\top \Gamma e^{-(D(-h(\theta^*)-\frac{1}{2}I_d) u} \mathrm{d}u}
\end{equation}
\end{theorem}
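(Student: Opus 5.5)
The statement at the end is Theorem \ref{Thm: CLT_SA_for_rho_equals_1/2}, the critical ($\rho^*=\frac12$) CLT for stochastic approximation, which the paper quotes from \cite{Zh_2016}. I would prove it by linearising the recursion around $\theta^*$ and reducing it to a weighted martingale sum. Put $\Delta_n=\theta_n-\theta^*$ and $H=-Dh(\theta^*)$, whose smallest eigenvalue is $\rho^*=\frac12$. By \eqref{Eq: Taylor_expnasion_for_theta} the recursion \eqref{Eq: recursion_general_SA} becomes
\[
\Delta_{n+1}=\Delta_n\Bigl(I-\tfrac{H}{n+1}\Bigr)+\frac{1}{n+1}\bigl(\Delta M_{n+1}+r_{n+1}+o(\lVert\Delta_n\rVert^{1+\epsilon})\bigr).
\]
Iterating with the products $\Pi_{m,n}=\prod_{j=m+1}^{n}(I-H/j)$, which behave like $(m/n)^{H}$ up to bounded factors, gives the representation
\[
\Delta_n=\Delta_{n_0}\Pi_{n_0,n}+\sum_{m=n_0+1}^{n}\frac1m\bigl(\Delta M_m+r_m+o(\lVert\Delta_{m-1}\rVert^{1+\epsilon})\bigr)\Pi_{m,n}.
\]
I would treat the three contributions separately, multiplied by $\sqrt{n/\log n}$.

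\textbf{Martingale part.} Write $\frac1m\Pi_{m,n}\approx \frac{1}{\sqrt{mn}}(m/n)^{H-\frac12 I}$. The sum $\sqrt{n/\log n}\sum_m \frac1m\Delta M_m\Pi_{m,n}$ is then a martingale array with conditional covariance
\[
\frac{1}{\log n}\sum_{m\le n}\frac1m\,\bigl((m/n)^{H-\frac12 I}\bigr)^\top\,\E[\Delta M_m\Delta M_m^\top\mid\FF_{m-1}]\,(m/n)^{H-\frac12 I}.
\]
Substituting $u=\log(n/m)$ and using \eqref{Eq: Gamma_as_limit} (via summation by parts, since only Cesàro convergence of the conditional covariances is assumed), this tends to $\widetilde\Sigma$ in \eqref{Eq: Description_tilde_Sigma}. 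Eigendirections with eigenvalue $\frac12$ contribute a constant integrand, hence the $\log n$ growth; those with eigenvalue $>\frac12$ have integrable integrands and vanish after division by $\log n$. The conditional Lindeberg condition follows from \eqref{Eq: Decay_martingale_difference_rho_half}, because the weights are at most $(mn)^{-1/2}(\log n)^{-1/2}\cdot$const. The martingale CLT (in its stable form) then gives convergence to $N(0,\widetilde\Sigma)$.

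\textbf{Error and initial terms.} For the error term, apply Abel summation to $\sum_m r_m\frac1m\Pi_{m,n}$, writing $R_m=\sum_{k\le m} r_k$. The increments of the weights are $O(m^{-2}(m/n)^{1/2}\cdot$polylog$)$, so \eqref{Eq: sum_of_rn} makes this term $o(\sqrt{\log n/n})$. The initial term is $O(n^{-1/2}\log^{d}n)$ at worst, which is negligible unless Jordan blocks occur; I would assume $H$ is diagonalisable, or absorb the logarithmic factors.

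\textbf{Main obstacle.} The hardest step is the nonlinear remainder $o(\lVert\Delta_m\rVert^{1+\epsilon})$, because controlling it needs an a priori rate for $\Delta_m$, and that rate in turn comes from the same recursion. I would first get a crude bound $\lVert\Delta_m\rVert=O(m^{-1/2}\log m)$ a.s. by a localisation or stopping argument on $\{\lVert\Delta_m\rVert\le\epsilon\}$, combining the martingale LIL/Burkholder bounds with the bound on $R_m$. With that rate the remainder sum is $O(n^{-1/2}\sum m^{-1-\epsilon/2}\cdot$polylog$)=o(\sqrt{\log n/n})$. Combining the three parts proves \eqref{Eq: CLT_rho_half}.
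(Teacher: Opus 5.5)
The paper does not prove this statement. It imports Theorem~2.1 of \cite{Zh_2016}, specialised (see the remark after it) to the case $\nu=1$, i.e.\ the eigenvalue $\frac12$ of $H=-Dh(\theta^*)$ is semisimple. Your outline is the standard self-contained route to that result, and it is sound in outline:
\begin{itemize}
\item linearise and represent $\theta_n-\theta^*$ through the deterministic products $\Pi_{m,n}$;
\item apply a stable martingale-array CLT to the weighted martingale;
\item Abel-sum the errors using \eqref{Eq: sum_of_rn};
\item control the nonlinear remainder by an a priori rate.
\end{itemize}
The covariance limit via summation by parts is correct. So is the error bound $n^{-1/2}\sum_{m\le n}o\bigl(1/(m\sqrt{\log m})\bigr)=o\bigl(\sqrt{\log n/n}\bigr)$, and so is the initial term. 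Your route buys transparency; the citation buys the general Jordan case, where the scaling picks up a further power of $\log n$. Your diagonalisability assumption is not merely a convenience. If $\frac12$ is not semisimple, $\widetilde\Sigma$ in \eqref{Eq: Description_tilde_Sigma} in general diverges, so semisimplicity of $\frac12$ is implicit in the statement. Jordan blocks at eigenvalues $>\frac12$ are harmless.

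Two steps need repair or more detail.

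\textbf{Lindeberg condition.} After multiplying by $\sqrt{n/\log n}$, the weight on $\Delta M_m$ is of order $(m\log n)^{-1/2}$, not $(mn)^{-1/2}(\log n)^{-1/2}$. So the array's Lindeberg condition truncates at $\epsilon\sqrt{m\log n}$, far below $\epsilon\sqrt n$ for small $m$, and \eqref{Eq: Decay_martingale_difference_rho_half} does not apply term by term. It does follow by grouping $m\in(2^{j-1},2^j]$. On that block the threshold exceeds $(\epsilon/\sqrt2)\,2^{j/2}$ and $1/m\le 2^{1-j}$. So block $j$ contributes at most a constant times $(\log n)^{-1}$ times the quantity in \eqref{Eq: Decay_martingale_difference_rho_half} at $n=2^j$ (with $\epsilon/\sqrt2$). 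Averaging these $O(\log n)$ null terms against $1/\log n$ gives $0$.

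\textbf{A priori rate.} Do not absorb $\psi(\theta_m):=h(\theta_m)-(\theta_m-\theta^*)Dh(\theta^*)$ into a random perturbation of $H$ inside the weights, because that destroys the martingale structure. Instead:
\begin{itemize}
\item Keep $\Pi_{m,n}=\Pi_{n_0,n}\Pi_{n_0,m}^{-1}$ deterministic, for a fixed $n_0>\lVert H\rVert$. The martingale part is then a genuine martingale times $\Pi_{n_0,n}$.
\item That martingale part is $O\bigl(n^{-1/2}(\log n)^{1/2+\kappa}\bigr)$ a.s.\ by Kronecker's lemma. No LIL is available under these Ces\`aro-type hypotheses, and none is needed. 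The error part is already $o(\sqrt{\log n/n})$ by your Abel argument.
\item Once $\lVert\Delta_m\rVert$ is small, use $\lVert\psi(\theta_m)\rVert\le\delta\lVert\Delta_m\rVert$ and a discrete Gronwall inequality for $c_n=\sqrt n\,\lVert\Delta_n\rVert$. This gives $\lVert\Delta_n\rVert=O\bigl(n^{-1/2+C\delta}\,\mathrm{polylog}\,n\bigr)$.
\end{itemize}
Take $\delta$ small compared with the exponent $\epsilon$ in \eqref{Eq: Taylor_expnasion_for_theta}. Then this crude rate already gives $\sum_m m^{-1/2}\lVert\psi(\theta_m)\rVert<\infty$, which is all the nonlinear term requires.
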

\begin{remark}A more general version of the above theorem is available in Theorem 2.1 in \cite{Zh_2016} when $Dh(\theta^*)$ is not necessarily diagonalisable. However, since in our case $Dh(\theta^*)$ is diagonalisable, the version of the corresponding Central limit theorem in our case is simplified. {If $Dh(\theta^*)$ is not necessarily diagonalisable, for example, when the eigenvalues are repeated, then the idea is to represent $Dh(\theta^*)$ in its Jordan canonical form.} In our case, the quantity $\nu=1$ since the eigenvalues of $Dh(\theta^*)$ are all distinct, real and non-zero, where $\nu$ is as defined in Theorem 2.1 in \cite{Zh_2016}. In this case, all eigenvalues are real but not distinct, $\nu$ is the algebraic multiplicity of the eigenvalue $\rho^*$. For a more detailed description of $\widetilde{\Sigma}$ in the general case, we refer the reader to Theorem 2.1 in \cite{Zh_2016}.
\end{remark}
In the case, when the second largest eigenvalue $\rho^*< \frac{1}{2}$, it can be shown that with suitable scaling which is of the order $n^{\rho^*}$ the fluctuations of $\theta_n$ converge almost surely to a suitable random variable. The following theorem is available as Theorem 2.2 in \cite{Zh_2016}.

\begin{theorem}
\label{Thm: CLT_SA_for_rho_leq_1/2} Let $\theta^*$ be an equilibrium point for the mean field function $h$, that is, $\theta^* \in \{\theta: \, h(\theta)=0\}$, where $h$ is as in \eqref{Eq: recursion_general_SA}. Let $\theta_n$, $\Delta M_n$, and $r_n$ be as in \eqref{Eq: recursion_general_SA}, and $\theta_n \longrightarrow \theta^*, \text{ a.s.}$ as $n \to \infty$. 
Further, assume that 
\begin{equation}
\label{Eq: Taylor_expnasion_for_theta_1}
{h(\theta)= \left(\theta -\theta^*\right) Dh(\theta^*)+ o(\lvert \lvert \theta -\theta^*\rvert \rvert^{1+\epsilon}) \text{ as } \theta \rightarrow \theta^*, \text{ for some } \epsilon>0.}
\end{equation}
Let $0< \rho^* < 1/2$. Furthermore, assume that 
\begin{equation}
\label{Eq:_second_moment_martingale_difference_rho_leq_half}
\sum_{m=1}^n \E \left[\Delta M_m \left(\Delta M_m\right)^\top \mid \FF_{m-1}\right]= \OO(n), \text{ a.s. or in } L_1,
\end{equation}
and 
\begin{equation}
\label{Eq: error_rate_rho_leq_half}
\sum_{k=1}^n r_k= o\left(n^{1-\rho^*-\delta_0}\right)
\text{ a.s. for some } \delta_0>0.
\end{equation}
Then there exists a random variable $L$, such that, 
\begin{equation}
\label{Eq: CLT_rho_leq_half}
n^{\rho^*}\left(\theta_n-\theta^*\right)\rightarrow L, \text{ a.s.}
\end{equation}
\end{theorem}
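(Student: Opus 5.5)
The plan is the standard linearisation and variation-of-constants argument behind Theorem 2.2 of \cite{Zh_2016}, which we adapt to our sign convention. Write $\eta_n:=\theta_n-\theta^*$ and $H:=-Dh(\theta^*)$; its eigenvalues are real and positive, with smallest one $\rho^*<\frac12$. We assume $H$ is diagonalisable, as in all our applications.

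\textbf{Step 1: linearise and solve.} By \eqref{Eq: Taylor_expnasion_for_theta_1} the recursion \eqref{Eq: recursion_general_SA} becomes
\[
\eta_{n+1}=\eta_n\Big(I-\tfrac{H}{n+1}\Big)+\tfrac{1}{n+1}\big(\Delta M_{n+1}+r_{n+1}+q_{n+1}\big),
\qquad q_{n+1}=o\big(\lVert\eta_n\rVert^{1+\epsilon}\big).
\]
Set $\Pi_{m,n}:=\prod_{j=m+1}^{n}(I-H/j)$. Iterating gives
\[
\eta_n=\eta_{n_0}\Pi_{n_0,n}+\sum_{m=n_0+1}^{n}\tfrac1m\big(\Delta M_m+r_m+q_m\big)\Pi_{m,n}.
\]
Since $H$ is diagonalisable, we project onto each eigendirection with eigenvalue $\lambda$. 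In that direction $\Pi_{m,n}$ acts as the scalar $\prod_{j=m+1}^n(1-\lambda/j)=c_\lambda(m,n)\,(m/n)^{\lambda}$, where $c_\lambda(m,n)\to$ a finite nonzero limit uniformly for $m$ large. Hence, along that direction, $n^{\rho^*}\eta_n$ is essentially
\[
n^{\rho^*-\lambda}\sum_{m\le n}m^{\lambda-1}\big(\Delta M_m+r_m+q_m\big).
\]

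\textbf{Step 2: the martingale and error terms.} For $\lambda=\rho^*$, the martingale $\sum_m m^{\rho^*-1}\Delta M_m$ has conditional quadratic variation $\sum_m m^{2\rho^*-2}\E[\Delta M_m\Delta M_m^\top\mid\FF_{m-1}]$. Summation by parts together with \eqref{Eq:_second_moment_martingale_difference_rho_leq_half} bounds this by $\sum_m m^{2\rho^*-2}=O(1)$, since $2\rho^*-1<0$. So this martingale converges a.s. For $\lambda>\rho^*$, the same bound combined with a martingale SLLN (Kronecker's lemma) gives $\sum_{m\le n}m^{\lambda-1}\Delta M_m=O\big(n^{\lambda-1/2}\log n\big)$ when $\lambda\ge\frac12$, and a convergent sum when $\lambda<\frac12$. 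After multiplying by $n^{\rho^*-\lambda}$, both cases tend to $0$. For the error term, Abel summation with $R_n=\sum_{k\le n}r_k=o(n^{1-\rho^*-\delta_0})$ from \eqref{Eq: error_rate_rho_leq_half} gives
\[
\sum_m m^{\lambda-1}r_m=R_nn^{\lambda-1}+O\Big(\sum_m |R_m|\,m^{\lambda-2}\Big).
\]
For $\lambda=\rho^*$ this converges. For $\lambda>\rho^*$ it is $o(n^{\lambda-\rho^*})$, which suffices. The initial term $\eta_{n_0}\Pi_{n_0,n}$ is $O(n^{-\lambda})$, so $n^{\rho^*}$ times it is bounded and converges along the $\rho^*$ direction.

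\textbf{Step 3 (main obstacle): the nonlinear remainder $q_m$.} A priori we only know $\eta_n\to0$. The plan is to bootstrap. First run Steps 1--2 with $q_m$ absorbed using $\lVert q_m\rVert\le\varepsilon\lVert\eta_{m-1}\rVert$ for $m$ large, which is a perturbation of $H$ by an arbitrarily small amount. This yields $\lVert\eta_n\rVert=O(n^{-\rho^*+\varepsilon'})$ a.s. for every $\varepsilon'>0$, via a Gronwall-type comparison for the scalar sequence $\sup_{k\le n}k^{\rho^*-\varepsilon'}\lVert\eta_k\rVert$. Then $\lVert q_m\rVert=o\big(m^{-(\rho^*-\varepsilon')(1+\epsilon)}\big)$. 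Choosing $\varepsilon'$ small enough that $(\rho^*-\varepsilon')(1+\epsilon)>\rho^*$ makes $\sum_m m^{\rho^*-1}q_m$ absolutely convergent, and for $\lambda>\rho^*$ it is $o(n^{\lambda-\rho^*})$. Getting this a priori polynomial rate rigorously is the delicate point. If a direct Gronwall bound is awkward, we would instead use a stopping-time localisation on $\{\lVert\eta_n\rVert\le\epsilon\}$, which has eventual probability one because $\theta_n\to\theta^*$ a.s.

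\textbf{Step 4: conclusion.} Combining Steps 1--3, every component of $n^{\rho^*}\eta_n$ with $\lambda>\rho^*$ tends to $0$ a.s. The $\rho^*$-component converges a.s. to
\[
c\Big(\eta_{n_0}\text{-term}+\sum_m m^{\rho^*-1}\big(\Delta M_m+r_m+q_m\big)\Big),
\]
where $c$ is the limit of $c_{\rho^*}(m,n)$ (handled uniformly in $m$). Transforming back from the eigenbasis gives a random vector $L$ with $n^{\rho^*}(\theta_n-\theta^*)\to L$ a.s., which is \eqref{Eq: CLT_rho_leq_half}.
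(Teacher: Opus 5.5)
Your argument is correct. Note that the paper gives no proof of this statement: it imports it as Theorem 2.2 of \cite{Zh_2016}. That result covers a general, possibly non-diagonalisable, $Dh(\theta^*)$, and there the normalisation picks up a logarithmic factor determined by the Jordan structure of $\rho^*$, as the paper's remark after the statement indicates.

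You instead give a self-contained, elementary proof in the diagonalisable, real-spectrum case. Its ingredients are:
\begin{itemize}
\item exact solution of the scalar recursions in the eigenbasis;
\item a.s.\ convergence of $\sum_m m^{\rho^*-1}\Delta M_m$ from the $\OO(n)$ bracket, via Abel summation;
\item Kronecker's lemma for the faster eigendirections;
\item Abel summation for $r_m$ against $\sum_{k\le n}r_k=o(n^{1-\rho^*-\delta_0})$;
\item a bootstrap rate that makes the nonlinear remainder summable.
\end{itemize}
The citation buys generality. Your route buys transparency, and it shows that semisimplicity of $\rho^*$ is what the statement as written (with no logarithmic correction) needs. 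So your diagonalisability assumption costs nothing relative to the statement. It also holds in both places the paper uses the result: the scalar $h(\theta)=2(p-1)\theta$, and $-A$ with distinct eigenvalues $1$ and $\rho^*$.

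Two bookkeeping points should be tightened.

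First, in Step 4 there is no single constant $c$ to pull out. Write $\prod_{j=m+1}^{n}(1-\lambda/j)=a_n/a_m$ with $a_n=\Gamma(n+1-\lambda)/\Gamma(n+1)\sim n^{-\lambda}$. Then the $n\to\infty$ limit of $c_\lambda(m,n)$ is $1/(m^{\lambda}a_m)$, which depends on $m$. Along the $\rho^*$-direction the clean form is $(n^{\rho^*}a_n)\bigl(\text{initial term}+\sum_m w_m(\Delta M_m+r_m+q_m)\bigr)$, with $w_m=(m a_m)^{-1}$ and $n^{\rho^*}a_n\to1$. Since $r_m$ is controlled only through its partial sums, do the Abel summation with these exact weights, which satisfy $w_{m+1}-w_m=\OO(m^{\lambda-2})$. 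Do not swap $w_m$ for $m^{\lambda-1}$ term by term against $r_m$.

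Second, in Step 3 the a priori rate follows without a sup-Gronwall argument or any localisation. Subtract the solution $v_n$ of the linear recursion driven by $\Delta M+r$ alone; by Step 2 it is $\OO(n^{-\rho^*})$. In eigen-coordinates with the $\ell^1$ norm, the difference $u_n$ satisfies, for large $n$,
\[
\lVert u_{n+1}\rVert_1\le\Bigl(1-\frac{\rho^*-C\varepsilon}{n+1}\Bigr)\lVert u_n\rVert_1+\frac{C\varepsilon}{n+1}\lVert v_n\rVert_1 .
\]
This gives $\lVert\theta_n-\theta^*\rVert=\OO(n^{-\rho^*+C\varepsilon})$ directly.
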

\begin{remark} 
\label{Rem: discussion_rho*_leq_half}
Again as in the previous cases, the version of the theorem that we see as Theorem 2.2 in \cite{Zh_2016} is written for a much more general case, when the matrix $Dh(\theta^*)$ is not necessarily diagonalisable. In this general case, the scaling and also the limiting random variable will depend on the algebraic multiplicity of the second largest eigenvalue of $Dh(\theta^*)$, when all the eigenvalues are real. 
\end{remark}

\section*{Appendix B}\label{Appendix B}

This section contains the calculations for $\Sigma$ and $\widetilde{\Sigma}$ from Proposition \ref{prop: limiting variance}.
We first compute the quantity $e^{\left((A+\frac{1}{2} I)u\right)^\top} \Gamma e^{(A+\frac{1}{2} I )u}$.
\ 

\noindent Let $B \coloneqq A+\frac12 I= \begin{pmatrix}
\alpha & 0\\
\beta & -\frac12
\end{pmatrix}$,
where
\[
\alpha=(2p-1)\frac{k}{\mu}-\frac12,
\qquad
\beta=(2p-1)\frac{\mu-k}{\mu}.
\]
Since $B$ is lower triangular, we get
\[
B^n=
\begin{pmatrix}
\alpha^n & 0  \\ 
\beta\sum_{j=0}^{n-1}\alpha^{\,n-1-j} \left(-\frac12\right)^j & \left(-\frac12\right)^n
\end{pmatrix} 
= 
\begin{pmatrix}
\alpha^n & 0\\
\beta \frac{\alpha^n-\left(-\frac12\right)^n}
{\alpha+\frac12} & \left(-\frac12\right)^n
\end{pmatrix}.
\]
Then, 
\begin{align*}
e^{Bu}
&= \sum_{n=0}^{\infty} \frac{u^n}{n!}B^n \\
&= \begin{pmatrix}
\sum_{n=0}^{\infty}\frac{(\alpha u)^n}{n!} & 0 \\
\frac{\beta}{\alpha+\frac12}\sum_{n=1}^{\infty}\frac{u^n}{n!} \left(\alpha^n-\left(-\frac{1}{2}\right)^n\right)
& \sum_{n=0}^{\infty}\frac{(-u/2)^n}{n!}
\end{pmatrix}. \\
&= \begin{pmatrix}
e^{\alpha u} & 0 \\
\beta \dfrac{e^{\alpha u}-e^{-u/2}}{\alpha+\frac{1}{2}} & e^{-u/2}
\end{pmatrix}.
\end{align*}
We get,
\[ e^{B^\top u} \Gamma e^{Bu} = \begin{pmatrix}
    ke^{2\alpha u} + (\mu-k)f(u)^2 & (\mu-k)f(u) e^{-u/2} \\
    (\mu-k)f(u)e^{-u/2} & (\mu-k)e^{-u}
\end{pmatrix},\]
where $f(u) = \beta \dfrac{e^{\alpha u}-e^{-u/2}}{\alpha + 1/2} = \frac{\mu - k}{k}(e^{\alpha u}-e^{-u/2})$. Therefore,
\begin{align*} 
e^{B^\top u} \Gamma e^{Bu} &= (1-\eta^2) \begin{pmatrix}
    ke^{2\alpha u} + \frac{(\mu-k)^3}{k^2}(e^{\alpha u}-e^{-u/2})^2 & \frac{(\mu-k)^2}{k} (e^{\alpha u}-e^{-u/2}) e^{-u/2} \\
    \frac{(\mu-k)^2}{k}(e^{\alpha u}-e^{-u/2})e^{-u/2} & (\mu-k)e^{-u}
\end{pmatrix} \\
& =
(1-\eta^2)\begin{pmatrix}
\left( k+\frac{(\mu-k)^3}{k^2}
\right)e^{2\alpha u} -\frac{2(\mu-k)^3}{k^2} e^{(\alpha-\frac{1}{2})u} +\frac{(\mu-k)^3}{k^2}e^{-u}
&
\frac{(\mu-k)^2}{k} \left(e^{(\alpha-\frac{1}{2})u} -e^{-u}
\right) \\
\frac{(\mu-k)^2}{k}
\left(e^{(\alpha-\frac{1}{2})u}
-e^{-u} \right)
&
(\mu-k)e^{-u}
\end{pmatrix}.
\end{align*}
\begin{enumerate}[I.]
    \item When $(2p-1)\frac{k}{\mu} < \frac{1}{2}$, we want to compute 
    \[ \int_0^\infty \left[ e^{\left((A+\frac{1}{2} I)u\right)^\top} \Gamma e^{\left((A+\frac{1}{2} I )u \right)} \right] du. \]
With this condition, we have
\begin{align*}
\int_0^\infty e^{2\alpha u}\,du &= -\frac{1}{2\alpha} = \frac{1}{1-2(2p-1)\frac{k}{\mu}},\\
\int_0^\infty e^{(\alpha-\frac12)u}\,du &= -\frac{1}{\alpha-\frac12} = \frac{1}{1-(2p-1)\frac{k}{\mu}},\\
\int_0^\infty e^{-u}\,du &=1.
\end{align*}
Thus, 
\begin{align*}
\sigma_{11} &= (1-\eta^2)\left[
\frac{k+\dfrac{(\mu-k)^3}{k^2}}{1-2(2p-1)\dfrac{k}{\mu}} - \frac{2(\mu-k)^3}{k^2\left(1-(2p-1)\dfrac{k}{\mu}\right)}+\frac{(\mu-k)^3}{k^2}
\right],\\
\sigma_{12} &= (1-\eta^2) \frac{(\mu-k)^2}{k} \left(\frac{1}{1-(2p-1)\dfrac{k}{\mu}} -1\right),\\
\sigma_{22} &= (1-\eta^2)(\mu-k). 
\end{align*}
Thus, 
\[\Sigma = \sigma_{11} + 2\sigma_{12}+\sigma_{22} = (1-\eta^2)\left(\frac{k+ \dfrac{(\mu-k)^3}{k^2}}{1-2(2p-1)\dfrac{k}{\mu}} + \frac{\frac{2(\mu-k)^2}{k}-\frac{2(\mu-k)^3}{k^2}}{\left(1-(2p-1)\dfrac{k}{\mu}\right)}+\frac{(\mu-k)^3}{k^2} - \frac{2(\mu-k)^2}{k} + \mu-k \right).\]
\item When $(2p-1)\frac{k}{\mu} = \frac{1}{2}$, we want to compute 
    \[ \lim_{t \to \infty} \frac{1}{\log t} \int_0^{\log t} \left[ e^{\left((A+\frac{1}{2} I)u\right)^\top} \Gamma e^{\left((A+\frac{1}{2} I )u \right)} \right] du. \]
    Since $\alpha = 0$ in this case, we have
\begin{equation*}
    e^{B^\top u} \Gamma e^{Bu} =
(1-\eta^2)\begin{pmatrix}
 k+\frac{(\mu-k)^3}{k^2} -\frac{2(\mu-k)^3}{k^2} e^{-\frac{1}{2}u} +\frac{(\mu-k)^3}{k^2}e^{-u}
&
\frac{(\mu-k)^2}{k} \left(e^{-\frac{1}{2}u} -e^{-u}
\right) \\
\frac{(\mu-k)^2}{k}
\left(e^{-\frac{1}{2}u}
-e^{-u} \right)
&
(\mu-k)e^{-u}
\end{pmatrix}.
\end{equation*}
Observe that \[\lim_{t\rightarrow\infty} \frac{1}{\log t} \int_0^{\log t}  e^{-u}du = \lim_{t\rightarrow\infty} \frac{1-\frac{1}{t}}{\log t} = 0. \]
Thus, $\sigma_{11} = k+\frac{(\mu-k)^3}{k^2} $ and $\sigma_{12} = \sigma_{21} = \sigma_{22} = 0$ and hence $\widetilde{\Sigma} = k+\frac{(\mu-k)^3}{k^2}$.
\end{enumerate}
\newpage
\section*{Appendix C}\label{Appendix C}
In this section, we include the plots obtained by simulations in the case where the size of $D_n$ is trivial. Recall that the CLT-type results in Theorem \ref{Thm: CLT_D_n/n_goes_to_zero} are for  $|D_n| = o(\sqrt{n})$. The following plots show the behaviour of TMERW when $|D_n|$ is between $\sqrt{n}$ and $n$. We plot histograms and the respective q-q plots for 1000 realisations of the walk for $p=0.35 < \frac{3}{4}$ and $|D_n| = \lfloor n ^\gamma\rfloor$ for two different values of $\gamma$.
\begin{figure}[h!]
    \centering
        \begin{minipage}{0.47\textwidth}
        \centering
        \includegraphics[width=\textwidth]{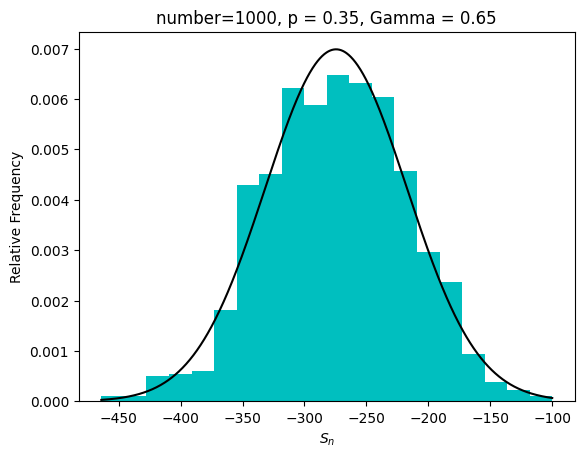}
        \caption{$|D_n| =\OO(n^{0.65})$}
        \label{fig:img2}
    \end{minipage}
    \begin{minipage}{0.47\textwidth}
        \centering
\includegraphics[width=\textwidth]{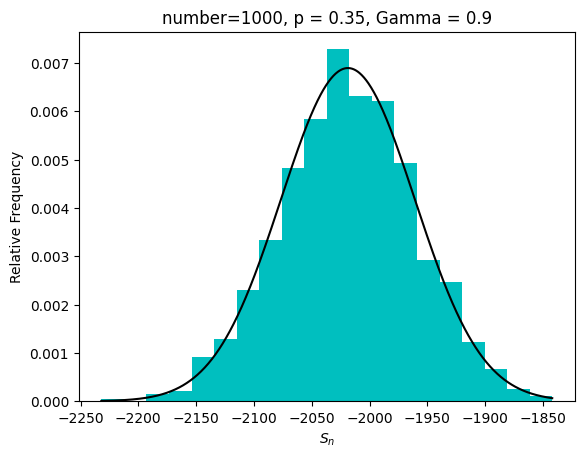}
        \caption{$|D_n| =\OO(n^{0.9})$}
        \label{fig:img1}
    \end{minipage}
    \hfill 

\end{figure}
\begin{figure}[h!]
    \centering
     \begin{minipage}{0.47\textwidth}
        \centering
        \includegraphics[width=\textwidth]{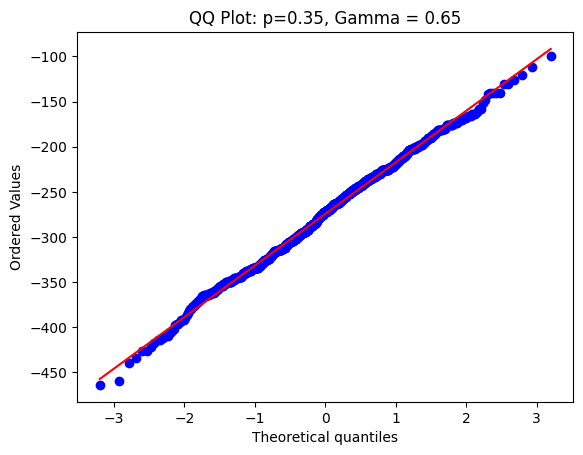}
        \caption{$|D_n| =\OO(n^{0.65})$}
        \label{fig:img4}
    \end{minipage}
    \begin{minipage}{0.47\textwidth}
        \centering
\includegraphics[width=\textwidth]{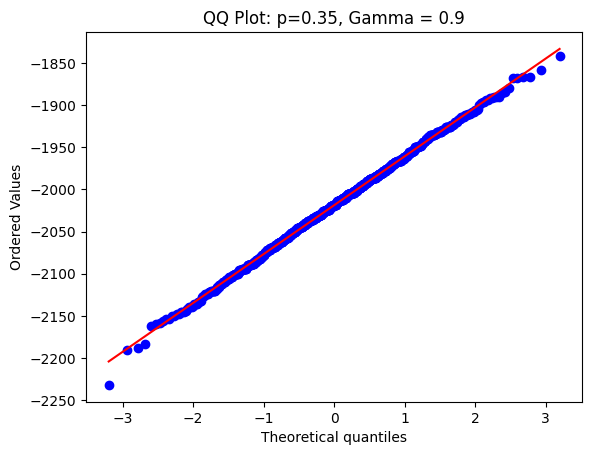}
        \caption{$|D_n| =\OO(n^{0.9})$}
        \label{fig:img3}
    \end{minipage}
    \hfill 
   
\end{figure}

\bibliographystyle{plain}
\bibliography{ref}
\end{document}